\documentclass[11pt,reqno,a4paper,oneside]{amsart}


\usepackage{latexsym}
\usepackage{amsmath}
\usepackage{amsthm}
\usepackage{amssymb}
\usepackage{amsfonts}
\usepackage{comment}

\usepackage{mathrsfs}

\newcommand{\mR}{\mathbb{R}}                    
\newcommand{\abs}[1]{\lvert #1 \rvert}          
\newcommand{\norm}[1]{\lVert #1 \rVert}         

\newcommand{\re}{\mathrm{Re}}
\newcommand{\im}{\mathrm{Im}}
\newcommand{\supp}{\mathrm{supp}}

\newcommand{\mH}{\mathscr{H}}

\newcommand{\mDp}{\mathscr{D}'}

\newcommand{\id}{\mathrm{Id}}
\newcommand{\eps}{\varepsilon}

\newcommand{\closure}[1]{\overline{#1}}

\linespread{1.02}

\theoremstyle{definition}
\newtheorem{thm}{Theorem}[section]

\newtheorem{prop}[thm]{Proposition}
\newtheorem{cor}[thm]{Corollary}
\newtheorem{lemma}[thm]{Lemma}
\newtheorem*{definition}{Definition}

\numberwithin{equation}{section}

\title[THE ATTENUATED RAY TRANSFROM]{The attenuated ray transform on simple surfaces}

\author{Mikko Salo}
\address{Department of Mathematics and Statistics, University of Helsinki}
\email{mikko.salo@helsinki.fi}

\author{Gunther Uhlmann}
\address{Department of Mathematics, University of Washington}
\email{gunther@math.washington.edu}



\begin{document}

\begin{abstract}
We show that the attenuated geodesic ray transform on two dimensional simple surfaces is injective. Moreover we give a stability estimate and develop a reconstruction procedure.
\end{abstract}

\maketitle

\section{Introduction}

The geodesic ray transform, that is, the integration of a function along geodesics, arises as the linearization of the problem of determining a conformal factor of a Riemannian metric on a compact Riemannian manifold with boundary from the boundary distance function. This is the boundary rigidity problem, see \cite{SU} for a recent review. The standard X-ray transform, where one integrates a function along straight lines, corresponds to the case of the Euclidean metric and is the basis of medical imaging techniques such as CT and PET. The case of integration along more general geodesics arises in geophysical imaging in determining the inner structure of the Earth since the speed of elastic waves generally increases
with depth, thus curving the rays back to the Earth surface. It also arises in ultrasound imaging. Uniqueness and stability for the case of integration along geodesics on simple manifolds (see precise definition below) was shown by Mukhometov \cite{Mu} in the two dimensional case. Explicit inversion formulas in the two dimensional case were given in \cite{pestovuhlmann_imrn} for the case of constant curvature, and in the general case Fredholm type inversion formulas were given.

In this paper we consider the case of the {\sl attenuated} geodesic ray transform in two dimensions that we proceed to define.

Let $(M,g)$ be a compact 2D Riemannian manifold with boundary. The geodesics going from $\partial M$ into $M$ can be parametrized by the set $\partial_+ S(M) = \{(x,\xi) \in TM \,;\, x \in \partial M, \abs{\xi} = 1, \langle \xi,\nu \rangle \leq 0 \}$ where $\nu$ is the outer unit normal vector to $\partial M$. For any $(x,\xi) \in \partial_+ S(M)$ we let $t \mapsto \gamma(t,x,\xi)$ be the geodesic starting from $x$ in direction $\xi$. We assume that $(M,g)$ is nontrapping, which means that the time $\tau(x,\xi)$ when the geodesic $\gamma(t,x,\xi)$ exits $M$ is finite for each $(x,\xi) \in \partial_+ S(M)$.

If $a \in C^{\infty}(M)$ is the attenuation coefficient, consider the attenuated ray transform of a function $f \in C^{\infty}(M)$, 
\begin{equation*}
I^a f(x,\xi) = \int_0^{\tau(x,\xi)} f(\gamma(t,x,\xi)) \,\text{exp} \Big[ \int_0^{t} a(\gamma(s,x,\xi)) \,ds \Big] \,dt.
\end{equation*}
Here $(x,\xi) \in \partial_+ S(M)$. 

\newpage

A compact Riemannian manifold with boundary is said to be {\sl simple} if given
any two points in the boundary there is a unique minimizing geodesic joining the two points, and if the boundary is strictly convex. The notion of simplicity arises naturally in the context of the boundary rigidity problem \cite{Mi}.

Our first result shows that the attenuated ray transform on simple surfaces is injective for any attenuation coefficient.

\begin{thm} \label{thm:injectivity_functions}
Let $(M,g)$ be a simple 2D manifold, and let $a$ be any smooth complex function on $M$. If $f$ is a smooth complex function on $M$ such that $I^a f \equiv 0$, then $f \equiv 0$.
\end{thm}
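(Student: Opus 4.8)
The plan is to translate the condition $I^a f \equiv 0$ into a boundary value problem for a transport equation on the unit sphere bundle $SM$, remove the attenuation by means of a fiberwise holomorphic integrating factor, and then appeal to the established theory of the unattenuated geodesic ray transform on simple surfaces. Let $X$ denote the geodesic vector field on $SM$, and let $u$ be the unique solution of $Xu + au = -f$ with $u = 0$ on the outgoing boundary $\partial_- SM$. Integrating this ODE along each geodesic, with the integrating factor $\exp[\int_0^t a(\gamma(s))\,ds]$, shows that the value of $u$ on the incoming boundary $\partial_+ S(M)$ is exactly $I^a f$. Hence the hypothesis $I^a f \equiv 0$ is equivalent to $u|_{\partial SM} = 0$. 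Since $f$ is a function on $M$, it is fiberwise constant, i.e.\ it carries only the zeroth Fourier mode in the angular variable; the goal becomes to show that the mere existence of a solution $u$ of this transport equation vanishing on all of $\partial SM$ forces $f \equiv 0$.

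The second step is the removal of the attenuation. The central tool is the existence, on a simple surface, of a smooth fiberwise \emph{holomorphic} integrating factor, that is, a function $w \in C^\infty(SM)$ whose Fourier modes in the angular variable are supported in degrees $\geq 0$ and which satisfies $Xw = a$; there is likewise an antiholomorphic counterpart. Granting this, set $v = e^w u$. A direct computation gives $Xv = e^w[(Xw - a)u - f] = -e^w f$, while $v|_{\partial SM} = 0$. Because $e^w$ is holomorphic and $f$ is of degree $0$, the right-hand side $e^w f$ is fiberwise holomorphic. Thus the attenuated problem has been converted into an unattenuated transport equation $Xv = -e^w f$ with vanishing boundary data and a holomorphic source.

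The third step is to exploit this structure mode by mode. Expanding in the angular Fourier decomposition and writing $X = \eta_+ + \eta_-$ for the raising and lowering operators, the negative Fourier modes of the identity $Xv = -e^w f$ are homogeneous, and the unattenuated injectivity and structure results of Pestov--Uhlmann on simple surfaces constrain the admissible sources. Carrying out the same reduction with the antiholomorphic integrating factor yields an antiholomorphic source and controls the positive modes; combining the two conjugations eliminates all nonzero Fourier modes and pins down the degree-zero part, forcing $f \equiv 0$.

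The step I expect to be the main obstacle is the construction of the holomorphic integrating factor itself: solving $Xw = a$ within the class of fiberwise holomorphic functions is impossible for a general vector field, and on simple surfaces it rests on the surjectivity of the adjoint ray transforms, which in turn follows from the injectivity and the stability estimate of Pestov--Uhlmann together with the commutator identity between the geodesic flow and the fiberwise Hilbert transform $H$. Controlling the negative Fourier modes so that $w$ is genuinely holomorphic, and then matching the holomorphic and antiholomorphic reductions to cleanly cancel all modes, is the delicate point on which the whole argument turns.
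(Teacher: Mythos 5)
Your proposal follows essentially the same route as the paper's proof: convert $I^a f \equiv 0$ into the transport problem $(\mH + a)u = -f$ in $SM$, $u|_{\partial S(M)} = 0$; conjugate by a fiberwise holomorphic integrating factor (and, separately, an antiholomorphic one) to obtain an unattenuated equation with holomorphic (resp.\ antiholomorphic) right-hand side; use the commutator formula for $H$ and $\mH$ together with injectivity of the unattenuated transform on functions plus solenoidal $1$-forms to conclude that the conjugated solutions are holomorphic (resp.\ antiholomorphic); deduce that $u = u_0$, hence $du_0 = 0$, $u_0 = 0$, and $f = 0$. You also correctly single out the existence of the holomorphic integrating factor as the crux. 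The paper's proof of that step (its Proposition \ref{prop:holomorphic_integratingfactor}) characterizes which $a$ admit such factors and reduces the matter to surjectivity of $Sh = (\mH_{\perp} h_{\psi})_0$, proved by a Fr\'echet-space duality argument: the adjoint $S^* f = -\frac{1}{2\pi} I^0(*df(\xi))$ is injective and has weak* closed range (via ellipticity of the normal operator of $I^0$ on $1$-forms), so $S$ is surjective by Treves' theorem. This is in the spirit of, though more delicate than, your ``injectivity plus stability'' sketch.

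There is, however, one genuine gap: the theorem allows $f \in C^{\infty}(M)$ with support reaching $\partial M$, and in that case your argument breaks down at the outset. The fiberwise Hilbert transform and the commutator formula are applied to $u$ (equivalently to $v = e^w u$), and this requires $u \in C^{\infty}(SM)$. For general $f$ not vanishing near the boundary, $u$ is merely continuous on $SM$ and smooth only on $SM \smallsetminus S(\partial M)$, because the exit time $\tau$ fails to be smooth at the glancing region; so the mode-by-mode analysis cannot be applied directly. The paper devotes the entire second half of its proof to exactly this point: it rewrites $I^a f$ as a weighted transform $I_{\rho} f$ with $\rho = e^{-u^a_-}$, extends everything to a slightly larger simple manifold $(\tilde{M}, g)$ with $f$ extended by zero, and invokes the ellipticity of $\tilde{I}_{\rho}^* \tilde{I}_{\rho}$ (a pseudodifferential operator of order $-1$) so that $\tilde{I}_{\rho} f \equiv 0$ forces the zero-extension of $f$ to be smooth, i.e.\ $f \in C^{\infty}_c(\tilde{M}^{\text{int}})$, after which the core argument applies on $\tilde{M}$. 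Without this (or an equivalent) regularization step, your proof establishes the theorem only for $f$ compactly supported in $M^{\text{int}}$.
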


Moreover we will give stability estimates and a reconstruction procedure to recover $f$ from its attenuated ray transform $I^a f$.

In the case where $M = \mR^2$ with the Euclidean metric, the corresponding injectivity result for the attenuated X-ray transform has been proved by different methods in Arbuzov, A.~L.~Bukhgeim and Kazantsev \cite{abk}, Novikov \cite{novikov}, Natterer \cite{natterer}, and Boman and Str\"omberg \cite{bomanstromberg}. These methods also come with inversion formulas. If $M$ is the unit disc in $\mR^2$ with Euclidean metric, a direct inversion formula was given by Kazantsev and A.~A.~Bukhgeim \cite{kb}. See Finch \cite{finch} and Kuchment \cite{kuchment_survey} for surveys of these and other developments in Euclidean space. The Euclidean attenuated X-ray transform is the basis of the medical imaging modality SPECT.

The attenuated geodesic ray transform arises in inverse transport problems with attenuation \cite{Ma}, \cite{Ma_survey}, when the index of refraction is anisotropic and represented by a Riemannian metric. It also arises in geophysics where there is attenuation of the elastic waves. Rather unexpectedly, this transform also appeared in the recent works \cite{DKSaU}, \cite{KSaU} in the context of Calder\'on's inverse conductivity problem in anisotropic media.

Although the attenuated ray transform is well understood in Euclidean space, much less is known about this transform on manifolds. Bal \cite{bal} proves injectivity and gives an inversion formula in the hyperbolic disc $\mathbb{H}^2$. Frigyik, Stefanov and Uhlmann \cite{FSU} prove injectivity when $(M,g)$ is simple and $g$ and $a$ are real analytic, or close to real analytic. Sharafutdinov proves injectivity of the attenuated ray transform on manifolds with a condition involving a modified Jacobi equation in \cite{Sh}, and the size and curvature of the manifold in \cite{Sh2}. Dos Santos Ferreira, Kenig, Salo, and Uhlmann \cite{DKSaU} prove the analog of Theorem \ref{thm:injectivity_functions} on any simple manifold if $\norm{a}_{L^{\infty}(M)}$ is small. A similar result, with a slightly different smallness condition, also follows from the general stability theory of \cite{FSU}. In these last results, the smallness condition arises since the methods involve a perturbation about the unattenuated case where $a = 0$.

The inversion results for $\mR^2$ and $\mathbb{H}^2$, which assume no smallness condition for $a$, are based on complex analysis and holomorphic functions. We will give a geometric version of these complex analysis arguments for simple surfaces, thus establishing injectivity of the ray transform for arbitrary attenuation coefficients. One of the key tools will be the commutator formula for the geodesic vector field and angular Hilbert transform, established in \cite{pestovuhlmann} in the study of the boundary rigidity problem in two dimensions.

At this point, let us give some other results which follow from the methods presented here. 
The next theorem considers the attenuated ray transform for combinations of functions and $1$-forms. If $F(x,\xi) = f(x) + \alpha(\xi)$ for some smooth function $f$ and $1$-form $\alpha$, where $\alpha(\xi) = \alpha_j(x) \xi^j$, the attenuated ray transform of $F$ is defined by 
\begin{equation*}
I^a F(x,\xi) = \int_0^{\tau(x,\xi)} F(\gamma(t,x,\xi),\dot{\gamma}(t,x,\xi)) \,\text{exp} \Big[ \int_0^{t} a(\gamma(s,x,\xi)) \,ds \Big] \,dt
\end{equation*}
where $(x,\xi) \in \partial_+ S(M)$. It is easy to see that this transform has nontrivial kernel since $I^a(ap + dp(\xi)) = 0$ for any $p \in C^{\infty}(M)$ with $p|_{\partial M} = 0$. The injectivity result, which also extends the corresponding result for functions, states that these are the only elements in the kernel.

\begin{thm} \label{thm:injectivity_oneforms}
Let $(M,g)$ be a simple 2D manifold and let $a \in C^{\infty}(M)$ be a complex function. Suppose that $f$ is a smooth function and $\alpha$ is a smooth $1$-form on $M$, and let $F(x,\xi) = f(x) + \alpha_j(x) \xi^j$. If $I^a F \equiv 0$, then $F = ap + dp(\xi)$ for some function $p \in C^{\infty}(M)$ with $p|_{\partial M} = 0$.
\end{thm}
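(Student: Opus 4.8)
The plan is to reuse, essentially verbatim, the transport-equation and holomorphic-integrating-factor scheme behind Theorem~\ref{thm:injectivity_functions}; the only genuinely new feature is the pair of angular Fourier modes carried by the $1$-form $\alpha$. First I would pass from $I^a F$ to a transport equation on the unit circle bundle $SM$. Writing $X$ for the geodesic vector field and setting $u(x,\xi)=\int_0^{\tau(x,\xi)}F(\gamma,\dot{\gamma})\exp\bigl[\int_0^t a\,ds\bigr]\,dt$, differentiation along the geodesic flow shows that $Xu+au=-F$ on $SM$ with $u$ vanishing on the outgoing part of $\partial(SM)$; the hypothesis $I^a F\equiv 0$ then upgrades this to $u=0$ on all of $\partial(SM)$. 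Decomposing in the fiber variable, $u=\sum_k u_k$ with $Vu_k=iku_k$ ($V$ the vertical field, $\Omega_k$ its $k$-th eigenspace), and writing $X=\eta_++\eta_-$ with $\eta_\pm\colon\Omega_k\to\Omega_{k\pm1}$, one notes that $F=f+\alpha_j\xi^j$ lives only in the modes $-1,0,1$. The desired conclusion $F=ap+dp(\xi)=(X+a)p$ is therefore \emph{equivalent} to the single assertion that $u$ is fiberwise constant, $u=u_0\in\Omega_0$: in that case $p:=-u_0$ satisfies $F=-(X+a)u_0=a(-u_0)+d(-u_0)(\xi)$ and $p|_{\partial M}=0$.

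The heart of the matter is thus to prove $u\in\Omega_0$. Here I would invoke the holomorphic and antiholomorphic integrating factors constructed for the function case: a function $w\in C^\infty(SM)$ with $Xw=a$ whose Fourier modes are all $\geq 0$, together with an antiholomorphic counterpart $\tilde{w}$. Setting $v:=e^w u$ converts the attenuated equation into the unattenuated one $Xv=-e^w F$, with $v|_{\partial(SM)}=0$, since $X(e^w u)=e^w(au+Xu)=-e^w F$. Because $e^w$ has modes $\geq 0$ while $F$ has modes $\geq -1$, the product $e^w F$ has no Fourier mode below $-1$. The key lemma underlying Theorem~\ref{thm:injectivity_functions} — proven from the Pestov energy identity together with the Pestov--Uhlmann commutator formula relating the fiberwise Hilbert transform $H$, the field $X$, and $X_\perp=[X,V]$ \cite{pestovuhlmann} — propagates this lower bound from $Xv$ to $v$, yielding that $v$ too has no Fourier mode below $-1$.

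Now the step absent from the function case appears. In Theorem~\ref{thm:injectivity_functions} the right-hand side $e^w F$ is genuinely holomorphic, so one immediately gets $v$ holomorphic; here the mode $-1$ of $F$ produces a possibly nonzero extreme mode $v_{-1}$ that must be eliminated. Reading off the $(-2)$-nd Fourier mode of $Xv=-e^w F$, and using that $v$ and $e^w F$ have no modes below $-1$, gives $\eta_- v_{-1}=0$, while $v_{-1}|_{\partial M}=0$. A function $h\in\Omega_{-1}$ with $\eta_- h=0$ that vanishes on $\partial M$ must vanish on a simple surface, so $v_{-1}=0$ and $v$ is holomorphic. Since $e^{-w}$ again has modes $\geq 0$, $u=e^{-w}v$ is holomorphic. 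Running the same argument through the antiholomorphic factor $\tilde{w}$ (with extreme mode $\tilde{v}_1\in\Omega_1$, $\eta_+\tilde{v}_1=0$) shows that $u$ has only modes $\leq 0$, whence $u=u_0\in\Omega_0$ and $p=-u_0$ completes the proof.

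The main obstacle, and the only place the argument exceeds that of Theorem~\ref{thm:injectivity_functions}, is this extreme-mode vanishing: showing that the pieces $v_{-1}\in\Omega_{-1}$ and $\tilde{v}_1\in\Omega_1$ forced in by the $1$-form part of $F$ are genuinely zero. This amounts to the fact that $\eta_-$ on $\Omega_{-1}$, respectively $\eta_+$ on $\Omega_1$, has trivial kernel among sections vanishing on $\partial M$, and it is exactly here that simplicity enters a second time — through the absence of conjugate points and the disc-type topology that render such a fiberwise holomorphic section trivial — beyond its role in constructing the integrating factors.
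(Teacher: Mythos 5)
Your overall strategy---integrating factors from Proposition \ref{prop:holomorphic_integratingfactor}, holomorphicity of the transport solution, fiberwise constancy, $p=-u_0$---is the paper's strategy recast in Fourier-mode language, and your extreme-mode claim is correct: if $h\in\Omega_{-1}$ satisfies $\eta_-h=0$ and $h|_{\partial M}=0$, then $\mH h=\eta_+h\in\Omega_0$ is holomorphic, so Proposition \ref{prop:holomorphicsolution_functions} already gives that $h$ is holomorphic with $h_0=0$, i.e.\ $h=0$ (no appeal to conjugate points is needed). The genuine gap is the step you lean on hardest: the claim that ``the key lemma underlying Theorem \ref{thm:injectivity_functions}'' propagates the bound ``no Fourier modes below $-1$'' from $\mH v$ to $v$. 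That lemma (Proposition \ref{prop:holomorphicsolution_functions}) propagates \emph{holomorphicity}: it requires the right-hand side to have modes $\geq 0$, whereas $e^wF$ has a nonzero mode $-1$ precisely because of the $1$-form part---this is exactly the new difficulty of the $1$-form case, not something the function-case lemma covers. Nor is there a soft reduction: subtracting $r\in\Omega_0$ with $\eta_-r=(e^wF)_{-1}$ destroys the boundary condition $v|_{\partial S(M)}=0$, and applying the function-case lemma to the conjugate of the negative-mode part of $v$ only returns information you already possess. The missing statement is what the paper proves as Proposition \ref{prop:holomorphicsolution_oneforms}, and its proof needs new (if parallel) work: one shows that $(\id-iH)((\rho-\rho_0)F)$ depends only on $x$ by multiplying Fourier series, writes $(\id-iH)F$ invariantly via a Hodge decomposition $\alpha=dp+*dq$, and applies injectivity of the unattenuated transform on functions plus $1$-forms \cite{An} to the resulting first-order polynomial $\hat F$. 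That argument in fact yields the stronger conclusion that $v$ is holomorphic, with no mode $-1$ at all, so your extreme-mode step becomes superfluous. (Minor attribution point: Proposition \ref{prop:holomorphicsolution_functions} is proved from the commutator formula \eqref{commutator_formula} together with \cite{An}, not from a Pestov energy identity.)

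A second omission: you run the argument for arbitrary smooth $f$ and $\alpha$, but the transport solution $u$ is smooth on $SM$ only when $F$ is compactly supported in $M^{\text{int}}$; when the support reaches $\partial M$, $u$ inherits the non-smoothness of the exit time $\tau$ at the glancing set $S(\partial M)$, and neither the commutator formula nor the fiberwise Fourier analysis can be applied to it. The paper therefore proves the compactly supported case first and reduces the general case to it: pass to the weighted transform $I_\rho$ with $\rho=e^{-u^a_-}$, use the solenoidal decomposition $\alpha=\alpha^s+dp$, extend everything to a slightly larger simple manifold, and invoke the ellipticity of $\tilde{I}_\rho^*\tilde{I}_\rho$ from \cite{HS} to conclude that $f-ap$ and $\alpha^s$ are smooth and compactly supported; this reduction is also where the potential in the final statement $F=ap+dp(\xi)$ is assembled in the general case. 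Your proposal would need both this reduction and a proof of the propagation lemma to be complete.
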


Note in particular that if $f = 0$ and $a$ is nonvanishing, then any $1$-form is uniquely determined by its attenuated ray transform. Results of this type were given in the unit disc in $\mR^2$ in \cite{kb}, for simple manifolds with $\norm{a}_{L^{\infty}}$ small in \cite{DKSaU}, and for simple manifolds with $g$ and $a$ close to real analytic in \cite{HS}. For inversion formulas in $\mR^2$ see also \cite{bal_fullpartial}, \cite{natterer_vectorial}.

Once injectivity of $I^a$ is known, the general principle that the normal operator $N^a = (I^a)^* I^a$ is an elliptic pseudodifferential operator and the arguments in \cite{FSU}, \cite{HS} yield a stability result. To state this result properly, we use the solenoidal decomposition of a smooth $1$-form $\alpha$ in $M$, 
\begin{equation*}
\alpha = \alpha^s + dp,
\end{equation*}
where $\alpha^s$ is solenoidal (meaning that $\delta \alpha^s = 0$), and $p|_{\partial M} = 0$. Here $\delta$ is the codifferential. This decomposition is uniquely determined by taking $p = G(\delta \alpha)$ where $G$ is the inverse of the Dirichlet Laplacian on $M$. We also choose a simple manifold $(M_1,g)$ which is slightly larger than $(M,g)$ and extend smooth functions and $1$-forms in $M$ by zero to $M_1$. In this way $N^a$ can be viewed as a pseudodifferential operator acting on functions in $M_1$. The stability result is as follows.

\begin{thm} \label{thm:stability}
Let $(M,g)$ be a simple 2D manifold and let $a \in C^{\infty}(M)$ be a complex function. Suppose that $f$ is a smooth function and $\alpha$ is a smooth $1$-form in $M$. Then 
\begin{equation*}
\norm{f-aG(\delta \alpha)}_{L^2(M)} + \norm{\alpha^s}_{L^2(M)} \leq C \norm{N^a (f + \alpha_j \xi^j)}_{H^1(M_1)}.
\end{equation*}
\end{thm}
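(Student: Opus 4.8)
The plan is to follow the microlocal scheme of Stefanov--Uhlmann and Frigyik--Stefanov--Uhlmann \cite{FSU}, \cite{HS}: combine ellipticity of the normal operator $N^a$ with the injectivity already proved in Theorem~\ref{thm:injectivity_oneforms}, and then remove the resulting compact error by a functional-analytic argument.

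First I would pass to a canonical representative. Writing $\alpha = \alpha^s + dp$ with $p = G(\delta\alpha)$ and setting $F^s := (f - aG(\delta\alpha)) + \alpha^s_j \xi^j$, the difference $F - F^s = ap + dp(\xi)$ lies in the kernel of $I^a$ by Theorem~\ref{thm:injectivity_oneforms}, so $I^a F = I^a F^s$ and $N^a F = N^a F^s$. By fiberwise orthogonality of the constant and degree-one parts, $\norm{F^s}_{L^2(SM)}$ is comparable to $\norm{f - aG(\delta\alpha)}_{L^2(M)} + \norm{\alpha^s}_{L^2(M)}$, so it suffices to prove $\norm{F^s}_{L^2} \le C \norm{N^a F^s}_{H^1(M_1)}$.

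Next I would show that $N^a = (I^a)^* I^a$ is an elliptic pseudodifferential operator of order $-1$ on $M_1$, acting on the coupled function/$1$-form system. The exponential weight $\exp[\int_0^t a\,ds]$ is smooth and equals $1$ at $t=0$, so it perturbs the Schwartz kernel of $N^a$ only away from its leading diagonal singularity; hence the principal symbol of $N^a$ agrees with that of the unattenuated operator $N^0$, which is a positive multiple of $\abs{\xi}^{-1}$ times the orthogonal projection onto solenoidal fields. Restricted to solenoidal representatives this block is elliptic, and I would build a parametrix $B$ of order $+1$ with $B N^a = \id + K$, $K$ smoothing, yielding the a priori estimate
\[
\norm{F^s}_{L^2(M)} \le C\norm{N^a F^s}_{H^1(M_1)} + C\norm{F^s}_{H^{-1}(M_1)} .
\]

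Finally I would absorb the lower-order term. If the estimate failed, there would be $F^s_k$ with $\norm{F^s_k}_{L^2}=1$ and $\norm{N^a F^s_k}_{H^1}\to 0$; the compact embedding $L^2 \hookrightarrow H^{-1}$ together with the a priori estimate shows that $(F^s_k)$ is Cauchy in $L^2$, converging to some $F^s$ with $\norm{F^s}_{L^2}=1$ and $N^a F^s = 0$. Elliptic regularity via $B$ makes $F^s$ smooth, and $\ip{N^a F^s}{F^s} = \norm{I^a F^s}^2 = 0$ gives $I^a F^s \equiv 0$; Theorem~\ref{thm:injectivity_oneforms} then forces $F^s$ to be a gauge term, which vanishes in the solenoidal representation, contradicting $\norm{F^s}_{L^2}=1$. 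The main obstacle is the ellipticity step: computing the principal symbol of the matrix operator $N^a$ on the coupled function/$1$-form system and verifying its invertibility on the solenoidal block, together with the bookkeeping (extension by zero to $M_1$, support and boundary regularity) needed to make the parametrix estimate deliver exactly the $L^2$--$H^1$ norms in the statement.
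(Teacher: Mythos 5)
Your overall architecture coincides with the paper's: pass to the gauge-corrected solenoidal representative $F^s = (f - aG(\delta\alpha)) + \alpha^s_j\xi^j$ (so that $N^a F = N^a F^s$), obtain an a priori estimate $\norm{F^s}_{L^2(M)} \le C\norm{N^a F^s}_{H^1(M_1)} + C\norm{F^s}_{H^{-1}(M_1)}$ from ellipticity of the normal operator, and remove the compact error using injectivity. The paper does exactly this, quoting \cite{HS} (Proposition 2) for the a priori estimate and \cite{HS} (Lemma 1) for the absorption step; your compactness/contradiction argument is precisely a proof of that lemma, and your use of Theorem \ref{thm:injectivity_oneforms} together with the solenoidal condition to kill the limiting element matches the paper's reasoning.

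The genuine gap is in your justification of ellipticity. The claim that the attenuation ``perturbs the Schwartz kernel of $N^a$ only away from its leading diagonal singularity,'' so that $\sigma(N^a) = \sigma(N^0)$, is false. The kernel of a weighted normal operator in 2D has the form $W(x,y)/d(x,y)$, and the limit of $W$ at the diagonal along a direction $\theta$ involves $\abs{w(x,\theta)}^2 + \abs{w(x,-\theta)}^2$, where $w$ is the total weight; the weight therefore survives into the principal symbol. For functions alone this costs nothing, since nonvanishing of $w$ still gives ellipticity (this is exactly how \cite{FSU} is invoked in the proof of Theorem \ref{thm:injectivity_functions}). But for the coupled function/$1$-form system it is the heart of the matter: the weight creates off-diagonal function--form entries in the matrix symbol, proportional to $\abs{w(x,\theta)}^2 - \abs{w(x,-\theta)}^2$ at $\theta = \xi_{\perp}/\abs{\xi}$, which cancel when $w \equiv 1$ by the $\theta \mapsto -\theta$ symmetry but not otherwise. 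Hence $\sigma(N^a)$ is \emph{not} ``$\abs{\xi}^{-1}$ times the solenoidal projector,'' and its invertibility on solenoidal pairs is a weight-dependent fact that must be proved. With equal weights on the function and form parts (which is what an attenuation produces) the relevant $2\times 2$ determinant works out to $4\abs{w(x,\xi_{\perp}/\abs{\xi})}^2\,\abs{w(x,-\xi_{\perp}/\abs{\xi})}^2 > 0$, so your conclusion is salvageable, but it does not follow from the reasoning you gave; for general weighted Doppler transforms it can fail. This computation --- together with the identification of the correct gauge, which is why $f - aG(\delta\alpha)$ rather than $f$ appears in the estimate --- is exactly what the paper outsources to Holman--Stefanov: Proposition 1 of \cite{HS} with the modified elliptic condition of Remark 1 there, satisfied because the weight comes from an attenuation.

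A secondary technical point you would also need to fix: the weight in $I^a$, read as a function on the sphere bundle, is $e^{u^a \circ R}$ with $R(x,\xi) = (x,-\xi)$, and $u^a$ is only continuous on $SM$ (it fails to be smooth at directions tangent to $\partial M$, which lie inside $M_1^{\text{int}}$), so $(I^a)^*I^a$ is not directly a pseudodifferential operator with smooth symbol. The paper's remedy is to factor $I^a f = e^{\frac{1}{2}u^a} I_\rho f$ on $\partial_+ S(M)$, where $\rho = e^{-u^a_-}$ is smooth by Lemma \ref{uf_regularity} (using that $u^a_+$ is constant along the geodesic flow), and to prove the estimate for $\tilde{N} = \tilde{I}_\rho^* \tilde{I}_\rho$ on the enlarged manifold; without this device your smooth-weight premise, and with it the whole parametrix construction, does not get started.
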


Finally, we outline a reconstruction procedure to determine a function $f$ from $I^a f$. For simplicity we will assume that $f$ is compactly supported in $M^{\text{int}}$ and all quantities are real valued (the complex valued case is discussed in Section \ref{sec:reconstruction}). The reconstruction procedure consists of several steps, and we refer to Section \ref{sec:preliminaries} for a more precise explanation of the notations used in the result.

\begin{thm} \label{thm:reconstruction}
Let $(M,g)$ be a simple 2D manifold, and let $a \in C^{\infty}(M)$ be real valued. A real valued function $f \in C^{\infty}_c(M^{\text{int}})$ can be determined from the knowledge of $I^a f$ using the following procedure:

\begin{enumerate}

\item[1.] 
Define a function $d$ on $\partial S(M)$ by 
\begin{equation*}
d(x,\xi) = \left\{ \begin{array}{cl} I^a f(x,\xi), & \quad (x,\xi) \in \partial_+ S(M), \\ 0, & \quad \text{otherwise}. \end{array} \right.
\end{equation*}

\item[2.] 
Find an odd holomorphic function $w$ such that $\mH w = -a$.

\item[3.] 
Let $\beta = (\id-iH)(e^{-w} d)$ on $\partial S(M)$.

\item[4.] 
Let $v = \beta \circ \psi + u^{(I^0)^{-1}(A_-^* \beta)}$ in $SM$, where $A_-^* \beta = \beta - \beta \circ \psi$ on $\partial_+ S(M)$ and $(I^0)^{-1}$ is the inverse of the geodesic ray transform in $(M,g)$, in the sense that 
\begin{equation*}
(I^0)^{-1} I^0 (\phi + \alpha_j \xi^j) = \phi + \alpha_j \xi^j
\end{equation*}
for a smooth function $\phi$ and a solenoidal $1$-form $\alpha$.

\item[5.] 
Define $\hat{m} = \frac{1}{2} \re[(\id-iH)(e^w v)]$ and $\hat{u} = \hat{m} - \hat{m}_0$.

\item[6.] 
Define $q = (d-\hat{u})_0 \circ \psi + (u^{(\mH \hat{u} + a \hat{u})_-})_0$, and let $u = q + \hat{u}$.

\item[7.] 
Let $f = -(\mH u + au)_0$.

\end{enumerate}

\end{thm}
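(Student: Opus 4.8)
The statement describes an algorithm, so proving it means checking that each step is well defined and that Step 7 returns $f$; the plan is to anchor the whole verification to the attenuated transport equation. Let $X$ be the geodesic vector field on $SM$ and let $u$ solve $Xu + au = -f$ with $u|_{\partial_- S(M)} = 0$. Integrating along geodesics gives $u|_{\partial_+ S(M)} = I^a f$, so the function $d$ of Step 1 is exactly the trace of $u$ extended by zero to $\partial S(M)$. Writing $C^\infty(SM) = \bigoplus_k \Omega_k$ for the fiberwise Fourier decomposition into angular modes, the source $f = f(x)$ lies in $\Omega_0$, so the problem reduces to reconstructing $u$ in the interior: once $u$ is known, $f$ is read off in Step 7 via $f = -(\mH u + au)_0$, an identity I would derive from the transport equation together with the commutator formula for $X$ and $\mH$ from \cite{pestovuhlmann}.

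The engine of the reconstruction is the holomorphic integrating factor of Step 2. The plan is first to establish that for arbitrary smooth $a$ there is an odd holomorphic $w$ solving $\mH w = -a$, a solvability result on the simple surface that carries no smallness restriction and is the same device underlying the injectivity proof of Theorem \ref{thm:injectivity_functions}. Because $w$ is holomorphic, the weights $e^{\pm w}$ are themselves fiberwise holomorphic, and I would show that pairing them with the holomorphic projections $\id \mp iH$ converts the attenuated transport into the \emph{unattenuated} one: concretely, that the $v$ produced in Step 4 satisfies a geodesic transport equation with no zeroth-order term, so that the known inversion $(I^0)^{-1}$ of \cite{pestovuhlmann_imrn} applies. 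This is the geometric analogue of the complex-analytic trick of Novikov \cite{novikov} and of Boman--Str\"omberg \cite{bomanstromberg}.

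With the attenuation removed, Steps 3--6 reconstruct $v$ and then $u$. I would check that $\beta = (\id - iH)(e^{-w}d)$ is the correct holomorphic boundary trace and that $v = \beta\circ\psi + u^{(I^0)^{-1}(A_-^* \beta)}$ is its extension into $SM$: the term $\beta\circ\psi$ propagates the trace along geodesics through the scattering relation $\psi$, while $A_-^*\beta = \beta - \beta\circ\psi$ isolates precisely the quantity seen by the geodesic ray transform, so that applying $(I^0)^{-1}$ and re-solving the transport equation supplies the interior piece. Steps 5--6 then undo the integrating factor: $\hat{m} = \tfrac{1}{2}\re[(\id - iH)(e^{w}v)]$ recovers the real holomorphic part of $u$ (here the reality of $f$ and $a$ is used), and the zeroth-mode corrections $\hat{u} = \hat{m} - \hat{m}_0$ and $u = q + \hat{u}$ restore the fiber-average of $u$ from the data $d$ and the already-reconstructed $\hat{u}$.

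The main obstacle I expect is Step 2 together with the conjugation it enables: one must produce the odd holomorphic $w$ with $\mH w = -a$ for \emph{every} smooth $a$, with no smallness condition, and then show that conjugation by $e^{\pm w}$, combined with the projections $\id \mp iH$, reduces the attenuated geodesic transport \emph{exactly} to the unattenuated one, introducing no spurious angular modes. Controlling this interplay between the weights, the projections, and $X$ rests entirely on careful bookkeeping with the Pestov--Uhlmann commutator formula, and it is precisely here that the simplicity of $(M,g)$ is indispensable. Once this reduction is secured, the remaining steps are verifications built on the mapping properties of the Hilbert transform on $\partial S(M)$ and on the established inversion $(I^0)^{-1}$.
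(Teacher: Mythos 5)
Your proposal is correct and takes essentially the same route as the paper's proof: reduce to the transport equation $(\mH+a)u=-f$, use the unconditionally solvable odd holomorphic integrating factor together with the Pestov--Uhlmann commutator formula to show that the projection $v=(\id-iH)(e^{-w}u)$ solves an unattenuated transport equation whose source is a function plus a solenoidal $1$-form (hence recoverable from the boundary data $\beta$ via $(I^0)^{-1}$, identifying $v$ with the Step 4 formula), then use reality to determine $u$ up to its fiber average, recover $q=u_0$ from the odd part of the equation, and read off $f$ by averaging. The only discrepancies are cosmetic (e.g.\ Step 7 requires only fiberwise averaging of the transport equation, not the commutator formula, and $\beta$ is the antiholomorphic rather than holomorphic projection of the data), not a different method.
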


There are two nontrivial steps (Steps 2 and 4) in the above result: they require to find a holomorphic integrating factor $w$ to the transport equation $(\mH + a)u = 0$, and to invert the geodesic ray transform $I^0$ with zero attenuation. Both these steps can be achieved in an explicit way if $(M,g)$ has constant curvature, or if $(M,g)$ is a small perturbation of a constant curvature manifold (see Section \ref{sec:reconstruction}). However, it is not clear how to carry out these steps explicitly in a general simple 2D manifold.

It seems that even when $M$ is a domain in $\mR^2$ with Euclidean metric, the reconstruction procedure does not reduce to a simple formula such as in \cite{kb}, \cite{novikov}. It would be interesting to give a reconstruction procedure which would reduce to such a simple formula on constant curvature manifolds.

The structure of the paper is as follows. Section \ref{sec:preliminaries} establishes notation and preliminaries related to geodesic flow, Hilbert transform, and functions which are holomorphic in the angular variable. In Section \ref{sec:strategy} we explain the strategy of the injectivity proof, starting with a simple inversion scheme based on holomorphic solutions of the transport equation, and discussing the modifications to this scheme required in the attenuated case. The first main step in the proof, the construction of holomorphic integrating factors, is achieved in Section \ref{sec:integrating_factors} using pseudodifferential arguments. The second main step consists in proving that solutions of certain transport equations are necessarily holomorphic. This is done in Section \ref{sec:uniqueness}, where also Theorems \ref{thm:injectivity_functions} to \ref{thm:stability} are proved. The final Section \ref{sec:reconstruction} gives a reconstruction procedure and proves Theorem \ref{thm:reconstruction}.

\subsection*{Acknowledgements}
M.S. is partly supported by the Academy of Finland. G.U. is supported in part by NSF and a Walker Family Endowed Professorship.

\section{Preliminaries} \label{sec:preliminaries}

We refer to \cite{pestovuhlmann_imrn}, \cite{pestovuhlmann}, \cite{Sh} for the following facts. Assume that $(M,g)$ is a compact 2D Riemannian manifold with boundary $\partial M$. We will also assume that $(M,g)$ is nontrapping and $\partial M$ is strictly convex (see below). We denote the inner product on tangent vectors and other tensors by $\langle \,\cdot\,, \,\cdot\, \rangle$ and the corresponding norm by $\abs{\,\cdot\,}$.

\subsection{Geodesics}

We will mostly work on the unit sphere bundle given by 
\begin{equation*}
SM = \bigcup_{x \in M} S_x, \qquad S_x = \{ (x,\xi) \in TM \,;\, \abs{\xi} = 1 \}.
\end{equation*}
The manifold $SM$ has boundary $\partial S(M) = \{ (x,\xi) \in SM \,;\, x \in \partial M \}$. The outer unit normal vector of $\partial M$ is denoted by $\nu$, and the sets of inner and outer vectors on $\partial M$ are given by 
\begin{equation*}
\partial_{\pm} S(M) = \{ (x,\xi) \in SM \,;\, x \in \partial M, \ \pm \langle \xi, \nu \rangle \leq 0 \}.
\end{equation*}

If $(x,\xi)$ is a point in $SM$ we denote by $\gamma(t,x,\xi)$ the geodesic on $M$ satisfying $\gamma(0,x,\xi) = x$ and $\dot{\gamma}(0,x,\xi) = \xi$. The geodesic flow is the map 
\begin{equation*}
\varphi_t: SM \to SM, \ \varphi_t(x,\xi) = (\gamma(t,x,\xi), \dot{\gamma}(t,x,\xi))
\end{equation*}
if $t$ is such that the right hand side is well defined. The nonnegative time when a geodesic $\gamma(\,\cdot\,,x,\xi)$ exits $M$ is denoted by $\tau(x,\xi)$. The manifold $(M,g)$ is said to be nontrapping if $\tau(x,\xi)$ is finite for any $(x,\xi) \in SM$. The boundary $\partial M$ is said to be strictly convex if its second fundamental form is positive definite.

Since $(M,g)$ is nontrapping and has strictly convex boundary, the next result holds by \cite[Section 4.1]{Sh}. 

\begin{lemma} \label{tau_regularity}
$\tau$ is continuous in $SM$ and smooth in $SM \smallsetminus S(\partial M)$, and further the function $\tau_-: \partial S(M) \to \mR$ defined by 
\begin{equation*}
\tau_-(x,\xi) = \left\{ \begin{array}{cl} \frac{1}{2} \tau(x,\xi), & (x,\xi) \in \partial_+ S(M), \\ -\frac{1}{2} \tau(x,-\xi), & (x,\xi) \in \partial_- S(M) \end{array} \right.
\end{equation*}
is smooth.
\end{lemma}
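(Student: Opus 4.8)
The plan is to realize $\tau$ and $\tau_-$ as implicitly defined zeros of a boundary defining function composed with the geodesic flow, using the implicit function theorem away from tangential directions and the Morse lemma with parameters across them. Extend $(M,g)$ to a slightly larger open manifold in which all relevant geodesic segments are defined, and fix a smooth boundary defining function $\rho$ with $\rho>0$ in $M^{\mathrm{int}}$, $\rho<0$ outside $M$, and $\partial M=\{\rho=0\}$ with $d\rho\neq 0$ there. Set
\[
\Phi(x,\xi,t)=\rho(\gamma(t,x,\xi)).
\]
The starting observation is the second-order identity $\frac{d^2}{dt^2}\Phi=\mathrm{Hess}\,\rho(\dot\gamma,\dot\gamma)$ along a geodesic (the curvature term vanishes since $\gamma$ is a geodesic); strict convexity of $\partial M$ says precisely that $\mathrm{Hess}\,\rho(\xi,\xi)<0$ whenever $x\in\partial M$ and $\xi$ is tangent to $\partial M$.

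First I would treat the interior. For $(x,\xi)\in SM$ with $x\in M^{\mathrm{int}}$ we have $\Phi(x,\xi,0)=\rho(x)>0$ and $\Phi(x,\xi,\tau)=0$ at the exit time $\tau=\tau(x,\xi)$. If the exit were tangential, i.e. $\partial_t\Phi(x,\xi,\tau)=0$, then by the displayed second-order identity and strict convexity the map $t\mapsto\Phi(x,\xi,t)$ would have a strict local maximum at $t=\tau$, forcing $\Phi<0$ just before $\tau$ and contradicting $\Phi>0$ on $(0,\tau)$. Hence every interior trajectory exits transversally, $\partial_t\Phi(x,\xi,\tau)<0$, and the implicit function theorem makes $\tau$ smooth on $SM\smallsetminus S(\partial M)$.

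Next comes $\tau_-$, which I would identify geometrically. For $(x,\xi)\in\partial S(M)$ the point $t=0$ is always a zero of $\Phi(x,\xi,\cdot)$ since $x\in\partial M$; let $T(x,\xi)$ denote the other boundary crossing of the same geodesic. On $\partial_+ S(M)$ this is the forward exit, $T=\tau(x,\xi)>0$, while on $\partial_- S(M)$ it is the backward one, $T=-\tau(x,-\xi)<0$; comparing with the definition gives $\tau_-=\tfrac12 T$. At a non-tangential boundary point the other crossing is again transversal, by the same convexity argument, so the implicit function theorem yields smoothness of $T$, hence of $\tau_-$, on each of $\partial_\pm S(M)$ away from the glancing set $\{\langle\xi,\nu\rangle=0\}$.

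The main obstacle, and the only delicate point, is smoothness of $\tau_-$ across the glancing set, where the two crossings $t=0$ and $t=T$ collide. At a glancing point $(x_0,\xi_0)$ one has $\Phi=\partial_t\Phi=0$ and $\partial_t^2\Phi=\mathrm{Hess}\,\rho(\xi_0,\xi_0)<0$ at $t=0$, so $\Phi(x_0,\xi_0,\cdot)$ has a nondegenerate maximum there. The plan is to apply the Morse lemma with parameters: near $(x_0,\xi_0,0)$ there is a smooth change of the $t$-variable $s=\psi(x,\xi,t)$ with $\partial_t\psi\neq0$, and a smooth function $m(x,\xi)$, such that $\Phi=m-s^2$. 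The two boundary crossings are then the roots $s=\pm\sqrt{m}$, symmetric about $s=0$; since $s=\psi(x,\xi,0)$ corresponds to the crossing at $x$, the other is $s=-\psi(x,\xi,0)$, so
\[
T(x,\xi)=\psi^{-1}\big(x,\xi,-\psi(x,\xi,0)\big).
\]
As $\psi$ and its inverse in the last slot are smooth, this exhibits $T$, and thus $\tau_-=\tfrac12 T$, as smooth across the glancing set. Finally, continuity of $\tau$ on all of $SM$ follows from these pieces together with the Morse normal form, which shows that $m(x,\xi)\to0$ and both crossing times tend to $0$ as one approaches a glancing direction, so that $\tau\to0=\tau(x_0,\xi_0)$ there. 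Throughout I would lean on \cite[Section 4.1]{Sh}, where this regularity is established.
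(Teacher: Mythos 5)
Your argument is correct, but note that the paper itself does not prove this lemma at all: it is quoted directly from \cite[Section 4.1]{Sh}, so the real comparison is with Sharafutdinov's argument. Both proofs rest on the same two ingredients you isolate, namely the identity $\frac{d^2}{dt^2}(\rho\circ\gamma)=\mathrm{Hess}\,\rho(\dot\gamma,\dot\gamma)$ along geodesics and the fact that strict convexity makes this negative in tangential directions; the difference is how the glancing set is handled. The cited route is more elementary there: for $(x,\xi)\in\partial S(M)$ one always has the root $t=0$ of $\Phi(x,\xi,t)=\rho(\gamma(t,x,\xi))$, so one factors $\Phi(x,\xi,t)=t\,g(x,\xi,t)$ with $g$ smooth (Hadamard's lemma) and observes that the second crossing is the zero set of $g$; at a glancing point $g(x_0,\xi_0,0)=\partial_t\Phi(x_0,\xi_0,0)=0$ while $\partial_t g(x_0,\xi_0,0)=\tfrac{1}{2}\partial_t^2\Phi(x_0,\xi_0,0)<0$, so the ordinary implicit function theorem produces the smooth function $T=2\tau_-$ across the glancing set, with no Morse lemma needed. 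Your replacement --- the parametrized Morse normal form $\Phi=m-s^2$ together with the symmetry trick that the two crossings are the roots $s=\pm\sqrt{m}$, hence the second one is $s=-\psi(x,\xi,0)$ --- is heavier machinery but equally valid, and it buys something the factorization does not give for free: the normal form immediately shows $m\to 0$ and both crossing times collapsing to $0$ near glancing directions, which is exactly the continuity of $\tau$ at $S(\partial M)$, whereas in the factorization approach continuity requires a separate (easy) remark. One small imprecision in your write-up: your transversality-plus-IFT step is phrased only for trajectories starting over $M^{\mathrm{int}}$, but $SM\smallsetminus S(\partial M)$ also contains the non-glancing boundary points $(x,\xi)\in\partial S(M)$ with $\langle\xi,\nu\rangle\neq 0$; the identical argument covers them (for $\partial_-S(M)$ the relevant transversal root is $t=0$ itself), so this is a matter of wording rather than substance.
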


\subsection{Scattering relation}

The scattering relation $\alpha$ maps an inner unit vector $(x,\xi) \in \partial_+ S(M)$ to the outer vector $\varphi_{\tau(x,\xi)}(x,\xi)$. Thus, $\alpha$ takes the starting point on the boundary and direction of a geodesic and gives out the endpoint and direction of that geodesic. It is possible to define $\alpha$ as a smooth map on all of $\partial S(M)$ by 
\begin{equation*}
\alpha(x,\xi) = \varphi_{2\tau_-(x,\xi)}(x,\xi), \quad (x,\xi) \in \partial S(M).
\end{equation*}
Then $\alpha$ is a diffeomorphism $\partial S(M) \to \partial S(M)$ and $\alpha^2 = \id$.

\subsection{Geodesic vector field}

The geodesic vector field $\mH$ is the vector field on $SM$ which acts on smooth functions $u$ on $SM$ by 
\begin{equation*}
\mH u(x,\xi) = \frac{\partial}{\partial t} u(\varphi_t(x,\xi))\Big|_{t=0}.
\end{equation*}
We consider two boundary problems related to $\mH$. If $F$ is a smooth function on $SM$, then the problem 
\begin{equation*}
\mH u = -F \text{ in $SM$}, \quad u|_{\partial_- S(M)} = 0
\end{equation*}
has the solution $u = u^F$ where 
\begin{equation*}
u^F(x,\xi) = \int_0^{\tau(x,\xi)} F(\varphi_t(x,\xi)) \,dt.
\end{equation*}
If $w$ is a smooth function on $\partial_+ S(M)$ then the problem 
\begin{equation*}
\mH u = 0 \text{ in $SM$}, \quad u|_{\partial_+ S(M)} = w
\end{equation*}
has the solution $u = w_{\psi}$ given by 
\begin{equation} \label{w_def}
w_{\psi} = w \circ \alpha \circ \psi
\end{equation}
where $\psi$ is the end point map $\psi(x,\xi) = \varphi_{\tau(x,\xi)}(x,\xi)$, and $\alpha$ is the scattering relation.

Since $\tau$ is continuous on $SM$ and smooth on $SM \smallsetminus S(\partial M)$, the same is true for $u^F$ and $w_{\psi}$. It is a minor inconvenience that these functions are not smooth on $SM$ in general. The space of those $w$ for which $w_{\psi}$ is smooth in $SM$ is denoted by 
\begin{equation*}
C^{\infty}_{\alpha}(\partial_+ S(M)) = \{ w \in C^{\infty}(\partial_+ S(M)) \,;\, w_{\psi} \in C^{\infty}(SM) \}.
\end{equation*}
This space was characterized in \cite{pestovuhlmann} in terms of the operator $A_+$ of even continuation with respect to $\alpha$, acting on $w \in C^{\infty}(\partial_+ S(M))$ by 
\begin{equation*}
A_+ w(x,\xi) = \left\{ \begin{array}{cl} w(x,\xi), & (x,\xi) \in \partial_+ S(M), \\ w(\alpha(x,\xi)), & (x,\xi) \in \partial_- S(M). \end{array} \right.
\end{equation*}

\begin{lemma} \label{wpsi_regularity}
$C^{\infty}_{\alpha}(\partial_+ S(M)) = \{ w \in C^{\infty}(\partial_+ S(M)) \,;\, A_+ w \in C^{\infty}(\partial S(M)) \}$.
\end{lemma}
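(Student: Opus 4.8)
The plan is to establish the two inclusions separately: the inclusion $\subseteq$ is a direct trace computation, while $\supseteq$ requires an analysis of the geodesic flow near the glancing region. For $\subseteq$, suppose $w_\psi \in C^\infty(SM)$ and compute the trace of $w_\psi = w \circ \alpha \circ \psi$ on $\partial S(M)$. If $(x,\xi) \in \partial_+ S(M)$ then $\psi(x,\xi) = \alpha(x,\xi)$, so $\alpha^2 = \id$ gives $w_\psi(x,\xi) = w(x,\xi)$; if $(x,\xi) \in \partial_- S(M)$ then $\tau(x,\xi) = 0$, so $\psi(x,\xi) = (x,\xi)$ and $w_\psi(x,\xi) = w(\alpha(x,\xi))$. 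Thus $w_\psi|_{\partial S(M)} = A_+ w$, and restricting the smooth function $w_\psi$ to the boundary shows $A_+ w \in C^\infty(\partial S(M))$.

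For $\supseteq$, assume $A_+ w \in C^\infty(\partial S(M))$. By Lemma \ref{tau_regularity}, $w_\psi$ is already smooth in $SM \smallsetminus \partial S(M)$ and continuous up to the boundary, and the trace computation above shows $w_\psi = (A_+ w) \circ \psi = (A_+ w) \circ (\alpha \circ \psi)$; these two expressions record the value of $A_+ w$ at the exit and at the entrance point of the geodesic through $(x,\xi)$, and they agree because $A_+ w$ is $\alpha$-invariant. At a boundary point with $\langle \xi, \nu \rangle \neq 0$ the geodesic meets $\partial M$ transversally, so the exit and entrance times depend smoothly on $(x,\xi)$ by the implicit function theorem and $w_\psi$ is smooth up to the boundary there. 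It therefore remains to prove smoothness near the glancing set $\{ (x,\xi) \in \partial S(M) \,;\, \langle \xi, \nu \rangle = 0 \}$.

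Near a glancing point I would resolve the fold singularity of the flow as follows. Embed $M$ in a slightly larger simple manifold $M_1$, so that $\varphi_t$ is smooth near the relevant times, fix a boundary defining function $\rho$ for $\partial M$, and set $\beta_{(x,\xi)}(t) = \rho(\gamma(t,x,\xi))$, which is smooth in $(t,x,\xi)$. The entrance and exit times are the two zeros of $\beta$ near $t = 0$; at a glancing point these coalesce into a double zero, which is nondegenerate because strict convexity gives $\mH^2 \rho < 0$ there. By the Weierstrass/Malgrange preparation theorem $\beta$ factors so that the two times are $t_{\pm} = g(x,\xi) \pm \sqrt{D(x,\xi)}$ with $g, D$ smooth and $D \geq 0$ vanishing exactly at glancing. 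Extending $A_+ w$ to a function $\tilde h \in C^\infty(SM_1)$ and setting $\Phi(x,\xi,\delta) = \tilde h(\varphi_{g(x,\xi)+\delta}(x,\xi))$, which is smooth, the equality of $A_+ w$ at the exit and entrance points gives $w_\psi = \Phi(x,\xi,\sqrt{D}) = \Phi(x,\xi,-\sqrt{D})$. Hence $w_\psi$ equals the even-in-$\delta$ function $\tfrac{1}{2}[\Phi(x,\xi,\delta) + \Phi(x,\xi,-\delta)]$ evaluated at $\delta = \sqrt{D}$; by Whitney's theorem an even smooth function of $\delta$ is a smooth function of $\delta^2$, so $w_\psi$ is a smooth function of $(x,\xi)$ and of $\delta^2 = D$, and therefore smooth in $(x,\xi)$.

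The main obstacle is precisely this glancing analysis: identifying the fold structure of the exit time and exhibiting the square-root branch whose singularity is cancelled by the $\alpha$-invariance (evenness) of $A_+ w$. The transverse and interior regions are routine, and the inclusion $\subseteq$ is immediate.
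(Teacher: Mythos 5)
Your argument is correct, but note that there is nothing in the paper to compare it against: the paper does not prove this lemma at all, it quotes it from \cite{pestovuhlmann}, so your proposal is a self-contained reconstruction of the known proof rather than a match or a departure from an internal one. The content is right: the trace computation giving $w_{\psi}|_{\partial S(M)} = A_+ w$ settles the easy inclusion; the identity $w_{\psi} = (A_+ w) \circ \psi$ together with $\alpha$-invariance of $A_+ w$ reduces the converse to smoothness across the glancing set; and there the fold resolution works as you describe --- strict convexity makes $t \mapsto \rho(\gamma(t,x,\xi))$ have a nondegenerate interior maximum, Malgrange preparation yields the crossing times $t_{\pm} = g \pm \sqrt{D}$ with $g,D$ smooth and $D \geq 0$ on $SM$, and $\alpha$-invariance makes the relevant quantity even in $\sqrt{D}$, so the parametrized form of Whitney's even-function theorem removes the square root. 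Three points worth tightening: (i) the auxiliary manifold need only be nontrapping with strictly convex boundary, as in the proof of Lemma \ref{uf_regularity}; the lemma is stated under the standing assumptions of Section \ref{sec:preliminaries}, so you should not invoke simplicity; (ii) you should justify that $\tau(x,\xi)$ really is the root $g + \sqrt{D}$ and not some other crossing --- this follows since $\rho(x) \geq 0$ forces $t_- \leq 0 \leq t_+$, and since $\tau$ is continuous with $\tau = 0$ at the glancing point (Lemma \ref{tau_regularity}), so for nearby points the exit time stays in the small time window where the preparation-theorem factorization is valid; (iii) the Whitney theorem you need is the version with smooth dependence on the parameters $(x,\xi)$, which is classical but should be cited in that form. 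With these remarks the proof is complete.
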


As for $u^F$, sometimes we can work under the extra assumption that $F$ vanishes near $\partial M$ in which case $u^F$ is smooth on $SM$. At other times, we can use the fact that the odd part of $u^F$ is smooth in $SM$ provided that $F$ is even. If $u$ is a function on $SM$ the even and odd parts are defined by 
\begin{align*}
u_{\pm}(x,\xi) &= \frac{1}{2}(u(x,\xi) \pm u(x,-\xi)).
\end{align*}
Of course, $u$ is called even (resp.~odd) if $u = u_+$ (resp.~$u = u_-$).

\begin{lemma} \label{uf_regularity}
If $F$ is an even smooth function on $SM$, then $u^F_-$ is a smooth function in $SM$ and satisfies $\mH u^F_- = -F$.
\end{lemma}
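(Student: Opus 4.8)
The plan is to separate the statement into two independent claims: the transport identity $\mH u^F_- = -F$, which is formal, and the smoothness of $u^F_-$, which is the real content and is localized at the glancing set. First I would treat the equation. Writing $\iota(x,\xi) = (x,-\xi)$ for the flip, the relation $\iota \circ \varphi_t = \varphi_{-t} \circ \iota$ gives $\mH(u \circ \iota) = -(\mH u)\circ \iota$, so $\mH$ interchanges even and odd functions. Applying this to $\mH u^F = -F$ and splitting into parities wherever $u^F$ is differentiable (that is, on $SM \smallsetminus S(\partial M)$), the evenness of $F$ forces the even part to read $\mH u^F_- = -F$ and the odd part to read $\mH u^F_+ = 0$. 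Once smoothness of $u^F_-$ is known, this identity holds on all of $SM$.

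Next I would set up the formula that isolates the singularity. Using $\iota \varphi_t = \varphi_{-t}\iota$ and evenness of $F$ to rewrite $u^F(x,-\xi)$ as a backward integral, one obtains $2u^F_-(x,\xi) = \mathcal{K}(\tau(x,\xi);x,\xi) + \mathcal{K}(-\tau(x,-\xi);x,\xi)$, where $\mathcal{K}(T;x,\xi) = \int_0^T F(\varphi_s(x,\xi))\,ds$. Since the geodesic flow is smooth for finite times, $\mathcal{K}$ is jointly smooth in $(T,x,\xi)$; to make sense of this for $T$ near the exit I would first extend $(M,g)$ and $F$ (evenly) a little past $\partial M$, so that $\mathcal{K}$ is smooth on a neighborhood of the closed chord. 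Consequently every source of non-smoothness of $u^F_-$ enters only through the exit times $\tau(x,\pm\xi)$, and by Lemma \ref{tau_regularity} these are smooth away from the glancing set $\mathcal{G} = \{(x,\xi)\,;\,x\in\partial M,\ \langle \xi,\nu\rangle = 0\}$. Hence it suffices to prove smoothness of $u^F_-$ across $\mathcal{G}$, and on $\mathcal{G}$ the two terms are evaluated at $T = \tau(x,\xi)$ and $T = -\tau(x,-\xi)$.

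The crux is the structure of $\tau$ near a glancing point. Putting $f(t,x,\xi) = \rho(\gamma(t,x,\xi))$ for a boundary defining function $\rho$, strict convexity of $\partial M$ gives $\partial_t^2 f < 0$, so $f$ has exactly two nearby simple roots; by Weierstrass preparation applied to the elementary symmetric functions of these roots one gets $\tau(x,\xi) = p(x,\xi) + \sqrt{Q(x,\xi)}$ with $p,Q$ smooth and $Q \ge 0$, and the symmetry $f(t,x,-\xi) = f(-t,x,\xi)$ forces $p$ odd and $Q$ even in $\xi$, so that $-\tau(x,-\xi) = p(x,\xi) - \sqrt{Q(x,\xi)}$. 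Substituting, $2u^F_-(x,\xi) = \Psi(p,\sqrt{Q};x,\xi)$ with $\Psi(p,r;x,\xi) := \mathcal{K}(p+r;x,\xi) + \mathcal{K}(p-r;x,\xi)$ smooth and manifestly even in $r$; by Whitney's theorem an even smooth function of $r$ is a smooth function of $r^2 = Q$, so $u^F_-$ is a smooth function of the smooth quantities $p$, $Q$ and $(x,\xi)$. This is precisely the cancellation of the even square-root singularity — the same phenomenon that makes $\tau_-$ smooth in Lemma \ref{tau_regularity} — and I expect establishing this square-root structure of $\tau$ from strict convexity to be the main obstacle; everything else is bookkeeping.
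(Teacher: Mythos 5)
Your proof is correct, but it takes a genuinely different route from the paper. The paper's proof is global and reductive: it extends $(M,g)$ to a larger nontrapping manifold $(\tilde M,g)$, forms the smooth solution $\tilde u(x,\xi)=\int_0^{\tilde\tau(x,\xi)}F(\varphi_t(x,\xi))\,dt$ there, observes that $\tilde u - u^F_- = w_\psi$ with $w=(\tilde u - u^F_-)|_{\partial_+S(M)}$, and then invokes the characterization $C^\infty_\alpha(\partial_+S(M))=\{w \,;\, A_+w\in C^\infty(\partial S(M))\}$ of Lemma \ref{wpsi_regularity}; the evenness of $F$ enters only in the computation showing that $A_+w$ involves just $\tilde\tau$ and $2\tau_-$, both smooth, so smoothness is inherited from Lemma \ref{tau_regularity} through that black box. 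You instead perform a direct local analysis at the glancing set: the folding identity $2u^F_-=\mathcal{K}(\tau(x,\xi);x,\xi)+\mathcal{K}(-\tau(x,-\xi);x,\xi)$, the square-root structure $\tau = p+\sqrt{Q}$, $-\tau(x,-\xi)=p-\sqrt{Q}$ from strict convexity via Malgrange--Weierstrass preparation, and Whitney's even-function theorem to absorb the $\sqrt{Q}$ singularity. In effect you unpack and re-prove the mechanism that underlies both Lemma \ref{tau_regularity} and Lemma \ref{wpsi_regularity} (this is essentially how smoothness of $\tau_-$ is established in \cite{Sh}), whereas the paper quotes that mechanism. What you gain is a self-contained argument that makes the even-square-root cancellation explicit and does not need the $C^\infty_\alpha$ characterization from \cite{pestovuhlmann}; what the paper gains is brevity and no local analysis at all, given the lemmas it has already stated. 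Two small points to tighten: Lemma \ref{tau_regularity} as stated gives smoothness of $\tau$ only on $SM\smallsetminus S(\partial M)$, so your reduction to the glancing set needs the (easy, implicit-function-theorem) observation that $\tau(x,\xi)$ and $\tau(x,-\xi)$ are in fact smooth near non-glancing boundary points, where the relevant exit is transversal; and the evenness of the extension of $F$ past $\partial M$ is not needed, since your formula evaluates $\mathcal{K}$ exactly at the exit times and hence only sees $F$ on $SM$.
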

\begin{proof}
The last statement follows since $R^* \mH u = -\mH R^* u$ where $R$ is the map $R(x,\xi) = (x,-\xi)$. This implies that $(\mH u)_+ = \mH u_-$.

We will reduce the smoothness statement to Lemma \ref{wpsi_regularity}. Let $(\tilde{M},g)$ be a nontrapping manifold with strictly convex boundary so that $M \subseteq \tilde{M}^{\text{int}}$ (this can be achieved by embedding $(M,g)$ to a compact manifold $(S,g)$ without boundary and by looking at a small neighborhood of $M$ in $S$). If $\tilde{\tau}(x,\xi)$ is the exit time of geodesics in $(\tilde{M},g)$, we know that $\tilde{\tau}$ is smooth in $S(\tilde{M}^{\text{int}})$.

Extend $F$ as a smooth even function into $S \tilde{M}$, and define 
\begin{equation*}
\tilde{u}(x,\xi) = \int_0^{\tilde{\tau}(x,\xi)} F(\varphi_t(x,\xi)) \,dt
\end{equation*}
where $\varphi_t$ is the geodesic flow in $(\tilde{M},g)$. Then $\tilde{u} \in C^{\infty}(SM)$ and $\mH \tilde{u} = -F$ in $SM$.

Let $w = (\tilde{u} - u^F_-)|_{\partial_+ S(M)}$. Since $\mH (\tilde{u} - u^F_-) = 0$ in $S(M^{\text{int}})$ and $\tilde{u} - u^F_-$ is continuous in $SM$, we obtain $\tilde{u} - u^F_- = w_{\psi}$. Thus, to show that $u^F_-$ is smooth in $SM$ it is enough by Lemma \ref{wpsi_regularity} to prove that $A_+ w$ is in $C^{\infty}(\partial S(M))$. A short computation, using that $F$ is even, gives that for $(x,\xi) \in \partial S(M)$ 
\begin{equation*}
A_+ w(x,\xi) = \frac{1}{2} \int_0^{\tilde{\tau}(x,\xi)} F(\varphi_t(x,\xi)) \,dt + \frac{1}{2} \int_{2\tau_-(x,\xi)}^{\tilde{\tau}(x,\xi)} F(\varphi_t(x,\xi)) \,dt.
\end{equation*}
We know that $2\tau_-$ is smooth in $\partial S(M)$ by Lemma \ref{tau_regularity}, hence also $A_+ w$ is smooth.
\end{proof}

\subsection{Hilbert transform}

To discuss functions which are (anti)holomorphic in the angular variable, we introduce the fiberwise Hilbert transform which acts on smooth functions on $SM$ by 
\begin{equation*}
Hu(x,\xi) = \frac{1}{2\pi} \int_{S_x} \frac{1+\langle \xi,\eta \rangle}{\langle \xi_{\perp}, \eta \rangle} u(x,\eta) \,dS_x(\eta), \quad (x,\xi) \in S M.
\end{equation*}
The integral is understood as a principal value. Here $(\xi_{\perp})_j = \eps_{jk} \xi^k$ where $\eps$ is the clockwise rotation by 90 degrees: 
\begin{equation*}
\eps = \sqrt{\det\,g} \left( \begin{array}{cc} 0 & 1 \\ -1 & 0 \end{array} \right).
\end{equation*}
If $H_0$ is the usual Hilbert transform on the unit circle $S^1$, and if $F_x$ is any orientation preserving isometry from $S_x$ onto $S^1$ (such a map is unique up to rotation on $S^1$), one has for fixed $x$ 
\begin{equation} \label{hilbert_circle}
H = F_x^* H_0 (F_x^{-1})^*.
\end{equation}
The last identity allows to transfer standard properties of the Hilbert transform on the unit circle to the present setting (see also \cite[Section 8]{Sh_deformation}).

A crucial ingredient for our arguments is a commutator formula proved in \cite{pestovuhlmann}, which gives a connection between the geodesic vector field and the fiberwise Hilbert transform.

\begin{prop}
If $u$ is a smooth function on $SM$ then 
\begin{equation} \label{commutator_formula}
[H,\mH]u = \mH_{\perp} u_0 + (\mH_{\perp} u)_0.
\end{equation}
Here $u_0$ is the average of $u$ over the angular variable:
\begin{equation*}
u_0(x) = \frac{1}{2\pi} \int_{S_x} u(x,\xi) \,dS_x(\xi), \quad x \in M.
\end{equation*}
\end{prop}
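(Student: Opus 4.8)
The plan is to analyze both operators through their action on vertical Fourier modes along each fiber $S_x$. Since every fiber is isometric to $S^1$, I would decompose a smooth function as $u = \sum_{n \in \mZ} u_n$, where $u_n$ has angular degree $n$; equivalently, letting $V$ denote the vertical (angular) vector field on $SM$, $u_n$ is characterized by $V u_n = in\, u_n$. The first observation is that $H$ is diagonal in this decomposition: by the transfer identity \eqref{hilbert_circle}, the fiberwise Hilbert transform is conjugate to the classical Hilbert transform on $S^1$, whose kernel $\cot(\tfrac{\phi}{2})$ one recognizes after the half-angle simplification $\frac{1 + \cos\phi}{\sin\phi} = \cot(\tfrac{\phi}{2})$. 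Hence $H u_n = -i\,\mathrm{sgn}(n)\, u_n$, with the zeroth mode annihilated, and in particular $[H, V] = 0$.

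The key structural input concerns the geodesic vector field. Working in the orthonormal frame $\{\mH, \mH_{\perp}, V\}$ of $SM$ adapted to a surface (here $\mH_{\perp}$ is the horizontal field perpendicular to $\mH$, whose sign I would fix by the convention $\mH_{\perp} = [\mH, V]$), I would establish that $\mH$ shifts the angular degree by exactly one, writing $\mH = \mH_+ + \mH_-$ where $\mH_{\pm}$ maps degree $n$ to degree $n \pm 1$. This is the crucial point and the main obstacle: it requires unwinding the structure equations of the frame, equivalently checking that the geodesic spray has no degree-preserving component. Differentiating $\mH = \mH_+ + \mH_-$ against $V$ and matching degrees then yields $\mH_{\perp} = [\mH, V] = -i(\mH_+ - \mH_-)$, so that $\mH_{\pm} = \tfrac{1}{2}(\mH \pm i \mH_{\perp})$.

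With these two facts the commutator is a short mode-by-mode computation. Applying $[H,\mH] = H\mH - \mH H$ to $u_n$ and using that $\mH_{\pm} u_n$ has degree $n \pm 1$, I obtain
\[
[H,\mH] u_n = -i[\mathrm{sgn}(n+1) - \mathrm{sgn}(n)]\, \mH_+ u_n - i[\mathrm{sgn}(n-1) - \mathrm{sgn}(n)]\, \mH_- u_n.
\]
The discrete differences of $\mathrm{sgn}$ vanish for $\abs{n} \geq 2$ and are nonzero only for $n \in \{-1,0\}$ in the $\mH_+$ term and $n \in \{0,1\}$ in the $\mH_-$ term, so summing over $n$ leaves exactly $-i\mH_+(u_0 + u_{-1}) + i\mH_-(u_0 + u_1)$. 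Finally I would match this with the right-hand side: $\mH_{\perp} u_0 = -i\mH_+ u_0 + i\mH_- u_0$ accounts for the degree-$0$ input, while $(\mH_{\perp} u)_0 = -i\mH_+ u_{-1} + i\mH_- u_1$ is precisely the degree-$0$ output of $\mH_{\perp} u$, and the two together reproduce the surviving terms verbatim. The only place requiring genuine care is keeping the sign conventions of $H$, $\mH_{\perp}$, and $\mH_{\pm}$ mutually consistent; everything else is bookkeeping with the sign function.
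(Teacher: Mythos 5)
Your proposal is correct in substance, but there is nothing in the paper to compare it against: the proposition is stated without proof and attributed to Pestov--Uhlmann \cite{pestovuhlmann}, where it was originally established by direct computation. Your route is instead the Fourier-mode (Guillemin--Kazhdan ladder) derivation, and the bookkeeping checks out, signs included: with the paper's conventions ($\xi_\perp$ the clockwise rotation, $V$ the counterclockwise angular derivative) one has $[\mH,V]=\mH_\perp$ --- in the Euclidean model $\mH = \cos\theta\,\partial_{x_1}+\sin\theta\,\partial_{x_2}$, $\mH_\perp = \sin\theta\,\partial_{x_1}-\cos\theta\,\partial_{x_2}$, $V=\partial_\theta$, so this is a one-line check --- and $H(e^{ik\theta}) = -i\,\mathrm{sgn}(k)\,e^{ik\theta}$, exactly as in the paper's preliminaries. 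Granting the splitting $\mH = \mH_+ + \mH_-$, your $\mathrm{sgn}$-difference computation and the identifications $\mH_\perp u_0 = -i\mH_+u_0 + i\mH_-u_0$ and $(\mH_\perp u)_0 = -i\mH_+u_{-1}+i\mH_-u_1$ are all correct, and together they reproduce the identity with the paper's signs.

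Two points must be filled in for this to be a complete proof. First, the degree-shifting property of $\mH$, which you rightly single out as the crux, is left as an assertion; the quickest way to close it is to note that the structure equations $[\mH,V]=\mH_\perp$ and $[\mH_\perp,V]=-\mH$ give $[V,[V,\mH]]=-\mH$, and since $[V,\cdot\,]$ multiplies the degree-$k$-shifting component of an operator by $ik$, this forces $-k^2\mH_k=-\mH_k$, i.e.\ $\mH_k=0$ for $k\neq\pm1$. (Alternatively, in isothermal coordinates $\mH = e^{-\lambda}\bigl(\cos\theta\,\partial_{x_1}+\sin\theta\,\partial_{x_2}+(\partial_{x_2}\lambda\cos\theta-\partial_{x_1}\lambda\sin\theta)\partial_\theta\bigr)$ visibly contains only the frequencies $e^{\pm i\theta}$.) Second, summing the mode-by-mode identities over $n$ deserves one sentence: for smooth $u$ the fiberwise Fourier series converges in $C^\infty(SM)$ and $H$, $\mH$, $\mH_\perp$ act continuously there, so the interchange is legitimate. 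With these additions your argument is complete, and arguably more transparent than the computation it replaces: the identity becomes pure bookkeeping once the diagonality of $H$ and the decomposition $\mH=\mH_++\mH_-$ are in hand.
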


We have used the vector field $\mH_{\perp} = (\xi_{\perp})^j \nabla_j$ on $SM$. Here $\nabla$ is the horizontal derivative on $SM$ \cite{Sh}. In local coordinates it is given by 
\begin{equation*}
\nabla_j u(x,\xi) = \frac{\partial}{\partial x^j} (u(x,\xi/\abs{\xi})) - \Gamma_{jk}^l \xi^k \frac{\partial}{\partial \xi^l}(u(x,\xi/\abs{\xi})).
\end{equation*}

We collect some further basic properties of the Hilbert transform \cite{pestovuhlmann}. These involve even and odd functions with respect to the angular variable. 

\begin{prop}
The Hilbert transform maps even (resp. odd) functions with respect to $\xi$ to even (resp. odd) functions. If $u$ is a function on $SM$ then $H u_{\pm} = H_{\pm} u$ where 
\begin{align*}
H_+ u(x,\xi) &= \frac{1}{2\pi} \int_{S_x} \frac{\langle \xi, \eta \rangle}{\langle \xi_{\perp}, \eta \rangle} u(x,\eta) \,dS_x(\eta), \\
H_- u(x,\xi) &= \frac{1}{2\pi} \int_{S_x} \frac{1}{\langle \xi_{\perp}, \eta \rangle} u(x,\eta) \,dS_x(\eta).
\end{align*}
Also, if $u$ is a function on $SM$ then $(Hu)_0 = 0$, and if $u = u(x)$ then $Hu \equiv 0$.
\end{prop}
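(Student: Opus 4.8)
The plan is to reduce everything to the parity of the kernel under the two reflections $\xi \mapsto -\xi$ and $\eta \mapsto -\eta$, together with the classical facts about $H_0$ obtained from \eqref{hilbert_circle}. First I would split the kernel into
\[
\frac{1+\langle \xi,\eta\rangle}{\langle \xi_{\perp},\eta\rangle} = \frac{1}{\langle \xi_{\perp},\eta\rangle} + \frac{\langle \xi,\eta\rangle}{\langle \xi_{\perp},\eta\rangle},
\]
recognizing the two summands as the kernels of $H_-$ and $H_+$ respectively, so that $H = H_+ + H_-$. Since $\xi_{\perp}$ depends only on $\xi$, one has $\langle \xi_{\perp},-\eta\rangle = -\langle \xi_{\perp},\eta\rangle$ and $\langle -\xi,\eta\rangle = -\langle \xi,\eta\rangle$, from which I would record that the $H_-$ kernel is odd in each of $\xi$ and $\eta$ separately, while the $H_+$ kernel is even in each.

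Next I would derive $H u_{\pm} = H_{\pm} u$ and the parity-preservation statement, which are the same observation phrased twice. Writing $u = u_+ + u_-$ and integrating over the symmetric set $S_x$, the integral $H_- u_+$ vanishes because its integrand is a product of a factor odd in $\eta$ and a factor even in $\eta$; similarly $H_+ u_- = 0$. Hence $H u_+ = H_+ u_+ = H_+ u$ and $H u_- = H_- u_- = H_- u$. Moreover $H_+ u_+$ is even in $\xi$ and $H_- u_-$ is odd in $\xi$, since these operators have kernels of the corresponding parity in $\xi$; this gives that $H$ sends even functions to even and odd to odd. The one point needing care is that the singular set $\{\eta = \pm \xi\}$ is invariant under $\eta \mapsto -\eta$, so the symmetric principal-value cutoff is compatible with the reflection and the vanishing-by-oddness argument is legitimate for these singular integrals.

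For the last two identities I would pass to the circle via \eqref{hilbert_circle}. Fixing $x$ and transporting by the orientation-preserving isometry $F_x$, the operator becomes convolution on $S^1$ against $h(\phi) = \frac{1}{2\pi}\,\frac{1+\cos\phi}{\sin\phi} = \frac{1}{2\pi}\cot(\phi/2)$, so $H_0$ acts on $e^{ik\theta}$ by the Fourier multiplier $\widehat{h}(k) = \int_0^{2\pi} h(\phi) e^{-ik\phi}\,d\phi$. The decisive computation is that $\widehat{h}(0) = \frac{1}{2\pi}\int_0^{2\pi}\cot(\phi/2)\,d\phi = 0$ in the principal-value sense, since $\cot(\phi/2)$ is odd on the circle. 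From $\widehat{h}(0)=0$ both claims drop out: the convolution has $k$-th Fourier coefficient $\widehat{h}(k)\,u_k$, so its zeroth coefficient is $\widehat{h}(0)\,u_0 = 0$, which is exactly $(Hu)_0 = 0$; and a function $u = u(x)$ carries only the $k=0$ mode on each fiber, which is annihilated, giving $Hu \equiv 0$.

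The main obstacle is not conceptual but is the bookkeeping of the principal-value integrals: one must justify that the odd-kernel integrals vanish as principal values, and that the convolution structure and the symbol computation on $S^1$ survive transport by $F_x$ (which is determined only up to a rotation, and a rotation affects none of $\widehat{h}(0)$, the parity count, or the angular averages). Once these are in place, every assertion in the proposition is a direct parity-and-symmetry consequence.
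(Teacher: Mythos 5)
Your proof is correct. Note that the paper itself gives no proof of this proposition: it is stated as a collection of facts cited from \cite{pestovuhlmann}, with the earlier remark that the identity \eqref{hilbert_circle} "allows to transfer standard properties of the Hilbert transform on the unit circle to the present setting." Your argument is the natural self-contained version of exactly that mechanism: the kernel-parity computation (oddness/evenness of $1/\langle \xi_{\perp},\eta\rangle$ and $\langle\xi,\eta\rangle/\langle\xi_{\perp},\eta\rangle$ in each of $\xi$ and $\eta$) yields $Hu_{\pm}=H_{\pm}u$ and parity preservation, while the transfer to $S^1$ with the multiplier value $\widehat{h}(0)=0$ for $h(\phi)=\tfrac{1}{2\pi}\cot(\phi/2)$ yields $(Hu)_0=0$ and $Hu\equiv 0$ when $u=u(x)$. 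Two small points are worth making explicit to fully close the principal-value bookkeeping you flag: (i) the splitting $H=H_++H_-$ is a splitting of PV operators, so one should observe that each summand kernel is separately PV-integrable --- each is odd about both singular points $\eta=\pm\xi$ --- even though the full kernel of $H$ is in fact singular only at $\eta=\xi$ (the numerator $1+\langle\xi,\eta\rangle$ cancels the zero of the denominator at $\eta=-\xi$); (ii) the identity $(H_0u)_0=\widehat{h}(0)\,u_0$ requires interchanging the fiber average with the PV integral, which is justified by applying Fubini to the symmetric truncations, whose kernels integrate to zero exactly by oddness. Both are routine, and your symmetric-excision remark already contains the essential idea.
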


\subsection{Holomorphic functions}

The arguments below will be based on the ability of finding (anti)holomorphic solutions to transport equations. Here, (anti)holomorphic refers to the angular variable. The precise definition uses the Hilbert transform and is as follows.

\begin{definition}
A function $u$ on $SM$ is called holomorphic if 
\begin{equation*}
(\id - iH)u = u_0.
\end{equation*}
We say that $u$ is antiholomorphic if 
\begin{equation*}
(\id + iH)u = u_0.
\end{equation*}
\end{definition}

The next result, which follows by \eqref{hilbert_circle}, will be used many times below.

\begin{lemma}
The product of two (anti)holo\-morphic functions is (anti)holo\-morphic, and $e^w$ is (anti)holomorphic if $w$ is (anti)holomorphic.
\end{lemma}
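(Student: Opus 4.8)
The plan is to transfer the whole question to the unit circle by \eqref{hilbert_circle} and then reason with Fourier series. Both the (anti)holomorphicity condition $(\id \mp iH)u = u_0$ and the angular average $u_0$ involve only the variable $\xi$, so I would fix $x \in M$ and work on the single fiber $S_x$. Choosing an orientation preserving isometry $F_x : S_x \to S^1$ and setting $\tilde u := (F_x^{-1})^* u(x,\cdot)$, formula \eqref{hilbert_circle} says that $H$ on $S_x$ corresponds to the classical Hilbert transform $H_0$ on $S^1$ acting on $\tilde u$. Moreover pullback by $F_x^{-1}$ is multiplicative and sends the angular average to the zeroth Fourier coefficient, so $u$ is holomorphic (resp.\ antiholomorphic) if and only if $\tilde u$ satisfies $(\id \mp iH_0)\tilde u = \hat{\tilde u}(0)$.

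Next I would make the Fourier dictionary explicit. Writing $\tilde u = \sum_k c_k e^{ik\theta}$ and using $H_0 e^{ik\theta} = -i\,\mathrm{sgn}(k) e^{ik\theta}$, one computes $(\id - iH_0)e^{ik\theta} = (1-\mathrm{sgn}(k)) e^{ik\theta}$, which vanishes exactly for $k>0$. Hence $u$ is holomorphic precisely when $c_k = 0$ for all $k<0$, that is, $\tilde u$ is the boundary value of a function holomorphic in the disc; the same computation with the opposite sign shows that $u$ is antiholomorphic precisely when $c_k = 0$ for all $k>0$. Thus, under the transfer, holomorphic and antiholomorphic functions are exactly those whose Fourier support lies in the sub-semigroups $\{k \geq 0\}$ and $\{k \leq 0\}$ of $\mZ$.

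With this dictionary both assertions become statements about Fourier supports. For a product $u_1 u_2$ of holomorphic functions, multiplicativity of the pullback gives that its transfer is $\tilde u_1 \tilde u_2$, whose Fourier coefficients form the convolution of the two sequences; since $\{k\geq 0\}$ is closed under addition, this convolution is again supported in $\{k\geq 0\}$, so $u_1 u_2$ is holomorphic. The antiholomorphic case is identical with $\{k \leq 0\}$. For $e^w$ with $w$ holomorphic, every power $\tilde w^n$ lies in the same class by the product statement, hence so does each partial sum $\sum_{n=0}^N \tilde w^n/n!$; as $\tilde w$ is bounded on the compact circle these partial sums converge uniformly to $e^{\tilde w}$, and since each negative Fourier coefficient is a continuous functional vanishing on every partial sum, it vanishes in the limit. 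Therefore $e^{\tilde w}$, the transfer of $e^w$, is again supported in $\{k\geq 0\}$, and $e^w$ is holomorphic.

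I do not expect a genuine obstacle here; the only point requiring care is that $F_x$ is determined only up to a rotation of $S^1$. A rotation by $\theta_0$ multiplies $c_k$ by the unimodular factor $e^{-ik\theta_0}$ and therefore neither creates nor destroys the vanishing of any individual Fourier mode, so the characterizations above are independent of the choice of $F_x$ and hold simultaneously for every $x$. The substantive content is thus entirely elementary: the closedness of the Fourier supports $\{k \geq 0\}$ and $\{k \leq 0\}$ under convolution and under uniform limits.
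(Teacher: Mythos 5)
Your proposal is correct and follows essentially the same route as the paper, which simply remarks that the lemma ``follows by \eqref{hilbert_circle}'' and relies on the Fourier-series characterization of (anti)holomorphicity developed in the same section (vanishing of negative, resp.\ positive, Fourier coefficients on each fiber). Your write-up merely makes explicit the details the paper leaves implicit: the transfer via $F_x$, closedness of the Fourier supports $\{k\geq 0\}$ and $\{k\leq 0\}$ under convolution, uniform convergence of the exponential series, and independence of the choice of $F_x$ up to rotation.
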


As an example, and to obtain some intuition into the arguments below, we will discuss the above notions in the case where $M$ is an open set in $\mR^2$ with Euclidean metric. Then $SM = M \times S^1$, and any function $u$ on $SM$ may be written as Fourier series 
\begin{equation*}
u(x,e^{i\theta}) = \sum_{k=-\infty}^{\infty} u_k(x) e^{ik\theta}.
\end{equation*}
Here $u_k(x)$ are the Fourier coefficients 
\begin{equation*}
u_k(x) = \frac{1}{2\pi} \int_0^{2\pi} e^{-ik\theta} u(x,e^{i\theta}) \,d\theta.
\end{equation*}
Then the even and odd parts of $u$ are obtained by just taking the even or odd Fourier coefficients, 
\begin{align*}
u_+(x,e^{i\theta}) &= \sum_{k \text{ even}} u_k(x) e^{ik\theta}, \\
u_-(x,e^{i\theta}) &= \sum_{k \text{ odd}} u_k(x) e^{ik\theta}.
\end{align*}
Also, with the convention $\text{sgn}(0) = 0$, 
\begin{equation*}
H(e^{ik\theta}) = -\text{sgn}(k) i e^{ik\theta}.
\end{equation*}
Therefore 
\begin{align*}
(\id+iH) u &= u_0(x) + 2 \sum_{k=1}^{\infty} u_k(x) e^{ik\theta}, \\
(\id-iH) u &= u_0(x) + 2 \sum_{k=-\infty}^{-1} u_k(x) e^{ik\theta}.
\end{align*}
Now $(\id \pm iH) u = u_0$ means that the negative or positive Fourier coefficients vanish. Thus, $u$ is holomorphic (antiholomorphic) if and only if for any $x$ in $M$, $u(x,\,\cdot\,)$ extends into a holomorphic (antiholomorphic) function in the unit disc.

\section{Strategy of proof} \label{sec:strategy}

Our proof of Theorem \ref{thm:injectivity_functions} reduces the attenuated ray transform to the analysis of solutions of a transport equation. Let $\mH$ be the geodesic vector field on the unit sphere bundle $S M$. Then $I^a f = u|_{\partial_+ S M}$ where $u$ satisfies 
\begin{equation*}
(\mH + a)u = -f \text{ in $SM$}, \quad u|_{\partial_- SM} = 0.
\end{equation*}
Holomorphic solutions of certain transport equations will be crucial in the proof. To explain why such solutions might be useful, we first discuss a simple scheme which would imply injectivity and which turns out to work in the unattenuated case. In the end of the section we outline the strategy for the attenuated case.

\subsection*{First inversion scheme}
Let $a$ and $f$ be real valued, and let $u$ be the solution given above. Motivated by the earlier result \cite{kb} in $\mR^2$, it turns out that injectivity of $I^a$ would be a consequence of the following idea:

\begin{quote}
\emph{{Produce a function $u^*$, which is holomorphic (or antiholomorphic) in the angular variable, such that $(u^*)_0 = 0$ and }} 
\begin{equation} \label{ustar_eq}
(\mH + a)u^* = -f
\end{equation}
\emph{{and such that $u^*|_{\partial S(M)}$ is determined by $u|_{\partial S(M)}$.}}
\end{quote}

To see how the above statement could be used to invert the attenuated ray transform, it is enough to take the imaginary part of \eqref{ustar_eq} to obtain 
\begin{equation*}
(\mH + a)(\im\,u^*) = 0.
\end{equation*}
Since $\mH + a = e^{u^a_-} \mH e^{-u^a_-}$ (recall that $u^a_-$ is smooth in $SM$ by Lemma \ref{uf_regularity}) this shows that $e^{-u^a_-} \im\,u^*$ is constant on geodesics, and therefore $\im\,u^*$ is determined by its boundary values on $\partial_+ S(M)$. But $u^*$ is (anti)holomorphic with zero average so its real part is determined by the imaginary part, and we obtain that $u^*$ in $SM$ is determined by $u|_{\partial_+ S(M)} = I^a f$. Then $f$ can be reconstructed from $I^a f$ for instance by taking averages over the angular variable in \eqref{ustar_eq}:
\begin{equation*}
f = -((\mH+a)u^*)_0.
\end{equation*}
This would give an inversion formula for the attenuated ray transform.

\subsection*{The unattenuated case}
The above scheme actually works in the case $a = 0$ where no attenuation is present, and results in a similar inversion formula as in \cite{pestovuhlmann_imrn}. Let $(M,g)$ be a simple surface, and assume for simplicity that $f \in C^{\infty}_c(M^{\text{int}})$ is real valued. We would like to recover $f$ from the knowledge of the geodesic ray transform $I^0 f|_{\partial_+ S(M)}$.

Let $u$ be the solution of 
\begin{equation*}
\mH u = -f \text{ in $SM$}, \quad u|_{\partial_- S(M)} = 0.
\end{equation*}
Then $u = u^f$ and $u|_{\partial_+ S(M)} = I^0 f$. To obtain a holomorphic solution with zero average, the first idea is to take 
\begin{equation*}
u^* = (\id + iH) u_-.
\end{equation*}
To see if $u^*$ solves the transport equation, we use the fact that $\mH u_- = -f$ and compute by \eqref{commutator_formula}
\begin{equation*}
\mH u^* = (\id + iH) \mH u_- - i [H,\mH] u_- = -f - i(\mH_{\perp} u)_0.
\end{equation*}
The last expression on the right was analyzed in \cite[Section 5]{pestovuhlmann_imrn}.

\begin{prop}
If $(M,g)$ is simple then the operators 
\begin{align*}
Wf &= (\mH_{\perp} u^f)_0, \\
W^* f &= (u^{\mH_{\perp} f})_0
\end{align*}
have smooth integral kernels and extend as maps from $L^2(M)$ to $C^{\infty}(M)$. Also, $W^*$ is the adjoint of $W$, and if $(M,g)$ has constant curvature then $W \equiv W^* \equiv 0$ (this last statement is also true in the presence of conjugate points).
\end{prop}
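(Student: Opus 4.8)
The plan is to work first on the dense subspace $C^{\infty}_c(M^{\mathrm{int}})$, on which $u^f$ and $u^{\mH_\perp f}$ are smooth on $SM$ by Lemma~\ref{uf_regularity} (their integrands vanish near $\partial M$), and to establish three facts: that $W^*$ has a smooth integral kernel, that $W^*$ is the $L^2(M)$-adjoint of $W$, and that the kernel vanishes identically in constant curvature. The smooth-kernel property immediately yields boundedness $L^2(M)\to C^{\infty}(M)$, hence the claimed extension, and adjointness transfers everything to $W$. Throughout I use geodesic polar coordinates based at a point $x$: since $(M,g)$ is simple there are no conjugate points and the map $(t,\theta)\mapsto \gamma(t,x,\xi(\theta))$, where $\xi(\theta)\in S_x$ is parametrized by angle with $\partial_\theta\xi=\xi_\perp$, is a diffeomorphism onto $M\smallsetminus\{x\}$. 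I write $y=\gamma(t,x,\xi(\theta))$ and $v(x,y):=t\,\xi(\theta)$ for the resulting smooth vector, and the area element is $dV(y)=\rho(t,x,\theta)\,dt\,d\theta$, where $\rho$ is the magnitude of the perpendicular Jacobi field along $\gamma$, solving $\rho''+K(\gamma)\rho=0$, $\rho(0)=0$, $\rho'(0)=1$, with $K$ the Gaussian curvature. Simplicity gives $\rho(t,x,\theta)>0$ for $0<t\leq\tau$.

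For the kernel of $W^*$, note that $\mH_\perp f(x,\xi)=\langle \xi_\perp,\nabla f(x)\rangle$ since $f=f(x)$, and that $\partial_\theta\gamma=\rho\,\dot\gamma_\perp$ is precisely the Jacobi field generated by rotating the initial direction; hence
\begin{equation*}
(\mH_\perp f)(\varphi_t(x,\xi))=\frac{1}{\rho(t,x,\theta)}\,\partial_\theta\big[f(\gamma(t,x,\xi(\theta)))\big].
\end{equation*}
Extending $f$ by zero to a slightly larger simple manifold $M_1\supset M$ (so the $t$-integrand is compactly supported and the exit time is smooth for $x\in M\subset M_1^{\mathrm{int}}$) and inserting this identity into the definition of $W^*$, I integrate by parts in the periodic variable $\theta$ to move the derivative off $f$, and then change variables back to $y$ via $dt\,d\theta=\rho^{-1}\,dV(y)$. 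This gives
\begin{equation*}
W^* f(x)=\frac{1}{2\pi}\int_M \frac{\partial_\theta\rho}{\rho^{3}}\,f(y)\,dV(y),
\end{equation*}
so that $W^*$ has integral kernel $\tfrac{1}{2\pi}\,\partial_\theta\rho/\rho^{3}$, expressed through the polar coordinates of $y$ relative to $x$.

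The crux, and the only genuinely technical point, is that this kernel --- manifestly smooth off the diagonal, where $\rho\neq 0$ and $(t,\theta)$ depend smoothly on $(x,y)$ --- extends smoothly across the diagonal $y=x$ in spite of the $\rho^{-3}$ factor. Here simplicity is essential: because $\rho$ is built from the Jacobi equation, its $\theta$-dependence enters only through the variation of the curvature along $\gamma$, so that $\rho=t+O(t^3)$ while $\partial_\theta\rho=O(t^4)$, the leading term being a multiple of $t^4\,dK_x(\xi_\perp)$. Consequently $\partial_\theta\rho/\rho^{3}=O(t)$, and more precisely each apparent inverse power of $\rho\sim|v|$ is accompanied by enough factors of $t\xi$ (equal to $v(x,y)$) that the kernel assembles into a smooth function of the smooth vector $v(x,y)$; hence it is smooth on $M\times M$. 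Verifying this cancellation --- that the pole of $\rho^{-3}$ is exactly absorbed by the high-order vanishing of $\partial_\theta\rho$ --- is where I expect the main work to lie, and it is precisely the place where the no-conjugate-point hypothesis from simplicity enters.

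For adjointness, take $f,h\in C^{\infty}_c(M^{\mathrm{int}})$ and write the angular average against $h$ as an integral over $SM$ with respect to the Liouville measure $d\Sigma=dV_g(x)\,dS_x(\xi)$, so that $\langle Wf,h\rangle_{L^2(M)}=\tfrac{1}{2\pi}\int_{SM}(\mH_\perp u^f)\,h\,d\Sigma$. Since $\mH_\perp$ is divergence-free for $d\Sigma$, integrating by parts moves it onto $h$; the boundary integral over $\partial S(M)$ vanishes because $h$ is supported in $M^{\mathrm{int}}$, leaving $-\tfrac{1}{2\pi}\int_{SM}u^f(\mH_\perp h)\,d\Sigma$. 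A Santal\'o-type duality for the solution operator $F\mapsto u^F$, combined with the fact that $\mH_\perp h$ is odd in $\xi$ while $f$ is even, rewrites this as $\tfrac{1}{2\pi}\int_{SM}f\,u^{\mH_\perp h}\,d\Sigma=\langle f,W^*h\rangle$; thus $W^*$ is the adjoint of $W$, and $W$ inherits the transposed smooth kernel and the mapping property $L^2(M)\to C^{\infty}(M)$. Finally, in constant curvature the Jacobi equation $\rho''+K\rho=0$ with universal initial data has a solution $\rho=\rho(t)$ independent of direction, so $\partial_\theta\rho\equiv 0$ and both kernels vanish. In fact this already follows before the change of variables: for each fixed $t$ the direction-independent factor $\rho(t)^{-1}$ pulls out of $\int_0^{2\pi}\partial_\theta[f(\gamma(t,x,\xi(\theta)))]\,d\theta=0$, and since this uses only periodicity and the direction-independence of $\rho$ --- neither the injectivity of the polar map nor the non-vanishing of $\rho$ --- the vanishing $W\equiv W^*\equiv 0$ persists even when conjugate points are present.
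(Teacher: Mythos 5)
Your overall architecture is sound, and it is essentially a reconstruction of the argument the paper itself does not reproduce but merely cites (the paper's entire ``proof'' of this proposition is the reference to \cite[Section 5]{pestovuhlmann_imrn}): you compute the Schwartz kernel of $W^*$ in geodesic polar coordinates, pass to $W$ by duality, and read off the constant-curvature vanishing from the direction-independence of the scalar Jacobi solution. Your kernel formula $\frac{1}{2\pi}\,\partial_\theta\rho/\rho^3$ is correct, the adjointness argument (divergence-freeness of $\mH_{\perp}$, Santal\'o's formula, parity of $\mH_{\perp}h$ versus $f$) is correct, and your closing remark about conjugate points is handled properly.

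The genuine gap is at the one place that constitutes the actual content of the proposition: smoothness of the kernel \emph{across} the diagonal. You assert it, flag it as ``where I expect the main work to lie,'' and offer a mechanism --- order-of-vanishing counting, $\partial_\theta\rho = O(t^4)$ against $\rho^3 = O(t^3)$ --- that is insufficient in principle, not merely unfinished. Vanishing order alone never gives two-variable smoothness: with $v = t\xi(\theta)$, the function $t^4\cos(4\theta)/t^3 = \abs{v}\cos 4\theta$ vanishes on the diagonal but is not even $C^1$ there. What is actually required is an \emph{angular} statement. Writing $\rho = t\,a(x,v)$ with $a$ smooth, $a(x,0)=1$ (so $a$ is the Jacobian of $\exp_x$), your kernel becomes $\partial_\theta a/(\abs{v}^2 a^3)$, and one must show $\partial_\theta a$ is divisible by $\abs{v}^2$ with smooth quotient. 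By the harmonic decomposition of homogeneous polynomials this holds if and only if, for every $k$, the degree-$k$ Taylor coefficient of $a(x,\cdot)$ contains angular Fourier modes of degree at most $k-2$ (no top harmonics $e^{\pm ik\theta}$). That is a structural property of the Jacobi equation: from $\rho'' + K(\gamma)\rho = 0$ one gets $\rho = t + \sum_{n \geq 3}\rho_n t^n$ with
\begin{equation*}
n(n-1)\rho_n = -\sum_{j+m=n-2} \tfrac{1}{j!}\big(\nabla^j K(\xi,\dots,\xi)\big)(x)\,\rho_m,
\end{equation*}
and an induction on this recursion shows the trigonometric degree of $\rho_n$ in $\theta$ is at most $n-3$ --- exactly the needed bound --- after which a Borel-type division argument yields smoothness of the quotient, jointly in $(x,v)$. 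None of this inductive structure appears in your proposal, so the key assertion remains unproved. A related minor misattribution: the no-conjugate-point hypothesis is \emph{not} what rescues the diagonal (short geodesic segments never carry conjugate points); it is what you need \emph{off} the diagonal, to keep $\rho > 0$ and the polar map a global diffeomorphism so that the kernel is smooth there.
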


It follows that 
\begin{equation} \label{unatt_ustar_eq_first}
\mH u^* = -f - iWf.
\end{equation}
Thus, if $(M,g)$ has constant curvature, then $Wf = 0$ and $u^*$ is the required holomorphic solution with zero average. Note that 
\begin{equation*}
u^*|_{\partial S(M)} = (\id + iH) u_-|_{\partial S(M)}
\end{equation*}
so $u^*|_{\partial S(M)}$ is indeed determined by $u|_{\partial S(M)}$. The scheme above now gives an inversion formula for the geodesic ray transform $I^0$.

In the case where $(M,g)$ does not have constant curvature the quantity $Wf$ may be nonzero, but one can still obtain a Fredholm type inversion formula as in \cite{pestovuhlmann_imrn}. The right hand side in \eqref{unatt_ustar_eq_first} is complex so splitting into real and imaginary parts is not immediately useful. However, one can iterate once more and introduce the antiholomorphic odd function 
\begin{equation} \label{ustarstar}
u^{**} = (\id - iH) u^{f+iWf}_-.
\end{equation}
This satisfies by \eqref{commutator_formula} and the fact that $\mH u^{f+iWf}_- = -f-iWf$ 
\begin{align*}
\mH u^{**} &= (\id - iH)\mH u^{f+iWf}_- + i [H,\mH] u^{f+iWf}_- \\
 &= - f - iWf + i(\mH_{\perp} u^{f+iWf})_0 \\
 &= - f - W^2 f.
\end{align*}
Now $f + W^2 f$ is real and the inversion scheme above can be used to recover $f + W^2 f$ from $I^0 f$. Thus we have constructed $f$ up to a Fredholm error.

\subsection*{The attenuated case}
If $(M,g)$ is a simple 2D manifold and $a$ is a real attenuation coefficient, we will use a modification of this inversion scheme which still involves holomorphic solutions. We explain the idea for proving injectivity. Suppose that $f \in C^{\infty}_c(M^{\text{int}})$ is real valued and $I^a f \equiv 0$. Then the corresponding solution $u$ of the transport equation satisfies 
\begin{equation*}
(\mH + a)u = -f \text{ in $SM$}, \quad u|_{\partial SM} = 0.
\end{equation*}
The first step in the proof is to find a \emph{holomorphic integrating factor}: an odd holomorphic function $w \in C^{\infty}(SM)$ such that one has the operator identity 
\begin{equation*}
e^w \mH e^{-w} = \mH + a.
\end{equation*}
We derive a characterization for the existence of such $w$, and we will employ pseudodifferential arguments to show that one can always find a holomorphic integrating factor. Given such $w$, the function $e^{-w} u$ satisfies 
\begin{equation*}
\mH(e^{-w} u) = -e^{-w} f \text{ in $SM$}, \quad e^{-w} u|_{\partial SM} = 0.
\end{equation*}
The second main step is to show that any solution which satisfies such a transport equation with holomorphic right hand side and which vanishes on $\partial SM$ is necessarily holomorphic. This uses the commutator formula \eqref{commutator_formula} for $H$ and $\mH$, and boils down to the injectivity of the unattenuated ray transform on $1$-forms \cite{An}. We obtain that $e^{-w} u$ is a holomorphic function, and since $e^w$ is holomorphic then so is $u$.

This argument shows that whenever $I^a f \equiv 0$, the solution $u$ of the transport equation must be holomorphic. Since $u$ is real it is also antiholomorphic, which shows that $u$ only depends on $x$. Thus $u = u_0$, and the transport equation reads 
\begin{equation*}
d u_0(\xi) + a u_0 = -f \text{ in $SM$}, \quad u_0|_{\partial SM} = 0.
\end{equation*}
Evaluating at $\pm \xi$ shows that $du_0 = 0$, and consequently $u_0 = 0$ and $f = 0$.

\section{Holomorphic integrating factors} \label{sec:integrating_factors}

Let $(M,g)$ be a simple surface and let $a$ be a smooth function on $M$. We consider the problem of finding holomorphic integrating factors for the operator $\mH + a$. More precisely, we are looking for smooth holomorphic functions $w$ on $SM$ such that 
\begin{equation*}
(\mH + a)v = e^w \mH (e^{-w} v)
\end{equation*}
for all smooth functions $v$ on $SM$. An equivalent condition is that $w$ should satisfy 
\begin{equation*}
\mH w = -a.
\end{equation*}
The main result of this section shows that holomorphic integrating factors always exist.

\begin{prop} \label{prop:holomorphic_integratingfactor}
Let $(M,g)$ be a simple 2D manifold and let $a$ be any smooth complex function on $M$. There exist a holomorphic $w \in C^{\infty}(SM)$ and an antiholomorphic $\tilde{w} \in C^{\infty}(SM)$, both odd functions, such that $\mH w = \mH \tilde{w} = -a$.
\end{prop}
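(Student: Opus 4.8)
The plan is to solve the transport equation $\mH w = -a$ by first writing down an explicit but non-holomorphic solution and then correcting it to a holomorphic one, the correction being governed by the smoothing operator $W$ introduced above. Since $a$ depends only on $x$ it is an even function on $SM$ with $Ha = 0$, so Lemma \ref{uf_regularity} supplies the smooth odd solution $u^a_-$ of $\mH u^a_- = -a$. The natural holomorphic candidate is
\begin{equation*}
w := (\id + iH)\,u^a_-,
\end{equation*}
which is odd (because $H$ preserves odd functions) and holomorphic (because $w_0 = 0$ and $(\id - iH)(\id + iH) = \id + H^2$ annihilates functions of zero fiberwise average). Using the commutator formula \eqref{commutator_formula} together with $Ha = 0$ and $(u^a_-)_0 = 0$, a short computation gives
\begin{equation*}
\mH w = -a - i(\mH_{\perp} u^a)_0 = -a - iWa,
\end{equation*}
so this $w$ solves the equation modulo the smooth error $-iWa$.

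To cancel the error I would solve, for an unknown function $b$ on $M$, the equation $(\id + iW)b = a$ and then set $w := (\id + iH)\,u^b_-$; by the same computation this satisfies $\mH w = -b - iWb = -a$ exactly, and it is holomorphic and odd, while smoothness is automatic since $b = a - iWb$ is smooth and $u^b_-$ is smooth by Lemma \ref{uf_regularity}. Because $W$ maps $L^2(M)$ into $C^{\infty}(M)$ with smooth kernel, it is compact, so $\id + iW$ is Fredholm of index zero on $L^2(M)$ and is invertible precisely when it is injective. The antiholomorphic factor $\tilde w$ is produced in exactly the same way with $H$ replaced by $-H$: here the error has the opposite sign, so one solves $(\id - iW)\tilde b = a$ and sets $\tilde w := (\id - iH)\,u^{\tilde b}_-$.

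The main obstacle is therefore the injectivity of $\id + iW$ (and of $\id - iW$) for arbitrary complex $a$, and this is exactly where the geometry of the simple surface and the pseudodifferential structure have to enter: for constant curvature $W \equiv 0$ and there is nothing to prove, whereas in general $W \neq W^*$, so no cheap positivity or self-adjointness is available. My approach would be to analyze a hypothetical kernel element $b$, which yields a smooth holomorphic odd $w$ with $\mH w = 0$, and to force $b = 0$ by feeding $w$ back through the commutator formula \eqref{commutator_formula} and reducing to a uniqueness statement for the homogeneous transport equation with holomorphic data — ultimately to injectivity of the unattenuated ray transform on functions and solenoidal $1$-forms on simple surfaces, the same mechanism exploited in Section \ref{sec:uniqueness} via \cite{An}. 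An alternative route I would keep in mind is to use the explicit formula for the adjoint $W^*$ to rule out the eigenvalue $\pm i$ of $W$ directly.

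The remaining technical point, which I would dispatch at the outset, is that $u^a_-$ and the objects built from it are genuinely smooth on all of $SM$ despite the mild glancing singularity of the exit time $\tau$; this is precisely what Lemmas \ref{tau_regularity}--\ref{uf_regularity} are designed to handle, and it is what makes the applications of \eqref{commutator_formula} and of the mapping property of $W$ legitimate. With that regularity in place, the construction above produces the desired odd holomorphic $w$ and odd antiholomorphic $\tilde w$ once invertibility of $\id \pm iW$ is established.
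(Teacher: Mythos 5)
Your first two steps reproduce exactly the paper's preliminary reduction (its Lemma on $\Gamma a = (\id+iH)u^a_-$ and Corollary \ref{cor:integratingfactors}): the computation $\mH (\id+iH)u^b_- = -b - iWb$ is correct, so any $b$ solving $(\id+iW)b = a$ would furnish the odd holomorphic $w$, and similarly for $\tilde w$. The genuine gap is the invertibility of $\id \pm iW$, which you defer but never actually obtain. Since $W$ is smoothing, $\id \pm iW$ is indeed Fredholm of index zero, so invertibility is equivalent to injectivity; but this injectivity is precisely the open question for general simple surfaces --- the paper states explicitly that surjectivity of $\id \pm iW$ is known only for constant curvature (where $W=0$) and small perturbations thereof (where $\norm{W}$ is small), and it abandons this route for exactly that reason. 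Your sketched argument for injectivity does not close the gap: a kernel element $b$ of $\id+iW$ produces $w=(\id+iH)u^b_-$ with $\mH w = 0$, but $w$ satisfies \emph{no} boundary condition on $\partial S(M)$ (only $u^b|_{\partial_- S(M)}=0$; its odd part does not vanish there, and the fiberwise operator $\id+iH$ then mixes in values at all interior directions). The homogeneous equation $\mH w=0$ with free boundary values has an infinite-dimensional solution space --- every function of the form $h_\psi$, constant along geodesics --- so there is no reduction to injectivity of $I^0$, whose uniqueness statements all require vanishing data on the boundary. Your fallback of excluding the eigenvalues $\mp i$ of $W$ via $W^*$ fares no better: $W$ is compact but neither self-adjoint nor normal, and since its kernel is real this would amount to excluding $-1$ from the spectrum of $W^2$, which is likewise not known.

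The paper's actual proof circumvents the operator $\id\pm iW$ altogether by enlarging the family of correctors: instead of perturbing $a$ by $iWb$ with $b$ a function on $M$, it characterizes (Lemma \ref{lemma:integratingfactor_characterization}) the attenuations admitting odd holomorphic integrating factors as those of the form $a = b_0 + i(\mH_{\perp} u^b)_0 + i(\mH_{\perp} h_{\psi})_0$ with $h \in C^{\infty}_{\alpha}(\partial_+ S(M))$, and then proves (Lemma \ref{lemma:s_surjective}) that the operator $Sh = (\mH_{\perp} h_{\psi})_0$ is surjective onto $C^{\infty}(M)$. Surjectivity is obtained by duality: the adjoint of $S$ is $f \mapsto -\frac{1}{2\pi} I^0(*df(\xi))$, which is injective because $*df$ is solenoidal and the unattenuated ray transform on solenoidal $1$-forms is injective \cite{An}, and its range is closed in the appropriate sense by ellipticity of the associated normal operator together with \cite[Theorem 37.2]{treves}. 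The essential difference from your plan is that the adjoint of $S$ is an operator whose injectivity is a known theorem with the correct (vanishing) boundary data built in, whereas injectivity of $\id \pm iW$ has no such reduction available.
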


One smooth solution to the equation $\mH w = -a$ is given by $w = u^a_-$. We begin with the simple observation that on constant curvature manifolds, the projection of $u^a_-$ to holomorphic functions also satisfies this condition.

\begin{lemma}
Consider the operators $\Gamma$, $\tilde{\Gamma}$ acting on functions on $M$ by 
\begin{equation*}
\Gamma a = (\id + iH) u^a_-, \qquad \tilde{\Gamma} a = (\id-iH) u^a_-.
\end{equation*}
Then $\Gamma$ (resp.~$\tilde{\Gamma}$) maps $C^{\infty}(M)$ to odd functions in $C^{\infty}(SM)$ which are holomorphic (resp.~antiholomorphic) in the angular variable. One has 
\begin{equation*}
\mH \Gamma a = -a - iWa, \qquad \mH \tilde{\Gamma} a = -a + iWa.
\end{equation*}
If $Wa = 0$, then 
\begin{equation*}
\mH \Gamma a = \mH \tilde{\Gamma} a = -a.
\end{equation*}
\end{lemma}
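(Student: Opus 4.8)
The plan is to build everything from the single smooth solution $u^a_-$. By Lemma \ref{uf_regularity}, since $a = a(x)$ is an even function on $SM$, the odd part $u^a_-$ is smooth in $SM$ and satisfies $\mH u^a_- = -a$. The claims that $\Gamma a = (\id+iH)u^a_-$ and $\tilde{\Gamma} a = (\id-iH)u^a_-$ are smooth and odd then follow at once: $H$ preserves smoothness fiberwise (via \eqref{hilbert_circle}) and maps odd functions to odd functions, so both expressions are smooth odd functions on $SM$.

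For the (anti)holomorphy I would first record the general fiberwise identity $H^2 = -\id + (\,\cdot\,)_0$, the analogue of $H_0^2 = -\id + P_0$ on $S^1$ transferred through \eqref{hilbert_circle}, together with the fact that $(Hv)_0 = 0$. A one-line computation then shows that $(\id+iH)v$ is holomorphic and $(\id-iH)v$ is antiholomorphic for \emph{any} smooth $v$: one has $(\id-iH)(\id+iH)v = v + H^2 v = v_0$, while $\bigl((\id+iH)v\bigr)_0 = v_0$, so the defining relation of a holomorphic function holds. Taking $v = u^a_-$ gives that $\Gamma a$ is holomorphic and $\tilde{\Gamma} a$ is antiholomorphic.

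The heart of the matter is the computation of $\mH \Gamma a$. I would write $\mH(\id+iH)u^a_- = (\id+iH)\mH u^a_- - i[H,\mH]u^a_-$ and substitute $\mH u^a_- = -a$. The first term becomes $(\id+iH)(-a) = -a$, since $a$ depends only on $x$ and hence $Ha = 0$. For the commutator I invoke the formula \eqref{commutator_formula}, giving $[H,\mH]u^a_- = \mH_{\perp}(u^a_-)_0 + (\mH_{\perp}u^a_-)_0$; the first summand vanishes because $u^a_-$ is odd, so $(u^a_-)_0 = 0$.

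The remaining identification $(\mH_{\perp}u^a_-)_0 = Wa = (\mH_{\perp}u^a)_0$ is the step I expect to need the most care. It rests on a parity observation: $\mH_{\perp} = (\xi_{\perp})^j \nabla_j$ carries the odd factor $\xi_{\perp}$ while $\nabla_j$ preserves parity in $\xi$, so $\mH_{\perp}$ sends even functions to odd ones and odd functions to even ones. Hence $\mH_{\perp}u^a_+$ is odd and has vanishing angular average, so that $(\mH_{\perp}u^a)_0 = (\mH_{\perp}u^a_-)_0$. This yields $[H,\mH]u^a_- = Wa$ and therefore $\mH\Gamma a = -a - iWa$; repeating the computation with the sign of $iH$ reversed (which flips the sign of the commutator term) gives $\mH\tilde{\Gamma} a = -a + iWa$. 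The final assertion is then immediate: if $Wa = 0$, both formulas collapse to $\mH\Gamma a = \mH\tilde{\Gamma} a = -a$.
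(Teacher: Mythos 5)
Your proof is correct and follows essentially the same route as the paper: smoothness of $u^a_-$ from Lemma \ref{uf_regularity}, then the commutator formula \eqref{commutator_formula} applied to $u^a_-$, using $(u^a_-)_0 = 0$ and $Ha = 0$, with the case of $\tilde{\Gamma}$ handled by flipping the sign of the commutator term. The extra details you supply --- the identity $H^2 = -\id + (\,\cdot\,)_0$ establishing holomorphy of the projections, and the parity argument identifying $(\mH_{\perp} u^a_-)_0$ with $(\mH_{\perp} u^a)_0 = Wa$ --- are precisely the steps the paper leaves implicit, and you verify them correctly.
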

\begin{proof}
The function $u^a_-$ is smooth by Lemma \ref{uf_regularity}, hence $\Gamma a$ and $\tilde{\Gamma} a$ are odd and smooth. By the commutator formula \eqref{commutator_formula} 
\begin{equation*}
\mH \Gamma a = (\id+iH) \mH u^a_- - i[H,\mH] u^a_- = -a - i(\mH_{\perp} u^a)_0 = - a - i Wa.
\end{equation*}
The computation for $\tilde{\Gamma}$ is analogous.
\end{proof}

\begin{cor} \label{cor:integratingfactors}
If $a \in \text{Ran}(\id+iW) \cap \text{Ran}(\id-iW)$, then there exist smooth holomorphic $w$ and antiholomorphic $\tilde{w}$ such that $\mH w = \mH \tilde{w} = -a$.
\end{cor}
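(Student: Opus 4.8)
The plan is to obtain this directly from the identities in the preceding lemma, which shows that for every $a \in C^\infty(M)$ the function $\Gamma a = (\id+iH)u^a_-$ is smooth, odd and holomorphic with $\mH \Gamma a = -a - iWa = -(\id+iW)a$, while $\tilde{\Gamma} a$ is smooth, odd and antiholomorphic with $\mH \tilde{\Gamma} a = -(\id-iW)a$. In other words, $-\Gamma$ and $-\tilde{\Gamma}$ produce $\mH$-primitives for elements of $\text{Ran}(\id+iW)$ and $\text{Ran}(\id-iW)$ respectively, and these primitives are automatically of the required (anti)holomorphic type.

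Concretely, I would first use the hypothesis $a \in \text{Ran}(\id+iW)$ to pick $b$ with $(\id+iW)b = a$ and set $w := \Gamma b$; the lemma then gives $\mH w = -(\id+iW)b = -a$, with $w$ odd and holomorphic. Symmetrically, the hypothesis $a \in \text{Ran}(\id-iW)$ lets me pick $c$ with $(\id-iW)c = a$ and set $\tilde{w} := \tilde{\Gamma} c$, yielding $\mH \tilde{w} = -(\id-iW)c = -a$ with $\tilde{w}$ odd and antiholomorphic.

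The one point deserving comment is the smoothness of $w$ and $\tilde{w}$, which reduces to smoothness of the preimages $b$ and $c$. Here I would invoke the smoothing property of $W$ established in the earlier proposition (it maps $L^2(M)$ into $C^\infty(M)$): rewriting $(\id+iW)b = a$ as $b = a - iWb$ and using $a \in C^\infty(M)$ together with $Wb \in C^\infty(M)$ forces $b \in C^\infty(M)$, and likewise $c \in C^\infty(M)$. Lemma \ref{uf_regularity} then guarantees that $u^b_-$ and $u^c_-$, and hence $w = \Gamma b$ and $\tilde{w} = \tilde{\Gamma} c$, are genuinely smooth on $SM$.

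I do not anticipate any real obstacle: the corollary is a formal consequence of the lemma's two identities combined with the range hypothesis, the only substantive input being the elliptic-type regularity of $W$. The genuinely difficult statement is that these ranges exhaust $C^\infty(M)$ — i.e. that the hypothesis can be removed altogether — and that is exactly what the pseudodifferential argument for Proposition \ref{prop:holomorphic_integratingfactor} is designed to supply.
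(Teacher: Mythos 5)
Your proposal is correct and follows essentially the same route as the paper: pick preimages $b$, $\tilde b$ under $\id+iW$ and $\id-iW$, set $w = \Gamma b$ and $\tilde w = \tilde\Gamma \tilde b$, and read off $\mH w = \mH\tilde w = -a$ from the lemma's identities. Your additional bootstrapping remark (that any $L^2$ preimage is automatically smooth because $W$ maps $L^2(M)$ to $C^\infty(M)$, via $b = a - iWb$) is a harmless refinement the paper leaves implicit by taking the preimages in $C^\infty(M)$ from the start.
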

\begin{proof}
If $a$ satisfies the given condition, there are $b, \tilde{b} \in C^{\infty}(M)$ such that $a = b+iWb = \tilde{b}-iW\tilde{b}$. Letting $w = \Gamma b$ and $\tilde{w} = \tilde{\Gamma} \tilde{b}$, we have 
\begin{equation*}
\mH w = -b - iWb = -a
\end{equation*}
and similarly for $\tilde{w}$.
\end{proof}

The corollary shows that on any manifold for which $\id+iW$ and $\id-iW$ are surjective, one can find holomorphic integrating factors. This includes constant curvature manifolds (since $W = 0$) and small perturbations of these (since $W$ has small norm \cite{krishnan}). However, for general simple manifolds we do not know if $\id \pm iW$ are surjective and one needs to work harder.

First we give a characterization of those $a$ for which $\mH w = -a$ for some (anti)holomorphic function $w$.

\begin{lemma} \label{lemma:integratingfactor_characterization}
Let $a$ be a smooth complex function on $M$. The following conditions are equivalent:
\begin{enumerate}
\item[(1)]
There exists a holomorphic (resp.~antiholomorphic) odd function $w$ in $C^{\infty}(SM)$ such that 
\begin{equation*}
\mH w = -a.
\end{equation*}
\item[(2)]
There exists a function $h \in C^{\infty}_{\alpha}(\partial_+ S(M))$ and an antiholomorphic (resp.~holomorphic) even function $b \in C^{\infty}(SM)$ so that 
\begin{equation*}
a = b_0 + i(\mH_{\perp} u^b)_0 + i(\mH_{\perp} h_{\psi})_0
\end{equation*}
(resp.~$a = b_0 - i(\mH_{\perp} u^b)_0 - i(\mH_{\perp} h_{\psi})_0$).
\end{enumerate}
\begin{proof}
Let first $w$ be a holomorphic odd function with $\mH w = -a$ (the case of antiholomorphic functions is analogous). Then $w = (\id+iH)\hat{w}$ for some odd $\hat{w} \in C^{\infty}(SM)$, in fact one may take $\hat{w} = \frac{1}{2} w$.

Now by the commutator formula \eqref{commutator_formula} 
\begin{align*}
-a &= \mH w = \mH (\id+iH) \hat{w} \\
 &= (\id+iH) \mH \hat{w} - i[H,\mH] \hat{w} \\
 &= (\id+iH) \mH \hat{w} - i (\mH_{\perp} \hat{w})_0.
\end{align*}
This shows that $(\id+iH) \mH \hat{w}$ only depends on $x$. Consequently the function $b = -\mH \hat{w}$ is antiholomorphic and even, and we have $(\id+iH) \mH \hat{w} = -b_0$. The equality $\mH \hat{w} = -b$ implies that $\mH(\hat{w}-u^b_-) = 0$, hence $\hat{w} = u^b_- + h_{\psi}$ for some $h \in C^{\infty}_{\alpha}(\partial_+ S(M))$ by Lemma \ref{uf_regularity}. Thus 
\begin{align*}
a = b_0 + i(\mH_{\perp} u^b)_0 + i (\mH_{\perp} h_{\psi})_0.
\end{align*}
We have proved (2).

Conversely, assume $a$ is of the form given in (2) with $b$ antiholomorphic and even. Define $\hat{w} = u^b_- + (h_{\psi})_-$. Then $\hat{w}$ is odd, $\mH \hat{w} = -b$, and 
\begin{equation*}
a = b_0 + i(\mH_{\perp} \hat{w})_0 = -(\id+iH)\mH \hat{w} + i(\mH_{\perp} \hat{w})_0.
\end{equation*}
Define $w = (\id+iH) \hat{w}$. It follows that $w$ is holomorphic and odd, and 
\begin{align*}
\mH w &= (\id+iH) \mH \hat{w} -i[H,\mH] \hat{w} = (\id+iH) \mH \hat{w} -i (\mH_{\perp} \hat{w})_0 \\
 &= -a
\end{align*}
as required.
\end{proof}
\end{lemma}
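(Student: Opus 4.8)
The plan is to prove the equivalence by direct computation, the engine being the commutator formula \eqref{commutator_formula} together with the parity behaviour of the three operators involved: $H$ preserves parity in $\xi$, while both $\mH$ and $\mH_{\perp}$ reverse it. I will treat the holomorphic case explicitly, the antiholomorphic case being identical up to signs.

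For the implication (1) $\Rightarrow$ (2), I would start from a holomorphic odd $w$ with $\mH w = -a$. Since $w$ is odd its average vanishes, so holomorphicity reads $(\id - iH)w = 0$, i.e. $Hw = -iw$; hence $w = (\id+iH)\hat{w}$ with the odd smooth function $\hat{w} = \tfrac{1}{2} w$. Applying $\mH$ and invoking \eqref{commutator_formula} (with $\hat{w}_0 = 0$) gives
\begin{equation*}
-a = \mH w = (\id+iH)\mH\hat{w} - i(\mH_{\perp}\hat{w})_0.
\end{equation*}
Both $a$ and $(\mH_{\perp}\hat{w})_0$ depend only on $x$, so the same must hold for $(\id+iH)\mH\hat{w}$. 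Setting $b = -\mH\hat{w}$ — which is even because $\hat{w}$ is odd — this says precisely that $b$ is antiholomorphic, with $(\id+iH)b = b_0$, and rearranging yields $a = b_0 + i(\mH_{\perp}\hat{w})_0$. It then remains to split off the part of $\hat{w}$ lying in the kernel of $\mH$: since $\mH u^b_- = -b = \mH\hat{w}$ by Lemma \ref{uf_regularity}, the difference $\hat{w} - u^b_-$ is annihilated by $\mH$, hence equals $h_{\psi}$ for its boundary trace $h$, and $h \in C^{\infty}_{\alpha}(\partial_+ S(M))$ by Lemma \ref{wpsi_regularity}. Substituting $\hat{w} = u^b_- + h_{\psi}$ and using the parity argument to replace $(\mH_{\perp}\hat{w})_0$ by $(\mH_{\perp}u^b)_0 + (\mH_{\perp}h_{\psi})_0$ produces the formula in (2).

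For the converse (2) $\Rightarrow$ (1) I would simply reverse the construction: given antiholomorphic even $b$ and $h \in C^{\infty}_{\alpha}(\partial_+ S(M))$, set $\hat{w} = u^b_- + (h_{\psi})_-$, an odd smooth function with $\mH\hat{w} = -b$, and define $w = (\id+iH)\hat{w}$, which is holomorphic and odd. Feeding $\mH\hat{w} = -b$ and the antiholomorphicity $(\id+iH)b = b_0$ back through \eqref{commutator_formula} recovers $\mH w = -a$ from the assumed identity for $a$.

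The computations are short; the genuine care is in the regularity and parity bookkeeping rather than in any single estimate. The main point to get right is the kernel step: one must know that a smooth function killed by $\mH$ is exactly of the form $h_{\psi}$ with $h$ in the distinguished class $C^{\infty}_{\alpha}(\partial_+ S(M))$, so that everything stays in $C^{\infty}(SM)$, which is where Lemmas \ref{uf_regularity} and \ref{wpsi_regularity} are essential. A second recurring point is the identity $(\mH_{\perp}v)_0 = (\mH_{\perp}v_-)_0$, valid because $\mH_{\perp}$ reverses parity and the fibre average annihilates odd functions; this lets me pass freely between $\hat{w}$, $u^b$ and $u^b_-$ in the averaged terms.
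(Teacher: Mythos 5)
Your proposal is correct and follows essentially the same route as the paper's own proof: the decomposition $w = (\id+iH)\hat{w}$ with $\hat{w} = \tfrac{1}{2}w$, the identification $b = -\mH\hat{w}$ via the commutator formula, the kernel step $\hat{w} = u^b_- + h_{\psi}$, and the reversed construction for the converse. Your explicit parity bookkeeping, in particular the identity $(\mH_{\perp} v)_0 = (\mH_{\perp} v_-)_0$ justifying the passage from $(\mH_{\perp} u^b_-)_0$ to $(\mH_{\perp} u^b)_0$, is a point the paper leaves implicit, and note only that the membership $h \in C^{\infty}_{\alpha}(\partial_+ S(M))$ follows directly from the definition of that space once $h_{\psi} = \hat{w} - u^b_-$ is known to be smooth, rather than from Lemma \ref{wpsi_regularity}.
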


We will study the operator appearing in Lemma \ref{lemma:integratingfactor_characterization},
\begin{equation} \label{s_def}
S: C^{\infty}_{\alpha}(\partial_+ S(M)) \to C^{\infty}(M), \ S h = (\mH_{\perp} h_{\psi})_0.
\end{equation}
The main point is the following result.

\begin{lemma} \label{lemma:s_surjective}
The operator $S: C^{\infty}_{\alpha}(\partial_+ S(M)) \to C^{\infty}(M)$ is surjective.
\end{lemma}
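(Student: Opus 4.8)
The plan is to establish surjectivity by duality: I would show that the transpose $S^*$ is injective and that $S$ has closed range, the latter through a pseudodifferential (normal operator) argument, matching the ``pseudodifferential arguments'' announced in the introduction. The first step is to compute $S^*$ explicitly. Pairing $Sh = (\mH_{\perp} h_{\psi})_0$ with $\phi \in C^{\infty}(M)$ in $L^2(M)$ and unfolding the angular average gives $\ip{Sh}{\phi} = \frac{1}{2\pi}\int_{SM} (\mH_{\perp} h_{\psi})\,\phi \,d\Sigma$, where $\phi$ is viewed as a function of $x$ only and $d\Sigma$ is the Liouville measure on $SM$. Since $\mH_{\perp}$ is divergence-free with respect to $d\Sigma$, integrating by parts moves $\mH_{\perp}$ onto $\phi$ and produces a boundary integral over $\partial S(M)$ weighted by $\ip{\xi_{\perp}}{\nu}$. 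Now $\mH_{\perp}\phi = d\phi(\xi_{\perp})$ is the evaluation on $\xi$ of a $1$-form $\alpha_{\phi}$ (the gradient of $\phi$ rotated by $90$ degrees), which is solenoidal, $\delta \alpha_{\phi} = 0$. Combining $\mH h_{\psi} = 0$ with Santal\'o's formula, the interior term becomes the pairing of $h|_{\partial_+ S(M)}$ with the unattenuated ray transform $I^0 \alpha_{\phi}$ against a positive weight. Thus $S^* \phi$ equals, up to a nonvanishing weight, $I^0(\alpha_{\phi})$ plus an explicit boundary functional built from $h_{\psi}|_{\partial S(M)}$ and $\phi|_{\partial M}$.

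Next I would prove injectivity of $S^*$. Suppose $S^* \phi = 0$. The content of this identity is that, after expressing $h_{\psi}|_{\partial S(M)}$ in terms of $h$ via the scattering relation, the functional $h \mapsto -w\,I^0(\alpha_{\phi}) + (\text{boundary contribution})$ vanishes on $C^{\infty}_{\alpha}(\partial_+ S(M))$. Disentangling the interior ray-transform part from the boundary part forces $I^0(\alpha_{\phi}) = 0$. Since $\alpha_{\phi}$ is solenoidal and the kernel of $I^0$ on $1$-forms consists only of exact forms $dp$ with $p|_{\partial M} = 0$, the only solenoidal element it contains is $0$; hence $\alpha_{\phi} = 0$, so $d\phi = 0$ and $\phi$ is constant. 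The surviving boundary functional $h \mapsto c\int_{\partial S(M)} h_{\psi}\,\ip{\xi_{\perp}}{\nu}$ is nonzero when $c \neq 0$, which then forces $\phi = 0$. Here the injectivity of $I^0$ on solenoidal $1$-forms for simple surfaces is exactly the classical fact used elsewhere in the paper.

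For closed range I would pass to the normal operator $S S^*$ on $M$. It factors schematically as the map $\phi \mapsto \alpha_{\phi}$ (order $1$), followed by the ray-transform normal operator $(I^0)^* I^0$ (an elliptic pseudodifferential operator of order $-1$, acting on the solenoidal forms $\alpha_{\phi}$, whose symbols are perpendicular to $\xi$), followed by a first-order operator recovering the angular average through $\mH_{\perp}$. A symbol computation shows the composition is an elliptic pseudodifferential operator of order $1$ on $M$. Passing to a slightly larger simple manifold $(M_1,g)$ and extending by zero, as in the stability section, removes the boundary difficulties and makes $S S^*$ Fredholm with closed range on the relevant Sobolev spaces. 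Since $\ker(S S^*) = \ker(S^*) = 0$ by the previous step, $S S^*$ is bijective, and elliptic regularity upgrades solvability to $C^{\infty}$. Hence, given $\phi \in C^{\infty}(M)$, I solve $S S^* \psi = \phi$ with $\psi$ smooth and set $h = S^* \psi$; the ray-transform structure of $S^*$ guarantees that $h \in C^{\infty}_{\alpha}(\partial_+ S(M))$, and $Sh = \phi$.

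The hard part is the pseudodifferential step: verifying that $S S^*$ is genuinely elliptic of the claimed order through the precise principal symbol computation, and correctly handling the boundary terms appearing in $S^*$ (which is the reason for enlarging $M$ to $M_1$). A secondary technical point is confirming that the produced solution $h = S^* \psi$ really lands in $C^{\infty}_{\alpha}(\partial_+ S(M))$ rather than merely in $C^{\infty}(\partial_+ S(M))$, which requires the smoothness criterion of Lemma \ref{wpsi_regularity}.
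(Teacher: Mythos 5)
Your proposal shares the paper's key ingredients (duality, injectivity of $I^0$ on solenoidal $1$-forms, ellipticity of a normal-type operator), but the concrete route you take has two genuine gaps, and both occur exactly at the points you set aside as ``technical''. The first is the closed-range step. You want $SS^*$ to be Fredholm of index zero because it is elliptic of order $1$, after ``passing to a slightly larger simple manifold $(M_1,g)$ and extending by zero''. But $M$ and $M_1$ are compact manifolds \emph{with boundary}, and ellipticity of a pseudodifferential operator on a manifold with boundary gives neither Fredholmness nor index zero without imposing boundary conditions; moreover the extension by zero of a general $\phi \in C^{\infty}(M)$ is not smooth, so interior elliptic regularity on $M_1$ does not control behavior up to $\partial M$. ``Injective $+$ Fredholm $\Rightarrow$ bijective'' also needs index zero, which you have not justified in this setting, and your injectivity argument for $S^*$ on all of $C^{\infty}(M)$ leans on undetermined boundary terms (the paper's Lemma \ref{lemma:s_adjoint} is proved only for $f \in C^{\infty}_c(M^{\text{int}})$ precisely to avoid them). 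The paper circumvents all of this by a different device: it embeds $M$ in a \emph{closed} surface $N$, replaces $S$ by the auxiliary operator $T\alpha = r_M S_1 I_1(\varphi_1\alpha)$ acting on $1$-forms on $N$, where $I_1$, $S_1$ live on a strictly larger simple manifold $\closure{U}_1 \supset\supset M$, identifies $T^* = Q|_{\mDp_M(N)}$ with $Qv = -\frac{1}{2\pi}(P+\Lambda)(*dv)$ elliptic on the closed manifold $N$, and then combines injectivity of $T^*$ (via \cite{An}), the Duistermaat--H\"ormander weak$^*$ closed range argument \cite{DH2}, and the Fr\'echet-space surjectivity criterion \cite{treves}. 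Ellipticity on a closed manifold together with the support condition $\supp v \subseteq M$ is what makes the closed-range argument go through; your version on $M_1$ has no substitute for it.

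The second gap is the membership $h \in C^{\infty}_{\alpha}(\partial_+ S(M))$, which you call secondary but which is in fact a central obstruction --- it is the very reason the paper routes the construction through $\closure{U}_1$. Your candidate $h = S^*\psi = -\frac{1}{2\pi}I^0(*d\psi)$ is the ray transform, in $M$ itself, of a $1$-form that need not vanish near $\partial M$: nothing forces the solution $\psi$ of $SS^*\psi = \phi$ to be compactly supported in $M^{\text{int}}$. For such data, $I^0(*d\psi)$ is smooth on $\partial_+ S(M)$, but its even continuation $A_+$ across the glancing set $S(\partial M)$ is in general \emph{not} smooth --- for $1$-forms the integral over the reversed geodesic changes sign, so the natural continuation is the odd one --- and then $h_{\psi} \notin C^{\infty}(SM)$ by Lemma \ref{wpsi_regularity}, so $Sh$ is not defined on the stated space. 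In the paper's proof, $h = (I_1(\varphi_1\alpha))_{\psi_1}|_{\partial_+ S(M)}$ is the restriction of a function constant along geodesics of the larger manifold, which is smooth on all of $SM$ because $SM$ is disjoint from the glancing set $S(\partial U_1)$; this makes $h \in C^{\infty}_{\alpha}(\partial_+ S(M))$ automatic. Without this (or an equivalent device guaranteeing admissibility of $h$), your final step does not produce an element of the domain of $S$, so the surjectivity claim is not established.
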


Given this, it is easy to see that any attenuation coefficient admits holomorphic and antiholomorphic integrating factors.

\begin{proof}[Proof of Proposition \ref{prop:holomorphic_integratingfactor}]
Given the attenuation $a$, we can take $b$ to be any antiholomorphic even function in $C^{\infty}(SM)$ (for instance $b = 0$). Choose $h \in C^{\infty}_{\alpha}(\partial_+ S(M))$ such that $iSh = a-b_0-i(\mH_{\perp} u^b)_0$. Then 
\begin{equation*}
b_0 + i(\mH_{\perp} u^b)_0 + i(\mH_{\perp} h_{\psi})_0 = a
\end{equation*}
and Lemma \ref{lemma:integratingfactor_characterization} shows that $\mH w = -a$ for some odd holomorphic $w$. The antiholomorphic case is analogous.
\end{proof}

It remains to prove Lemma \ref{lemma:s_surjective}. We will establish the surjectivity of $S$ by proving that its adjoint is injective and has closed range. In fact, the adjoint involves the unattenuated ray transform $I^0$ on 1-forms. Recall the space $L^2_{\mu}(\partial_+ S(M))$ where $\mu(x,\xi) = -\langle \xi, \nu(x) \rangle$ and 
\begin{equation*}
(h,h')_{L^2_{\mu}(\partial_+ S(M))} = \int_{\partial_+ S(M)} h h' \mu \,d(\partial S(M)),
\end{equation*}
and $d(\partial S(M))$ is the natural volume form on $\partial S(M)$ \cite[Appendix A.4]{DPSU}.

\begin{lemma} \label{lemma:s_adjoint}
If $h \in C^{\infty}_{\alpha}(\partial_+ S(M))$, $f \in C^{\infty}_c(M^{\text{int}})$ then 
\begin{equation*}
(Sh, f)_{L^2(M)} = (h, -\frac{1}{2\pi} I^0 \mH_{\perp} f)_{L^2_{\mu}(\partial_+ S(M))}.
\end{equation*}
\end{lemma}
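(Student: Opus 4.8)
The plan is to compute the $L^2(M)$ pairing $(Sh,f)$ directly from the definition of $S$ and to integrate by parts, transferring the operator $\mH_\perp$ off of $h_\psi$ and onto $f$, thereby producing the unattenuated ray transform $I^0$ applied to $\mH_\perp f$. Recall that $Sh = (\mH_\perp h_\psi)_0$, so by the definition of the angular average,
\begin{equation*}
(Sh,f)_{L^2(M)} = \int_M (\mH_\perp h_\psi)_0(x)\, f(x) \,dV(x) = \frac{1}{2\pi}\int_{SM} (\mH_\perp h_\psi)(x,\xi)\, f(x) \,d(SM),
\end{equation*}
where the last step uses that $f = f(x)$ is independent of $\xi$, so integrating against $f$ over $SM$ and dividing by $2\pi$ reproduces the average. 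The key identity will then be an integration-by-parts formula for the vector field $\mH_\perp$ on the closed manifold (fiber bundle) $SM$. Since $\mH_\perp$ is a divergence-free vector field on $SM$ (this is standard for the horizontal/perpendicular field; it is the generator of a volume-preserving flow), one has $\int_{SM} (\mH_\perp G)\, F \,d(SM) = -\int_{SM} G\,(\mH_\perp F)\,d(SM)$ with no boundary contribution, because $h_\psi$ need not vanish on $\partial SM$ but $\mH_\perp$ is tangent to the fibers over each fixed $x$ and $SM$ has no fiber boundary. Applying this with $G = h_\psi$ and $F = f$ gives
\begin{equation*}
(Sh,f)_{L^2(M)} = -\frac{1}{2\pi}\int_{SM} h_\psi(x,\xi)\,(\mH_\perp f)(x,\xi)\,d(SM).
\end{equation*}

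Next I would use that $h_\psi = w_\psi$ with $w = h$ solves $\mH u = 0$ in $SM$ with $u|_{\partial_+S(M)} = h$; that is, $h_\psi$ is constant along geodesics and equals $h\circ\alpha\circ\psi$. The goal is to recognize the integral $\int_{SM} h_\psi\,(\mH_\perp f)\,d(SM)$ as the $L^2_\mu$ pairing of $h$ with $I^0(\mH_\perp f)$. This requires the standard Santaló-type change of variables that foliates $SM$ by geodesics: the Liouville measure $d(SM)$ decomposes as $\mu\,d(\partial S(M))\,dt$ along the orbits $t\mapsto \varphi_t(x,\xi)$ issuing from $(x,\xi)\in\partial_+S(M)$, with $t$ running over $[0,\tau(x,\xi)]$. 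Performing this, and using that $h_\psi$ is constant along each orbit (so it pulls out of the $dt$-integral as the boundary value $h(x,\xi)$), the inner $dt$-integral of $(\mH_\perp f)(\varphi_t(x,\xi))$ is exactly $I^0(\mH_\perp f)(x,\xi)$ by the definition of the ray transform. This yields
\begin{equation*}
\int_{SM} h_\psi\,(\mH_\perp f)\,d(SM) = \int_{\partial_+ S(M)} h(x,\xi)\, I^0(\mH_\perp f)(x,\xi)\,\mu\,d(\partial S(M)) = (h, I^0 \mH_\perp f)_{L^2_\mu(\partial_+ S(M))},
\end{equation*}
and combining with the previous display gives the claimed formula.

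The main obstacle I anticipate is the careful justification of the two integration-by-parts and change-of-variables steps, specifically the absence of boundary terms. For the $\mH_\perp$ integration by parts one must verify that $\mH_\perp$ is genuinely divergence-free with respect to the Liouville measure on $SM$; this is where the definition of $\mH_\perp = (\xi_\perp)^j\nabla_j$ and the structure of the horizontal derivative enter, and it is the one nonroutine computation. The Santaló foliation step is clean because $f \in C^\infty_c(M^{\text{int}})$ guarantees that $\mH_\perp f$ is supported away from $\partial M$, so there is no difficulty from the lack of smoothness of $\tau$ or $h_\psi$ at the glancing set $S(\partial M)$; the compact support hypothesis on $f$ is precisely what makes all the integrals converge and lets one ignore the boundary subtleties that otherwise complicate arguments involving $u^F$ and $w_\psi$. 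I would present the divergence-free property of $\mH_\perp$ as the essential lemma and treat the change of variables as a direct application of the coarea/Santaló formula.
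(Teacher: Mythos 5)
Your skeleton --- writing $(Sh,f)_{L^2(M)} = \frac{1}{2\pi}\int_{SM}(\mH_{\perp} h_{\psi})\, f \,d(SM)$, integrating by parts to throw $\mH_{\perp}$ onto $f$, and then applying Santal\'o's formula together with the fact that $h_{\psi}$ is constant along geodesics with boundary value $h$ --- is exactly the paper's proof, and your Santal\'o step is handled correctly. The gap is in the step you yourself single out as ``the essential lemma'': the integration by parts for $\mH_{\perp}$. You assert the divergence-free property of $\mH_{\perp}$ as ``standard,'' and you justify the absence of boundary terms by claiming that $\mH_{\perp}$ ``is tangent to the fibers over each fixed $x$ and $SM$ has no fiber boundary.'' That justification is false: $\mH_{\perp} = (\xi_{\perp})^j \nabla_j$ is a \emph{horizontal} vector field, whose flow moves the base point $x$ in the direction $\xi_{\perp}$ while parallel transporting $\xi$; it is not tangent to the fibers $S_x$ (the vertical field $\partial_\theta$ is), and its flow does exit through $\partial S(M)$. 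The correct reason no boundary contribution appears is simply that $f \in C^{\infty}_c(M^{\text{int}})$, so the integrand vanishes near $\partial S(M)$; this is exactly how the paper reduces the identity to showing $\int_{SM} \mH_{\perp} v \,d(SM) = 0$ for every $v$ vanishing near $\partial M$.

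What is then genuinely missing is the proof of that vanishing statement, and this is where the paper's one nonroutine idea enters. The fiberwise rotation $F: SM \to SM$, $F(x,\xi) = (x,-\xi_{\perp})$, is a Liouville-measure-preserving isometry which commutes with the horizontal derivative, and since $(-\xi_{\perp})_{\perp} = \xi$ one gets the pointwise conjugation identity $F^*(\mH_{\perp} v) = \mH(F^* v)$. Hence $\int_{SM} \mH_{\perp} v \,d(SM) = \int_{SM} \mH(F^* v)\,d(SM)$, and the latter vanishes by Santal\'o's formula (the fundamental theorem of calculus along geodesics), because $F^* v$ vanishes near $\partial M$. In other words, the divergence-freeness of $\mH_{\perp}$ is \emph{equivalent to} that of the geodesic field $\mH$ via this rotation, and that equivalence is the actual content of the lemma's hard step. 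Your claim is true, but as written it is assumed rather than proved, and the one argument you do offer for it (fiber tangency) is incorrect; so the proposal has a genuine gap at its central step.
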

\begin{proof}
We claim that for any $v \in C^{\infty}(SM)$ one has 
\begin{equation} \label{hperp_integrationbyparts}
\int_{SM} (\mH_{\perp} f) v \,d(SM) = -\int_{SM} f(\mH_{\perp} v) \,d(SM).
\end{equation}
If this holds then we obtain by Santal{\'o}'s formula \cite[Appendix A.4]{DPSU} that 
\begin{align*}
 &\int_M (Sh)f \,dM = \frac{1}{2\pi} \int_{SM} (\mH_{\perp} h_{\psi}) f \,d(SM) = -\frac{1}{2\pi} \int_{SM} h_{\psi} (\mH_{\perp} f) \,d(SM) \\
 &= -\frac{1}{2\pi} \int_{\partial_+ S(M)} \int_0^{\tau(x,\xi)} h_{\psi}(\varphi_t(x,\xi)) \mH_{\perp} f(\varphi_t(x,\xi)) \mu \,dt \,d(\partial S(M)) \\
 &= -\frac{1}{2\pi} \int_{\partial_+ S(M)} h(x,\xi) I^0 \mH_{\perp} f(x,\xi) \mu \,d(\partial S(M))
\end{align*}
which is the required result.

To prove \eqref{hperp_integrationbyparts} it is enough to show that 
\begin{equation*}
\int_{SM} \mH_{\perp} v \,d(SM) = 0
\end{equation*}
for any $v \in C^{\infty}(SM)$ with $v = 0$ near $\partial M$. Using the isometry $F: SM \to SM$, $(x,\xi) \mapsto (x,-\xi_{\perp})$ and the invariance of $\nabla$, a change of variables and Santal{\'o}'s formula imply that 
\begin{align*}
 & \int_{SM} \mH_{\perp} v \,d(SM) = \int_{SM} (\xi_{\perp})^j \nabla_j v \,d(SM) \\
 &= \int_{SM} \xi^j F^* (\nabla_j v) \,d(SM) = \int_{SM} \xi^j \nabla_j (F^* v) \,d(SM) \\
 &= \int_{SM} \mH(F^* v) \,d(SM) \\
 &= \int_{\partial_+ S(M)} \int_0^{\tau(x,\xi)} \mH(F^* v)(\varphi_t(x,\xi)) \mu \,dt \,d(\partial S(M)).
\end{align*}
Since $\mH(F^* v)(\varphi_t(x,\xi)) = \frac{\partial}{\partial t} [(F^* v)(\varphi_t(x,\xi))]$ and $F^* v$ vanishes near $\partial M$, the last integral vanishes.
\end{proof}

Note that $\mH_{\perp} f(x,\xi) = (\xi_{\perp})^j \frac{\partial f}{\partial x_j}(x) = *df(\xi)$, where we write $\alpha(\xi) = \alpha_j \xi^j$ for a $1$-form $\alpha$ and tangent vector $\xi$, and $*$ is the Hodge star operator. Thus the formal adjoint of $S$ is given by  
\begin{equation*}
S^*: C^{\infty}_c(M^{\text{int}}) \to C^{\infty}_{\alpha}(\partial_+ S(M)), \ S^* f = -\frac{1}{2\pi} I^0(*df(\xi)).
\end{equation*}
The injectivity of $S^*$ follows immediately: if $S^* f \equiv 0$ then the ray transform of the solenoidal 1-form $*df$ vanishes, which implies that $f \equiv 0$ by \cite{An}. To prove surjectivity of $S$ it would then be enough to show that $S^*$ has closed range in proper spaces. The actual proof of the surjectivity will be slightly different, and we will proceed as in \cite[Theorem 4.3]{pestovuhlmann_imrn}.

\begin{proof}[Proof of Lemma \ref{lemma:s_surjective}]
We may assume that $(M,g)$ is embedded in a compact surface $(N,g)$ without boundary, and that there is a finite open cover $\{ U_1, \ldots, U_k \}$ of $N$ such that $M \subseteq U_1$, $M \cap \overline{U}_j = \emptyset$ for $j \geq 2$, and each $(\overline{U}_j,g)$ is simple. Let $\varphi_j \in C^{\infty}_c(U_j)$ be a partition of unity so that $\varphi_j \geq 0$, $\sum_{j=1}^k \varphi_j^2 = 1$ in $N$, and $\varphi_1 = 1$ near $M$. Let also $I_j$ be the geodesic ray transform (with zero attenuation) on $1$-forms in $(\closure{U}_j,g)$. Define the operator acting on smooth $1$-forms on $N$, 
\begin{equation*}
P: C^{\infty}(N,\Lambda^1) \to C^{\infty}(N,\Lambda^1), \ \ P \alpha = \sum_{j=1}^k \varphi_j I_j^* I_j (\varphi_j \alpha).
\end{equation*}
For the following details see \cite[Theorem 4.3]{pestovuhlmann_imrn}. The principal symbol of $P$ is given by the following expression, where $c_2$ is a constant, 
\begin{equation*}
\sigma(P)_i^j = c_2(\delta_i^j/\abs{\xi} - \xi^j \xi_i /\abs{\xi}^3).
\end{equation*}
If $-\Delta$ is the Laplace-Beltrami operator on $N$ mapping smooth functions to smooth functions whose integral over $N$ vanishes, define the operator 
\begin{equation*}
\Lambda:  C^{\infty}(N,\Lambda^1) \to C^{\infty}(N,\Lambda^1), \ \ \Lambda \alpha  = -c_2 d (-\Delta)^{-3/2} \delta \alpha.
\end{equation*}
Then $P + \Lambda$ is an elliptic pseudodifferential operator of order $-1$ acting on $1$-forms in $N$.

Consider the operator 
\begin{equation} \label{t_def}
T: C^{\infty}(N,\Lambda^1) \to C^{\infty}(M), \ \ T \alpha = r_M S_1 I_1 (\varphi_1 \alpha).
\end{equation}
Here $r_M$ is the restriction to $M$, and $S_1$ is the operator \eqref{s_def} in $(\closure{U}_1,g)$. We wish to show that $T$ is surjective. Assuming this we can easily finish the proof of the lemma: If $f \in C^{\infty}(M)$ is given, there is a smooth $1$-form $\alpha$ in $N$ with $r_M S_1 I_1 (\varphi_1 \alpha) = f$. Define 
\begin{equation*}
h(x,\xi) = (I_1 (\varphi_1 \alpha))_{\psi_1}(x,\xi), \qquad (x,\xi) \in \partial_+ S(M),
\end{equation*}
where $w_{\psi_1}$ is the map \eqref{w_def} in $(\closure{U}_1,g)$. Since $M$ is strictly contained in $U_1$ we have $h \in C^{\infty}_{\alpha}(\partial_+ S(M))$, and it follows that $Sh = r_M S_1 I_1 (\varphi_1 \alpha) = f$ as required.

It remains to prove surjectivity of \eqref{t_def}. In order to do this we express $T$ as the adjoint of an operator involving $P$. The dual of $C^{\infty}(M)$ may be identified with the set $\mDp_M(N) = \{ v \in \mDp(N) \,;\, \supp(v) \subseteq M \}$. Given $v \in \mDp_M(N)$ choose $v_j \in C^{\infty}(N)$ with $v_j \to v$ in $\mDp(N)$ and $\supp(v_j) \subseteq \{ \varphi_1 = 1 \}$. Then for $\alpha \in C^{\infty}(N,\Lambda^1)$, in the dual pairing in the indicated manifolds, by Lemma \ref{lemma:s_adjoint} we have 
\begin{align*}
(T\alpha, v)_M &= \lim \ (\varphi_1 S_1 I_1 (\varphi_1 \alpha), v_j)_{\closure{U}_1} \\
 &= -\frac{1}{2\pi} \lim \ (I_1(\varphi_1 \alpha), I_1(*d(\varphi_1 v_j)))_{L^2_{\mu}(\partial_+ S(\closure{U}_1))} \\
 &= -\frac{1}{2\pi} \lim \ (\alpha, P(*d v_j))_N = -\frac{1}{2\pi} (\alpha, P(*dv))_N
\end{align*}
since $P$ is continuous on $\mDp(N)$. Now $\Lambda(*dv) = 0$, so we consider 
\begin{equation*}
Q: \mDp(N) \to \mDp(N,\Lambda^1), \ \ Qv = -\frac{1}{2\pi} (P+\Lambda)(*dv).
\end{equation*}
It follows that $T = r_M Q^*$, since for $\alpha \in C^{\infty}(N,\Lambda^1)$ and $v \in C^{\infty}_c(M^{\text{int}})$ 
\begin{equation*}
(T\alpha,v)_M = (\alpha, Qv)_N = (Q^* \alpha, v)_N = (r_M Q^* \alpha,v)_M.
\end{equation*}
Thus $T$ in \eqref{t_def} is a continuous linear map between Fr\'echet spaces. Its adjoint is given by 
\begin{equation*}
T^* = Q|_{\mDp_M(N)}: \mDp_M(N) \to \mDp(N,\Lambda^1).
\end{equation*}
The operator $T^*$ is injective. To see this, let  $v \in \mDp_M(N)$ satisfy $T^* v = 0$. Then also $(P+\Lambda)(*dv) = 0$, showing that $dv$ is smooth by elliptic regularity for $P+\Lambda$. Thus $v$ is smooth and $\varphi_1 I_1^* I_1 (\varphi_1 * dv) = 0$. Consequently $I_1(\varphi_1 *dv) = 0$ in $\partial_+ S(\closure{U}_1)$, and the uniqueness result for $I_1$ \cite{An} implies that $*dv = dp$ near $M$ for some smooth function $p$. Since $v$ is smooth and supported in $M$, this is only possible if $v = 0$.

Using that $Q$ is an elliptic pseudodifferential operator, a standard argument (see for instance \cite[proof of Theorem 6.3.1]{DH2}) shows that the range of $T^*$ is weak* closed in $\mDp(N,\Lambda^1)$. It follows that $T$ is a continuous linear map between Fr\'echet spaces, and $T^*$ is injective with weak* closed range. Then $T$ is surjective by \cite[Theorem 37.2]{treves}, which concludes the proof.
\end{proof}

\section{Holomorphic solutions and uniqueness} \label{sec:uniqueness}

In this section we will prove the uniqueness result establishing injectivity of the attenuated ray transform. By using Proposition \ref{prop:holomorphic_integratingfactor}, the boundary problem 
\begin{equation*}
(\mH + a)u = -f \text{ in } SM, \quad u|_{\partial S(M)} = 0
\end{equation*}
is equivalent with the problem 
\begin{equation*}
\mH(e^{-w} u) = -e^{-w} f \text{ in } SM, \quad e^{-w} u|_{\partial S(M)} = 0.
\end{equation*}
Here $w$ is holomorphic, so also the right hand side $e^{-w} f$ is holomorphic.

The next main ingredient in the uniqueness proof is the following result, showing that in this situation also the solution $e^{-w} u$ is necessarily holomorphic.

\begin{prop} \label{prop:holomorphicsolution_functions}
Let $(M,g)$ be a 2D simple manifold, and let $\tilde{f}$ be a smooth function on $SM$ holomorphic (resp.~antiholomorphic) in the angular variable. Suppose that $v$ is a smooth function on $SM$ satisfying 
\begin{equation*}
\mH v = - \tilde{f} \text{ in $SM$}, \quad v|_{\partial S(M)} = 0.
\end{equation*}
Then $v$ is holomorphic (resp.~antiholomorphic) in the angular variable, and $v_0 = 0$.
\end{prop}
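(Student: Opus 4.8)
The plan is to show that the strictly antiholomorphic part of $v$ vanishes, and to read off $v_0 = 0$ along the way. First I would record the auxiliary function $u^* = (\id - iH)v$, which is antiholomorphic with $(u^*)_0 = v_0$ and $u^*|_{\partial S(M)} = 0$ (since $v|_{\partial S(M)} = 0$ and $H$ acts fiberwise). Using the operator identity $\mH(\id - iH) = (\id - iH)\mH + i[H,\mH]$, the commutator formula \eqref{commutator_formula}, and the holomorphicity of $\tilde f$ (so that $(\id - iH)\tilde f = \tilde f_0$), I would compute
\begin{equation*}
\mH u^* = -\tilde f_0 + i\mH_{\perp} v_0 + i(\mH_{\perp} v)_0.
\end{equation*}
Setting $\rho = u^* - v_0$, the assertion that $v$ is holomorphic is exactly $\rho = 0$, and $\rho$ solves $\mH\rho = -\tilde f_0 + i(\mH_{\perp} v)_0 + i\mH_{\perp} v_0 - \mH v_0$ with $\rho|_{\partial S(M)} = 0$.

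The key observation is that this source splits neatly under the even/odd decomposition. Its even (fiberwise constant) part is the function $-F_0$ with $F_0 := \tilde f_0 - i(\mH_{\perp} v)_0 \in C^{\infty}(M)$, while its odd part is $i\mH_{\perp} v_0 - \mH v_0 = (i*dv_0 - dv_0)(\xi) = -\beta(\xi)$, the evaluation on $\xi$ of the $1$-form $\beta := dv_0 - i*dv_0$ (recall $\mH v_0 = dv_0(\xi)$ and $\mH_{\perp} v_0 = *dv_0(\xi)$). Since $\mH$ reverses parity, I would split $\rho = \rho_+ + \rho_-$ and obtain two transport problems with vanishing boundary data, $\mH\rho_+ = -\beta(\xi)$ and $\mH\rho_- = -F_0$, with $\rho_\pm|_{\partial S(M)} = 0$. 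Because each $\rho_\pm$ vanishes on all of $\partial S(M)$, hence on the inflow boundary, uniqueness for the transport equation gives $\rho_+ = u^{\beta(\xi)}$ and $\rho_- = u^{F_0}$; restricting to $\partial_+ S(M)$ then yields $I^0\beta = 0$ and $I^0 F_0 = 0$.

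The heart of the matter — and the step I expect to be the main obstacle — is extracting $v_0 = 0$ from $I^0\beta = 0$. By injectivity of the geodesic ray transform on $1$-forms \cite{An}, $I^0\beta = 0$ forces $\beta = dp$ for some $p$ with $p|_{\partial M} = 0$. I would then invoke the solenoidal decomposition: since $v_0|_{\partial M} = 0$ and $*dv_0$ is coclosed ($\delta * dv_0 = 0$), the decomposition of $\beta = dv_0 - i*dv_0$ into solenoidal plus potential part is $\beta = (-i*dv_0) + dv_0$, with solenoidal component $-i*dv_0$. Comparing with the purely potential $\beta = dp$ and using uniqueness of the decomposition forces $*dv_0 = 0$, hence $dv_0 = 0$, hence $v_0 \equiv 0$ (as $v_0|_{\partial M} = 0$). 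This gives the asserted $v_0 = 0$, and also $\beta = 0$, so $\mH\rho_+ = 0$ with $\rho_+|_{\partial S(M)} = 0$, whence $\rho_+ = 0$. For the odd part, $I^0 F_0 = 0$ with $F_0$ a function forces $F_0 = 0$ by injectivity of $I^0$ on functions (Mukhometov \cite{Mu}), so $\mH\rho_- = 0$ and $\rho_- = 0$.

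Thus $\rho = \rho_+ + \rho_- = 0$, that is $(\id - iH)v = v_0$, which is precisely the statement that $v$ is holomorphic; together with $v_0 = 0$ this proves the proposition. The antiholomorphic case is handled identically after replacing $(\id - iH)$ by $(\id + iH)$ and reversing the corresponding signs. The only genuinely delicate points are the parity bookkeeping that cleanly separates the function source from the $1$-form source, and the solenoidal-decomposition argument that converts the vanishing of the ray transform of $\beta = dv_0 - i*dv_0$ into the vanishing of $v_0$ itself.
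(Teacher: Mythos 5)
Your proposal is correct and takes essentially the same route as the paper's proof: both apply the commutator formula \eqref{commutator_formula} to $(\id - iH)v$, observe that the resulting source is a function plus a $1$-form whose unattenuated ray transform vanishes because $(\id-iH)v$ vanishes on $\partial S(M)$, and then invoke injectivity of $I^0$ \cite{An}. The only difference is bookkeeping: the paper does not subtract $v_0$, so its $1$-form $\alpha = -i\,{*}dv_0$ is automatically solenoidal and the kernel characterization for pairs $h + \alpha_j\xi^j$ yields $h=0$, $\alpha=0$ at once, making your parity splitting and solenoidal-decomposition detour (which is correct, but longer) unnecessary.
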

\begin{proof}
We only consider the holomorphic case. It is enough to show that $(\id - iH) v = 0$. The commutator identity \eqref{commutator_formula} shows that 
\begin{align*}
\mH(\id - iH)v &= (\id-iH) \mH v +i[H,\mH] v \\
 &= -(\id-iH) \tilde{f} + i \mH_{\perp} v_0 + i(\mH_{\perp} v)_0.
\end{align*}
We have $(\id - iH)\tilde{f} = \tilde{f}_0$ since $\tilde{f}$ is holomorphic. Now 
\begin{equation} \label{vminus_holomorphic_eq}
\mH(\id - iH)v = -h - \alpha_j \xi^j
\end{equation}
where $h = \tilde{f}_0 - i(\mH_{\perp} v)_0$ is a function and $\alpha = *d (-i v_0)$ is a solenoidal $1$-form ($*$ is the Hodge star operator).

What is important here is that there is no $\xi$-dependence in $h$ and $\alpha$. In fact, since $(\id-iH)v$ vanishes on $\partial S(M)$, \eqref{vminus_holomorphic_eq} implies that the unattenuated ray transform of $h + \alpha_j \xi^j$ vanishes identically. Uniqueness in the unattenuated case \cite{An}, using that $\alpha$ is solenoidal, implies that $h = 0$ and $\alpha = 0$. We have proved that 
\begin{equation*}
\mH(\id - iH)v = 0,
\end{equation*}
and $(\id-iH)v = 0$ since $v|_{\partial S(M)} = 0$.
\end{proof}

It is now easy to give a proof of the injectivity result for the attenuated ray transform of functions.

\begin{proof}[Proof of Theorem \ref{thm:injectivity_functions}]
Let first $f \in C^{\infty}_c(M^{\text{int}})$, and assume that $I^a f \equiv 0$. Then the function 
\begin{equation*}
u(x,\xi) = \int_0^{\tau(x,\xi)} f(\gamma(t,x,\xi)) e^{\int_0^t a(\gamma(s,x,\xi)) \,ds} \,dt
\end{equation*}
is smooth in $SM$ and satisfies 
\begin{equation*}
(\mH + a)u = -f \ \text{ in } SM, \quad u|_{\partial S(M)} = 0.
\end{equation*}
By Proposition \ref{prop:holomorphic_integratingfactor} there is a holomorphic $w \in C^{\infty}(SM)$ and an antiholomorphic $\tilde{w} \in C^{\infty}(SM)$ such that 
\begin{align*}
\mH(e^{-w} u) &= -e^{-w} f, \\
\mH(e^{-\tilde{w}} u) &= -e^{-\tilde{w}} f.
\end{align*}

Now Proposition \ref{prop:holomorphicsolution_functions} shows that $e^{-w} u$ is holomorphic and $e^{-\tilde{w}} u$ is antiholomorphic. Multiplying by $e^w$ and $e^{\tilde{w}}$, it follows that $u$ itself is both holomorphic and antiholomorphic. This is only possible when $u$ does not depend on $\xi$, that is, $u \equiv u_0$. Now the transport equation reads 
\begin{equation*}
du_0(\xi) + au_0 = -f \ \text{ in } SM, \quad u_0|_{\partial S(M)} = 0.
\end{equation*}
Evaluating this at $\pm \xi$ and substracting the resulting expressions gives that $du_0 \equiv 0$, hence $u_0 \equiv 0$ by the boundary condition. Consequently $f \equiv 0$.

It remains to prove the result when $f \in C^{\infty}(M)$ may have support extending up to the boundary. In fact, this case can be reduced to the result for compactly supported functions by using the general principle that $(I^a)^* I^a$ is an elliptic pseudodifferential operator.

Suppose $f \in C^{\infty}(M)$ and $I^a f \equiv 0$. We consider more generally the weighted ray transform with weight $\rho \in C^{\infty}(SM)$, 
\begin{equation*}
I_{\rho} f(x,\xi) = \int_0^{\tau(x,\xi)} \rho(\varphi_t(x,\xi)) f(\gamma(t,x,\xi)) \,dt, \quad (x,\xi) \in \partial_+ S(M).
\end{equation*}
With the choice $\rho = e^{-u^a_-}$, we obtain $I_{\rho} f \equiv 0$. Let $(\tilde{M},g) \supset \supset (M,g)$ be another simple manifold which is so small that any $\tilde{M}$-geodesic starting at a point of $\partial_- S(M)$ never enters $M$ again. We extend $a$ to $\tilde{M}$ as a smooth function and $f$ by zero to $\tilde{M}$, and denote by $\tilde{I}_{\rho}$ the corresponding weighted ray transform in $\tilde{M}$.

It is easy to see that $\tilde{I}_{\rho} f(y,\eta) = 0$ for all $(y,\eta) \in \partial_+ S(\tilde{M})$, since either the geodesic starting from $(y,\eta)$ never touches $M$ or else $\tilde{I}_{\rho} f(y,\eta) = I_{\rho} f(x,\xi)$ for some $(x,\xi) \in \partial_+ S(M)$. By \cite[Proposition 2]{FSU}, since $\rho$ is nonvanishing, $\tilde{I}_{\rho}^* \tilde{I}_{\rho}$ is an elliptic pseudodifferential operator of order $-1$ in $\tilde{M}^{\text{int}}$. Now $\tilde{I}_{\rho}^* \tilde{I}_{\rho} f = 0$, and elliptic regularity shows that $f$ is smooth. Thus $f \in C^{\infty}_c(\tilde{M}^{\text{int}})$ and $\tilde{I}_{\rho} f \equiv 0$, showing that $\tilde{I}^a f \equiv 0$. The result above implies that $f \equiv 0$ as required.
\end{proof}

We proceed to the attenuated ray transform of $1$-forms. In this case the required generalization of Proposition \ref{prop:holomorphicsolution_functions} is as follows.

\begin{prop} \label{prop:holomorphicsolution_oneforms}
Let $(M,g)$ be a 2D simple manifold and let $\rho \in C^{\infty}(SM)$ be holomorphic (resp.~antiholomorphic) in the angular variable. Suppose $F(x,\xi) = f(x) + \alpha_j(x) \xi^j$ where $f$ is a smooth function and $\alpha$ is a smooth $1$-form on $M$. If $v \in C^{\infty}(SM)$ satisfies 
\begin{equation*}
\mH v = - \rho F \ \text{ in } SM, \quad v|_{\partial S(M)} = 0,
\end{equation*}
then $v$ is holomorphic (resp.~antiholomorphic) in the angular variable.
\end{prop}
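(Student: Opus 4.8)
The plan is to follow the template of the proof of Proposition \ref{prop:holomorphicsolution_functions}: apply $(\id - iH)$ to $v$, use the commutator formula \eqref{commutator_formula}, and reduce matters to the kernel description of the unattenuated ray transform on functions and $1$-forms. I treat the holomorphic case (the antiholomorphic case being identical with $(\id+iH)$ in place of $(\id-iH)$). It suffices to show that $(\id - iH)v$ depends on $x$ alone, since this is equivalent to $v$ being holomorphic. Using $\mH v = -\rho F$ and \eqref{commutator_formula}, the commutator identity gives
\begin{equation*}
\mH(\id - iH)v = (\id - iH)\mH v + i[H,\mH]v = -(\id - iH)(\rho F) + i\mH_{\perp}v_0 + i(\mH_{\perp}v)_0.
\end{equation*}

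The one genuinely new point, compared with the functions case, is the analysis of $(\id - iH)(\rho F)$. Since $\rho$ is holomorphic it has only nonnegative angular Fourier modes, while $F = f + \alpha_j\xi^j$ has angular modes only in $\{-1,0,1\}$; hence $\rho F$ has modes $\geq -1$, and therefore $(\id - iH)(\rho F)$ retains only modes $-1$ and $0$. Thus $(\id - iH)(\rho F) = P + R$, where $P$ is a function of $x$ (the mode $0$ part) and $R$ is an antiholomorphic term of angular mode $-1$. Substituting and grouping, the mode $0$ contributions $-P + i(\mH_{\perp}v)_0$ form a function $-h$, while $-R + i\mH_{\perp}v_0$ is a combination of modes $\pm 1$ (recall $i\mH_{\perp}v_0$ is the $1$-form $i\,*dv_0(\xi)$), hence equals $-\beta_j\xi^j$ for some $1$-form $\beta$. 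This yields
\begin{equation*}
\mH(\id - iH)v = -h - \beta_j\xi^j,
\end{equation*}
with $h$ a function and $\beta$ a $1$-form. The crucial difference from Proposition \ref{prop:holomorphicsolution_functions} is that here $\beta$ need not be solenoidal.

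Since $H$ acts fiberwise and $v|_{\partial S(M)} = 0$, also $(\id - iH)v$ vanishes on $\partial S(M)$, so the displayed equation shows that the unattenuated ray transform of $h + \beta_j\xi^j$ vanishes identically. I would then pass to the solenoidal decomposition $\beta = \beta^s + dp$ with $\delta\beta^s = 0$ and $p|_{\partial M} = 0$; since $dp(\xi) = \mH p$ has vanishing ray transform, the transform of $h + \beta^s_j\xi^j$ also vanishes, and the uniqueness result \cite{An} (with $\beta^s$ solenoidal) forces $h = 0$ and $\beta^s = 0$. Hence $\beta = dp$ and $\mH(\id - iH)v = -\mH p$, so $(\id - iH)v + p$ is constant along geodesics and vanishes on $\partial S(M)$, giving $(\id - iH)v = -p$. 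As $p = p(x)$, this shows $(\id - iH)v$ depends only on $x$, i.e.\ $v$ is holomorphic. The main obstacle is precisely the loss of the solenoidal property of $\beta$: this is what prevents the stronger conclusion $v_0 = 0$ of the functions case and forces the weaker (and correct) statement that $v$ is merely holomorphic.
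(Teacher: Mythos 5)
Your proof is correct and follows essentially the same route as the paper: project with $(\id - iH)$, apply the commutator formula \eqref{commutator_formula}, show that $(\id-iH)(\rho F)$ is a first-order polynomial in $\xi$ by angular Fourier mode counting, and then eliminate the resulting function-plus-one-form right-hand side using the kernel description of the unattenuated transform \cite{An}, concluding $(\id-iH)v = -p$ with $p = p(x)$. The only cosmetic differences are in bookkeeping: the paper splits $\rho = \rho_0 + (\rho - \rho_0)$ and computes $(\id-iH)(\rho_0 F)$ via a Hodge decomposition $\alpha = dp + *dq$, whereas you count the modes of $\rho F$ directly and reach the kernel statement through the solenoidal decomposition of $\beta$ rather than quoting it outright.
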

\begin{proof}
We only prove the holomorphic case. As in Proposition \ref{prop:holomorphicsolution_functions}, we study the function $(\id-iH)v$ and note that 
\begin{align}
\mH(\id - iH)v &= (\id-iH) \mH v +i[H,\mH] v \notag \\
 &= -(\id-iH) (\rho F) + i \mH_{\perp} v_0 + i(\mH_{\perp} v)_0. \label{antiholomorphic_computation_oneform}
\end{align}
We would like to show that $(\id-iH)(\rho F)$ is a first order polynomial in $\xi$.

The first step is to prove that 
\begin{equation} \label{hilbert_rho_rho_0}
(\id-iH)((\rho-\rho_0) F) = \tilde{h}
\end{equation}
for some $\tilde{h} = \tilde{h}(x)$. This can be reduced to elementary facts about Fourier coefficients. Fix a point $x$ in $M$, and choose an orientation preserving isometry $\Phi: S_x \to S^1$. If $H_0$ is the Hilbert transform on $S^1$, it is enough to prove the equivalent statement 
\begin{equation} \label{hilbert_s1_equivalent}
(\id-iH_0)((\Phi^{-1})^* (\rho-\rho_0)F) = \tilde{h}(x).
\end{equation}
But $(\id-iH_0)((\Phi^{-1})^* (\rho-\rho_0)) = 0$ since $\rho$ is holomorphic, meaning that one has the Fourier series 
\begin{equation*}
(\Phi^{-1})^* (\rho-\rho_0) = \sum_{k=1}^{\infty} a_k e^{ik\theta}.
\end{equation*}
Also, since $F$ is a first order polynomial in $\xi$, one has the Fourier series $(\Phi^{-1})^* F = b_0 + b_1 e^{i\theta} + b_{-1} e^{-i\theta}$. Multiplying these Fourier series gives \eqref{hilbert_s1_equivalent}.

By \eqref{antiholomorphic_computation_oneform} and \eqref{hilbert_rho_rho_0}, we obtain 
\begin{equation*}
\mH(\id - iH)v = -\rho_0(\id-iH)F - \tilde{F}
\end{equation*}
where $\tilde{F} = \tilde{h} - i(\mH_{\perp} v)_0 - i*dv_0(\xi)$ is a first order polynomial in $\xi$. Next we employ a Hodge decomposition $\alpha = dp + *dq$ where $p,q \in C^{\infty}(M)$. One has the antiholomorphic projections 
\begin{align*}
(\id-iH)(dp(\xi)) &= (\id-iH)\mH p = \mH(\id-iH)p -i[H,\mH]p \\
 &= (\mH - i \mH_{\perp})p
\end{align*}
and similarly 
\begin{align*}
(\id-iH)(*dq(\xi)) &= (\id-iH)\mH_{\perp} q = (\id-iH) [H,\mH] q = (\id-iH)H \mH q \\
 &= H \mH q + i (\mH q -  (\mH q)_0) = i(\id-iH)\mH q \\
 &= i(\mH - i \mH_{\perp})q,
\end{align*}
the last line using the above computation for $p$.

Putting these results together, we have proved that 
\begin{equation*}
\mH(\id-iH)v = -\hat{F} \,\text{ in } SM, \quad (\id-iH)v|_{\partial S(M)} = 0,
\end{equation*}
where $\hat{F}$ is the first order polynomial in $\xi$ given by 
\begin{equation*}
\hat{F} = \rho_0 f + \tilde{h} - i(\mH_{\perp} v)_0 + \rho_0(d-i*d)(p+iq)(\xi) - i*d v_0(\xi).
\end{equation*}
This shows that the geodesic ray transform of $\hat{F}$ vanishes. Therefore $\hat{F}(\xi) = d\hat{p}(\xi)$ for some $\hat{p} \in C^{\infty}(M)$ with $\hat{p}|_{\partial M} = 0$, and the equation for $(\id-iH)v$ implies that 
\begin{equation*}
(\id-iH)v = -\hat{p}.
\end{equation*}
This proves that $v$ is holomorphic.
\end{proof}

The injectivity result for $1$-forms is now proved in a similar way as the corresponding result for functions.

\begin{proof}[Proof of Theorem \ref{thm:injectivity_oneforms}]
Let $f$ be a smooth function and $\alpha$ a smooth $1$-form on $M$, and let $F = f + \alpha_j \xi^j$. Assume first that $f$ and $\alpha$ are compactly supported in $M^{\text{int}}$. Then the function 
\begin{equation*}
u(x,\xi) = \int_0^{\tau(x,\xi)} F(\varphi_t(x,\xi)) e^{\int_0^t a(\gamma(s,x,\xi)) \,ds} \,dt
\end{equation*}
is smooth in $SM$ and satisfies the equation 
\begin{equation*}
(\mH + a)u = -F \,\text{ in } SM, \quad u|_{\partial S(M)} = 0.
\end{equation*}
As before we use Proposition \ref{prop:holomorphic_integratingfactor} to find a holomorphic function $w$ and an antiholomorphic function $\tilde{w}$ such that 
\begin{align*}
\mH(e^{-w} u) &= -e^{-w} F, \\
\mH(e^{-\tilde{w}} u) &= -e^{-\tilde{w}} F.
\end{align*} 
Proposition \ref{prop:holomorphicsolution_oneforms} implies that $e^{-w} u$ is holomorphic and $e^{-\tilde{w}} u$ is antiholomorphic, and consequently $u$ is both holomorphic and antiholomorphic. Thus $u \equiv u_0$, and the transport equation can be written as 
\begin{equation*}
(du_0 + \alpha)(\xi) + au_0 = -f \,\text{ in } SM, \quad u_0|_{\partial S(M)} = 0.
\end{equation*}
Consequently $\alpha = -du_0$ and $f = -au_0$ which is the required result.

Again the general case where $f \in C^{\infty}(M)$ and $\alpha$ is a smooth $1$-form in $M$ may be reduced to the previous case by elliptic regularity. Assume that $I^a(f+\alpha_j \xi^j) \equiv 0$. If $\rho = e^{-u^a_-}$, this implies that $I_{\rho} (f + \alpha_j \xi^j) \equiv 0$ where 
\begin{equation*}
I_{\rho}F(x,\xi) = \int_0^{\tau(x,\xi)} \rho(\varphi_t(x,\xi)) F(\varphi_t(x,\xi)) \,dt.
\end{equation*}
Consider the solenoidal decomposition $\alpha = \alpha^s + dp$, where $\delta \alpha^s = 0$ and $-\Delta p = \delta \alpha$ with $p|_{\partial M} = 0$. An integration by parts shows that we have $I_{\rho}(f-ap + \alpha_j^s \xi^j) \equiv 0$.

Let $(\tilde{M},g) \supset \supset (M,g)$ be a simple manifold as in the proof of Theorem \ref{thm:injectivity_functions}, and extend $a$ smoothly to $\tilde{M}$ and $f, p, \alpha^s$ by zero to $\tilde{M}$. It follows that $\tilde{I}_{\rho} (f - ap + \alpha^s_j \xi^j) \equiv 0$. By \cite[Proposition 1]{HS} (where we make the choices $w = \rho$ and $\alpha = \rho$, so that the modified elliptic condition in \cite[Remark 1]{HS} is satisfied), $\tilde{I}_{\rho}^* \tilde{I}_{\rho}$ is a pseudodifferential operator of order $-1$ in $\tilde{M}^{\text{int}}$ which is elliptic in the sense that whenever $f', \alpha'$ are in $L^2(\tilde{M}^{\text{int}})$ and $\tilde{I}_{\rho}^* \tilde{I}_{\rho} (f' + \alpha_j' \xi^j) \equiv 0$ and $\delta \alpha' \equiv 0$, then $f'$ and $\alpha'$ are smooth. Thus, $f-ap$ and $\alpha^s$ are smooth and compactly supported in $\tilde{M}^{\text{int}}$ and $\tilde{I}_{\rho}(f-ap + \alpha^s_j \xi^j) \equiv 0$, showing that $f-ap = a\tilde{p}$ and $\alpha^s = d\tilde{p}$ for some smooth $\tilde{p}$ with $\tilde{p}|_{\partial \tilde{M}} = 0$. Since $\alpha^s$ is zero outside $M$ it follows that $\tilde{p}$ vanishes outside $M$, and one obtains $f = a(p+\tilde{p})$ and $\alpha = d(p+\tilde{p})$ in $M$ with $p+\tilde{p}$ vanishing on $\partial M$.
\end{proof}

Finally we give a proof of the stability result in the introduction.

\begin{proof}[Proof of Theorem \ref{thm:stability}]
We make the same preparations as in the end of proof of Theorem \ref{thm:injectivity_oneforms}. Thus, we consider the decomposition $\alpha = \alpha^s + dp$ with $p = G(\delta \alpha)$ in $M$, choose a slightly larger manifold $(\tilde{M},g)$, and extend $a$ to $\tilde{M}$ and $f$,$p$,$\alpha^s$ by zero to $\tilde{M}$. Let $\tilde{N} = \tilde{I}_{\rho}^* \tilde{I}_{\rho}$ with $\rho = e^{-u^a_-}$. Since the modified elliptic condition of \cite[Remark 1]{HS} is satisfied, \cite[Proposition 2]{HS} implies the estimate 
\begin{multline} \label{stability_estimate_first}
\norm{f-ap}_{L^2(M)} + \norm{\alpha^s}_{L^2(M)} \leq C(\norm{\tilde{N}(f-ap + \alpha^s_j \xi^j)}_{H^1(\tilde{M})} \\
 + \norm{f-ap}_{H^{-1}(\tilde{M})} + \norm{\alpha^s}_{H^{-1}(\tilde{M})}).
\end{multline}

Let $L^2_s(M) = \{ \phi+\beta_j \xi^j \,;\, \phi \in L^2(M), \beta \text{ is a $1$-form in $L^2(M)$ and $\delta \beta = 0$} \}$ be the space of solenoidal pairs, assumed to be extended by zero to $\tilde{M}$. As in the end of proof of Theorem \ref{thm:injectivity_oneforms}, one can use the ellipticity of $\tilde{N}$ and injectivity of $\tilde{I}_{\rho}$ to see that $\tilde{N}: L^2_s(M) \to H^1(\tilde{M})$ is a bounded injective operator. We can then use \cite[Lemma 1]{HS} with the choices $X = L^2_s(M)$, $Y = H^1(\tilde{M})$, and $Z = H^{-1}(\tilde{M})$ (the latter two being the natural Sobolev spaces for solenoidal pairs), and conclude from \eqref{stability_estimate_first} that 
\begin{equation*}
\norm{f-ap}_{L^2(M)} + \norm{\alpha^s}_{L^2(M)} \leq \tilde{C} \norm{\tilde{N}(f-ap + \alpha^s_j \xi^j)}_{H^1(\tilde{M})}.
\end{equation*}
The stability result follows by noting that $\tilde{N} (f+\alpha_j \xi^j) = \tilde{N}(f-ap + \alpha^s_j \xi^j)$ and by taking $M_1 = \tilde{M}$.
\end{proof}

\section{Reconstruction procedure} \label{sec:reconstruction}

Let $(M,g)$ be a simple 2D manifold and let $a$ be a smooth complex function on $M$. In this section we give a procedure for determining a smooth function $f$ in $M$ from the knowledge of $I^a f$.

There are two nontrivial parts in the procedure: computing the inverse of the unattenuated ray transform $I^0$ in $(M,g)$, and the construction of (anti)holomorphic integrating factors for the equation $(\mH+a)u = 0$. If $(M,g)$ has constant curvature these operations can be done explicitly (since the $W$ operator vanishes, see \cite{pestovuhlmann_imrn} and Corollary \ref{cor:integratingfactors}). Also, if $(M,g)$ is a small perturbation of a constant curvature manifold then the $W$ operator has small norm on $L^2(M)$ and these two operations can be expressed in terms of convergent Neumann series, see \cite{krishnan} and Corollary \ref{cor:integratingfactors} again. However, for general simple $(M,g)$ it is not clear how to carry out these operations in an explicit way.

For simplicity we will assume below that $f \in C^{\infty}_c(M^{\text{int}})$, since in this case all the functions will be smooth up to $\partial M$ and we do not need to worry about regularity issues. Theorem \ref{thm:reconstruction} immediately follows from the next result.

\begin{prop}
Under the stated assumptions, a function $f \in C^{\infty}_c(M^{\text{int}})$ can be determined from the knowledge of $I^a f$ using the following procedure:

\begin{enumerate}

\item[1.] 
Define a function $d$ on $\partial S(M)$ by 
\begin{equation*}
d(x,\xi) = \left\{ \begin{array}{cl} I^a f(x,\xi), & \quad (x,\xi) \in \partial_+ S(M), \\ 0, & \quad \text{otherwise}. \end{array} \right.
\end{equation*}

\item[2.] 
Find a holomorphic function $w$ and an antiholomorphic function $\tilde{w}$, both smooth odd functions on $SM$, such that $\mH w = \mH \tilde{w} = -a$.

\item[3.] 
Let $\beta = (\id-iH)(e^{-w} d)$ and $\tilde{\beta} = (\id+iH)(e^{-\tilde{w}} d)$ on $\partial S(M)$.

\item[4.] 
Let $v = \beta \circ \psi + u^{(I^0)^{-1}(A_-^* \beta)}$ and $\tilde{v} = \tilde{\beta} \circ \psi + u^{(I^0)^{-1}(A_-^* \tilde{\beta})}$ in $SM$, where $A_-^* \beta = \beta - \beta \circ \psi$ on $\partial_+ S(M)$ and $(I^0)^{-1}$ is the inverse of the geodesic ray transform in $(M,g)$, in the sense that 
\begin{equation*}
(I^0)^{-1} I^0 (\phi + \alpha_j \xi^j) = \phi + \alpha_j \xi^j
\end{equation*}
for a smooth function $\phi$ and a solenoidal $1$-form $\alpha$.

\item[5.] 
Define $\hat{m} = \frac{1}{4} (\id-iH)(e^w v) + \frac{1}{4} (\id+iH)(e^{\tilde{w}} \tilde{v})$ and $\hat{u} = \hat{m} - \hat{m}_0$.

\item[6.] 
Define $q = (d-\hat{u})_0 \circ \psi + (u^{(\mH \hat{u} + a \hat{u})_-})_0$, and let $u = q + \hat{u}$.

\item[7.] 
Let $f = -(\mH u + au)_0$.

\end{enumerate}

\end{prop}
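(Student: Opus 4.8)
The plan is to track the solution $u$ of the transport equation $(\mH+a)u=-f$ with $u|_{\partial_- S(M)}=0$. By definition of $d$ in step~1, we have $u|_{\partial S(M)}=d$, since $u$ vanishes on $\partial_- S(M)$ and equals $I^a f=d$ on $\partial_+ S(M)$. Step~2 is exactly Proposition~\ref{prop:holomorphic_integratingfactor}, and with $w$ holomorphic and $\tilde w$ antiholomorphic satisfying $\mH w=\mH\tilde w=-a$ the operator identities $\mH+a=e^w\mH e^{-w}=e^{\tilde w}\mH e^{-\tilde w}$ give $\mH(e^{-w}u)=-e^{-w}f$ and $\mH(e^{-\tilde w}u)=-e^{-\tilde w}f$, with holomorphic and antiholomorphic right hand sides respectively. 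I would then show that steps~3--7 successively reconstruct the antiholomorphic projection $(\id-iH)(e^{-w}u)$, the holomorphic projection $(\id+iH)(e^{-\tilde w}u)$, the fibrewise non-constant part $u-u_0$, the angular average $u_0$, and finally $f$ itself.

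The heart of the argument is that the functions $v,\tilde v$ of step~4 equal $(\id-iH)(e^{-w}u)$ and $(\id+iH)(e^{-\tilde w}u)$ throughout $SM$. Writing $V=(\id-iH)(e^{-w}u)$ and repeating the commutator computation from the proof of Proposition~\ref{prop:holomorphicsolution_functions}, formula \eqref{commutator_formula} together with $(\id-iH)(e^{-w}f)=(e^{-w}f)_0$ yields $\mH V=-(h+\alpha_j\xi^j)$, where $h=(e^{-w}f)_0-i(\mH_{\perp}(e^{-w}u))_0$ is a function and $\alpha=*d(-i(e^{-w}u)_0)$ is a solenoidal $1$-form. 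Since the source is of function-plus-solenoidal type, I would integrate $\mH V=-(h+\alpha_j\xi^j)$ along geodesics: for $(x,\xi)\in\partial_+ S(M)$ the fundamental theorem of calculus gives $I^0(h+\alpha_j\xi^j)(x,\xi)=V(x,\xi)-V(\psi(x,\xi))=\beta(x,\xi)-\beta\circ\psi(x,\xi)=A_-^*\beta(x,\xi)$, using that the fibrewise operator $\id-iH$ commutes with restriction so that $V|_{\partial S(M)}=(\id-iH)(e^{-w}d)=\beta$. Applying $(I^0)^{-1}$ identifies $h+\alpha_j\xi^j$, and then $\beta\circ\psi+u^{(I^0)^{-1}(A_-^*\beta)}$ solves the same transport equation as $V$ and agrees with $V$ on $\partial_- S(M)$ (where $\psi=\id$ and $u^{(\cdot)}=0$); their difference is constant along geodesics and vanishes on $\partial_- S(M)$, hence is zero, giving $v=V$, and the antiholomorphic case is identical. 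I expect this to be the main obstacle, since it is where the inversion $(I^0)^{-1}$ and the uniqueness result of \cite{An} for solenoidal $1$-forms enter, and where one must verify that the source is genuinely of function-plus-solenoidal-$1$-form type.

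For step~5 I would establish the purely fibrewise identity $\hat m=u-c$ for some $c=c(x)$, whence $\hat u=\hat m-\hat m_0=u-u_0$. Reducing to the circle via \eqref{hilbert_circle} and writing $\Pi_{<0},\Pi_{>0},\Pi_0$ for the Fourier projections, one has $\id-iH=2\Pi_{\le 0}-\Pi_0$ and $\id+iH=2\Pi_{\ge 0}-\Pi_0$. The key input is the one-sided multiplier identity $\Pi_{\le 0}(e^w\Pi_{\le 0}\phi)=\Pi_{\le 0}(e^w\phi)$, valid because $e^w$ is holomorphic, together with its antiholomorphic analogue. Expanding with $\phi=e^{-w}u$, $\tilde\phi=e^{-\tilde w}u$ and $u=e^w\phi=e^{\tilde w}\tilde\phi$, these collapse the expressions to $(\id-iH)(e^w v)=4\Pi_{<0}u+2u_0-(e^w)_0\phi_0$ and $(\id+iH)(e^{\tilde w}\tilde v)=4\Pi_{>0}u+2u_0-(e^{\tilde w})_0\tilde\phi_0$; summing and dividing by $4$ gives $\hat m=u-\tfrac14[(e^w)_0\phi_0+(e^{\tilde w})_0\tilde\phi_0]$, and the bracketed term depends on $x$ only, so it cancels upon subtracting $\hat m_0$.

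Finally I would treat steps~6--7 together. Taking the odd part in $\xi$ of $(\mH+a)(u_0+\hat u)=-f$ and using $\mH u_0=du_0(\xi)$ gives $\mH u_0=-(\mH\hat u+a\hat u)_-$, whose right hand side is now known. Integrating the exact $1$-form $du_0$ along geodesics shows $u^{(\mH\hat u+a\hat u)_-}=u_0-(d-\hat u)_0\circ\psi$, so that the $q$ of step~6 equals $u_0$ plus the fibrewise non-constant part $E=(d-\hat u)_0\circ\psi-((d-\hat u)_0\circ\psi)_0$ of $(d-\hat u)_0\circ\psi$. Thus $u=q+\hat u$ reproduces the true transport solution up to $E$, which satisfies $E_0=0$ and $(\mH E)_0=0$ (indeed $\mH E$ is a $1$-form). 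Step~7 then computes $-(\mH u+au)_0=-((\mH+a)u_{\mathrm{true}})_0-((\mH+a)E)_0=f_0-0=f$, the error term dropping out. I would close by noting that the standing assumption $f\in C^\infty_c(M^{\text{int}})$ guarantees smoothness of all intermediate objects, so no boundary regularity issues arise.
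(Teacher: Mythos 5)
Your proposal is correct, and its skeleton is the same as the paper's proof: you track the exact solution $u$ of $(\mH+a)u=-f$, $u|_{\partial_- S(M)}=0$, use Proposition \ref{prop:holomorphic_integratingfactor} for Step 2, identify $v=(\id-iH)(e^{-w}u)$ and $\tilde{v}=(\id+iH)(e^{-\tilde{w}}u)$ in Step 4 through the commutator computation of Proposition \ref{prop:holomorphicsolution_functions} together with inversion of $I^0$ on function-plus-solenoidal pairs, and then recover $u-u_0$, $u_0$, and $f$. Two execution differences are worth recording. In Step 5 you prove $\hat{m}=u-c(x)$ by explicit Fourier projections, using the one-sided multiplier identity $\Pi_{\le 0}(e^w\Pi_{\le 0}\phi)=\Pi_{\le 0}(e^w\phi)$ for holomorphic $e^w$; the paper instead writes $e^{-w}u=h+\frac{1}{2}v$ with $h$ holomorphic and uses that $e^w h$ is holomorphic, so that $(\id-iH)(e^w h)=(e^w h)_0$ -- this is the same fact in different clothing, and both computations (yours checks out) give $\hat{u}=u-u_0$. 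The substantive difference is in Steps 6--7. The paper's proof shows that $q=u_0$ solves $\mH q=-(\mH\hat{u}+a\hat{u})_-$ with boundary values $(d-\hat{u})_0|_{\partial M}$, and then asserts that Step 6 produces this $q$; but the Step 6 formula, read literally, places the angular average only on $u^{(\mH\hat{u}+a\hat{u})_-}$, and hence produces $u_0+E$, where $E$ is the fiberwise nonconstant part of $(d-\hat{u})_0\circ\psi$ -- generally nonzero, since different directions $\xi$ at $x$ exit at different boundary points (the clean identity is $u_0=(d-\hat{u})_0\circ\psi+u^{(\mH\hat{u}+a\hat{u})_-}$ with no average, or with the average applied to both terms). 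You take the printed formula at face value and show the discrepancy is harmless: $E_0=0$, and $(\mH E)_0=0$ because $\mH$ annihilates $(d-\hat{u})_0\circ\psi$ and sends functions of $x$ to functions odd in $\xi$, so Step 7 still returns exactly $f$. This error-cancellation argument is a genuine addition: it justifies the procedure exactly as stated in the theorem, whereas the paper's own proof implicitly relies on a corrected form of Step 6.
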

\begin{proof}
Let $u$ be the solution of $\mH u + au = -f$ in $SM$ with $u|_{\partial_- S(M)} = 0$, so that $u|_{\partial_+ S(M)} = I^a f$. If $w$ and $\tilde{w}$ are as described, then one has the two equations 
\begin{align*}
\mH(e^{-w} u) &= -e^{-w} f, \\
\mH(e^{-\tilde{w}} u) &= -e^{-\tilde{w}} f.
\end{align*}
The right hand side in the first equation is holomorphic in the angular variable. We will show that $v = (\id-iH)(e^{-w} u)$, the antiholomorphic part of the solution $e^{-w} u$, is determined by $I^a f$. In fact, the computation in Proposition \ref{prop:holomorphicsolution_functions} shows that 
\begin{equation*}
\mH v = -\phi - \alpha_j \xi^j
\end{equation*}
where $\phi = f - i (\mH_{\perp} (e^{-w} u))_0$ and $\alpha = *d(-i(e^{-w} u)_0)$. Here we used that $(e^{-w})_0 = 1$ since $w$ is odd. We have that $v - v \circ \psi|_{\partial_+ S(M)} = I^0 (\phi + \alpha_j \xi^j)$. Here $\alpha$ is solenoidal, so $I^0$ is invertible and 
\begin{equation*}
\phi + \alpha_j \xi^j = (I^0)^{-1}(v - v \circ \psi|_{\partial_+ S(M)}).
\end{equation*}
This proves that $v$ is the function given in Step 4 above. A similar argument shows that $\tilde{v} = (\id+iH)(e^{-\tilde{w}} u)$ is the other function in Step 4.

We have obtained two decompositions 
\begin{align*}
e^{-w} u &= h + \frac{1}{2} v, \\
e^{-\tilde{w}} u &= \tilde{h} + \frac{1}{2} \tilde{v}
\end{align*}
where $h$ is holomorphic, $\tilde{h}$ is antiholomorphic, and $v$ and $\tilde{v}$ can be determined from the attenuated ray transform of $f$. This results in two decompositions for the solution $u$, 
\begin{align*}
u &= e^w h + \frac{1}{2} e^w v, \\
u &= e^{\tilde{w}} \tilde{h} + \frac{1}{2} e^{\tilde{w}} \tilde{v},
\end{align*}
where again $e^w h$ is holomorphic, $e^{\tilde{w}} \tilde{h}$ is antiholomorphic, and $e^w v$ and $e^{\tilde{w}} \tilde{v}$ are known. This determines $u$ up to a term which is constant in $\xi$, which can be seen by writing 
\begin{equation*}
u = \frac{1}{2}(\id+iH) u + \frac{1}{2}(\id-iH) u = \frac{1}{2} (e^w h)_0 + \frac{1}{2} (e^{\tilde{w}} \tilde{h})_0 + \hat{m}
\end{equation*}
with $\hat{m}$ given in Step 5.

Write $u = q + \hat{u}$ where $\hat{u}_0 = 0$. Then $q = u_0$ and $\hat{u} = \hat{m} - \hat{m}_0$. To find the term $q$ we note that $q|_{\partial M} = u-\hat{u}|_{\partial M} = (d-\hat{u})_0|_{\partial M}$, where necessarily $u-\hat{u}$ is independent of $\xi$. Taking the odd part in the equation $\mH u + au = -f$ implies that 
\begin{equation*}
\mH q = - (\mH \hat{u} + a\hat{u})_-.
\end{equation*}
Therefore $q$ is given by the quantity in Step 6. We have determined the solution $u$ in $SM$ from the knowledge of $I^a f$. Now $f = -\mH u - au$, and taking averages proves the formula in Step 7.
\end{proof}


\begin{thebibliography}{1}

\bibitem{An} Yu.~E.~Anikonov, \textit{Some methods for the study of multidimensional inverse problems for differential equations}, Nauka Sibirsk. Otdel, Novosibirsk (1978).

\bibitem{abk}
E.~V.~Arbuzov, A.~L.~Bukhgeim, and S.~G.~Kazantsev, \emph{{Two-dimensional tomography problems and the theory of A-analytic functions}}, Siberian Advances in Mathematics \textbf{8} (1998), 1--20.

\bibitem{bal_fullpartial}
G.~Bal, \emph{{On the attenuated Radon transform with full and partial measurements}}, Inverse Problems \textbf{20} (2004), 399--418.

\bibitem{bal}
G.~Bal, \emph{{Ray transforms in hyperbolic geometry}}, J. Math. Pures Appl. \textbf{84} (2005), 1362--1392.

\bibitem{bomanstromberg}
J.~Boman and J.-O. Str{\"o}mberg, \emph{{Novikov's inversion formula for the
  attenuated Radon transform -- a new approach}}, J. Geom. Anal. \textbf{14}
  (2004), no.~2, 185--198.
  
\bibitem{DPSU}
N.~S.~Dairbekov, G.~P.~Paternain, P.~Stefanov, and G.~Uhlmann, \emph{{The boundary rigidity problem in the presence of a magnetic field}}, Adv. Math. \textbf{216} (2007), 535--609.

\bibitem{DKSaU}
D.~Dos Santos~Ferreira, C.~E. Kenig, M.~Salo, and G.~Uhlmann,
  \emph{{Limiting Carleman weights and anisotropic inverse problems}}, Invent. Math.  {\bf 178} (2009), 119-171.

\bibitem{DH2}
J.~J.~Duistermaat and L.~H\"ormander, \emph{{Fourier integral operators. II}}, Acta Math. {\bf 128} (1972), 183--269.

\bibitem{finch}
D.~V.~Finch, \emph{{The attenuated X-ray transform: recent developments}}, in Inside Out: Inverse Problems and Applications (edited by G.~Uhlmann), p. 47--66, MSRI Publications \textbf{47}, Cambridge University Press, Cambridge (2003).

\bibitem{FSU}
B.~Frigyik, P.~Stefanov, and G.~Uhlmann, \emph{{The X-ray transform for a generic family of curves and weights}}, J. Geom. Anal. \textbf{18} (2008), 89--108.

\bibitem{HS}
S.~Holman and P.~Stefanov, \emph{{The weighted Doppler transform}}, Inverse Probl. Imaging \textbf{4} (2010), 111--130.

\bibitem{kb}
S.~G.~Kazantsev and A.~A.~Bukhgeim, \emph{{Inversion of the scalar and vector attenuated X-ray transforms in a unit disc}}, J. Inverse Ill-Posed Probl. \textbf{15} (2007), 735--765.

\bibitem{KSaU}
C.~E.~Kenig, M.~Salo, and G.~Uhlmann, \emph{{Inverse problems for the anisotropic Maxwell equations}}, preprint, arXiv:0905.3275.

\bibitem{krishnan}
V.~Krishnan, \emph{{A generalization of inversion formulas of Pestov and Uhlmann}}, preprint, arXiv:0712.1296.

\bibitem{kuchment_survey}
P.~Kuchment, \emph{{Generalized transforms of Radon type and their applications}}, in The Radon Transform, Inverse Problems, and Tomography (edited by G. Olafsson and E. T. Quinto), p. 67--91, Proc. Symp. Appl. Math. \textbf{63}, AMS, Providence (2006).

\bibitem{Ma} S.~McDowall, \emph{Optical tomography on simple Riemannian surfaces}, Comm. PDE {\bf 30} (2005), 1379--1400.

\bibitem{Ma_survey} S.~McDowall, \emph{{Optical tomography for media with variable index of refraction}}, Cubo \textbf{11} (2009), 71--97.

\bibitem{Mi} R.~Michel, \emph{Sur la rigidit\'e impos\'ee par la
longueur des g\'eod\'esiques}, Invent. Math. \textbf{65} (1981), 71--83.

\bibitem{Mu} R.~G.~Mukhometov, 
\emph{The reconstruction problem of a two-dimensional Riemannian metric, and
integral geometry (Russian)}, Dokl. Akad. Nauk SSSR
\textbf{232} (1977), no. 1, 32--35.

\bibitem{natterer}
F.~Natterer, \emph{{Inversion of the attenuated Radon transform}}, Inverse Problems \textbf{17} (2001), 113–-119.

\bibitem{natterer_vectorial}
F.~Natterer, \emph{{Inverting the attenuated vectorial Radon transform}}, J. Inverse Ill-Posed Probl. \textbf{13} (2005), 93--101.

\bibitem{novikov}
R.~G.~Novikov, \emph{{An inversion formula for the attenuated X-ray transformation}}, Ark. Mat. \textbf{40} (2002), 145–-167.

\bibitem{pestovuhlmann_imrn}
L.~Pestov and G.~Uhlmann, \emph{{On characterization of the range and inversion
  formulas for the geodesic X-ray transform}}, Int. Math. Res. Not. (2004),
  no.~80, 4331--4347.

\bibitem{pestovuhlmann}
L.~Pestov and G.~Uhlmann, \emph{Two dimensional compact simple manifolds are boundary distance
  rigid}, Ann. of Math. \textbf{161} (2005), 1093--1110.

\bibitem{Sh} V.~Sharafutdinov, \textit{Integral geometry of tensor fields}, in \textit{Inverse and Ill-Posed Problems Series}, VSP, Utrecht, 1994.

\bibitem{Sh2} V.~Sharafutdinov, \textit{On emission tomography of inhomogeneous media}, SIAM J. Appl. Math., \textbf{55} (1995), 707--718.

\bibitem{Sh_deformation} V.~Sharafutdinov, \textit{Variations of Dirichlet-to-Neumann map
and deformation boundary rigidity of simple 2-manifolds}, J. Geom. Anal. \textbf{17} (2007), 147--187.

\bibitem{SU} P.~Stefanov and G.~Uhlmann, \emph{{Recent progress on the boundary rigidity problem}},
{Electr. Res. Announc. Amer. Math. Soc.} {\bf 11} (2005), 64--70.

\bibitem{treves}
F.~Treves, \emph{{Topological vector spaces, distributions and kernels}}. Academic Press, New York, 1967.

\end{thebibliography}
\providecommand{\bysame}{\leavevmode\hbox to3em{\hrulefill}\thinspace}
\providecommand{\href}[2]{#2}

\end{document}